\documentclass[pdflatex,sn-mathphys-num]{sn-jnl}

\usepackage{graphicx}%
\usepackage{multirow}%
\usepackage{amsmath,amssymb,amsfonts}%
\usepackage{amsthm}%
\usepackage{mathrsfs}%
\usepackage[title]{appendix}%
\usepackage{xcolor}%
\usepackage{textcomp}%
\usepackage{manyfoot}%
\usepackage{booktabs}%
\usepackage{algorithm}%
\usepackage{algorithmicx}%
\usepackage{algpseudocode}%
\usepackage{listings}%
\usepackage{csquotes}
\usepackage{todonotes}
\usepackage{commath}
\usepackage{mathtools}
\usepackage{bbm}
\usepackage{nicefrac}
\usepackage{subdepth}
\usepackage{bm}
\usepackage{siunitx}
\usepackage{subcaption} 
\usepackage{enumitem}
\usepackage{trimclip}
\usepackage{thmtools}
\usepackage[utf8]{luainputenc}

\theoremstyle{thmstyleone}%
\newtheorem{theorem}{Theorem}
\theoremstyle{thmstyletwo}%
\newtheorem{example}{Example}%
\newtheorem{remark}{Remark}%

\theoremstyle{thmstylethree}%
\newtheorem{corollary}{Corollary}%
\newtheorem{lemma}{Lemma}%
\renewcommand{\b}[1]{\mathbf{#1}}
\newcommand{\w}[1]{\widetilde{#1}}

\def\code#1{\texttt{#1}}

\newcommand{\bsigma}{\boldsymbol{\sigma}}

\renewcommand{\O}{\mathcal{O}}

\newcommand{\dd}{\mathrm{d}}

\newcommand{\qta}{\quad\text{and}\quad}

\newcommand{\tend}{t_\mathrm{end}}

\newcommand{\bui}{\b u^{(i)}}
\newcommand{\buj}{\b u^{(j)}}

\newcommand{\ui}{u^{(i)}}

\newcommand{\fnu}{\mathbf{f}^{[\nu]}}

\newcommand{\tn}{t_n}
\newcommand{\told}{t_\text{old}}
\newcommand{\tnew}{t_\text{new}}
\newcommand{\uold}{\b u_\text{old}}
\newcommand{\unew}{\b u_\text{new}}
\newcommand{\etaold}{\eta_\text{old}}
\newcommand{\etanew}{\eta_\text{new}}
\newcommand{\tgamman}{t_\gamma^n}
\newcommand{\ugamman}{\b u_\gamma^n}
\newcommand{\unplusgamma}{u^{n+\gamma}}
\newcommand{\bunplusgamma}{\b u^{n+\gamma}}
\newcommand{\etaugamman}{\eta(\ugamman)}

\usepackage{xparse}

\let\epsilon\varepsilon
\let\phi\varphi
\let\rho\varrho

\usepackage{xspace}

\newcommand{\eg}[0]{{e.g.\@}\xspace}
\newcommand{\ie}[0]{{i.e.\@}\xspace}

\renewcommand{\O}{\mathcal{O}}

\newcommand{\1}{\mathbbm{1}}

\providecommand\R{}
\renewcommand{\R}{\mathbb{R}}

\newcommand{\dt}{\Delta t}
\newcommand{\dx}{\Delta x}

\newcommand{\diag}{\operatorname{diag}}

\renewcommand{\Vec}[1]{\renewcommand*{\arraystretch}{1.2}\begin{pmatrix*}[r]#1\end{pmatrix*}}
\newcommand{\cVec}[1]{\renewcommand*{\arraystretch}{1.2}\begin{pmatrix*}#1\end{pmatrix*}}

\makeatletter
\newcommand*\dashline{\rotatebox[origin=c]{90}{$\dabar@\dabar@\dabar@$}}
\makeatother
\begin{document}

	\title[Positivity-Preserving Relaxation]{A Positivity-Preserving Relaxation Algorithm}

	\author*[1,3]{\fnm{Thomas} \sur{Izgin}}\email{izgin@mathematik.uni-kassel.de}
	\author[2]{\fnm{Hendrik} \sur{Ranocha}}\email{hendrik.ranocha@uni-mainz.de}
	\author[3]{\fnm{Chi-Wang} \sur{Shu}}\email{chi-wang\_shu@brown.edu}

	\affil*[1]{\orgdiv{Department of Mathematics}, \orgname{University of Kassel}, \orgaddress{\street{Heinrich-Plett-Str. 40}, \postcode{34132}, \city{Kassel}, \country{Germany}}}
	\affil[2]{\orgdiv{Institute of Mathematics}, \orgname{Johannes Gutenberg University Mainz}, \orgaddress{\street{Staudingerweg 9}, \postcode{55128}, \city{Mainz}, \country{Germany}}}
	\affil[3]{\orgdiv{Division of Applied Mathematics}, \orgaddress{\orgname{Brown University}, \city{Providence}, 
\state{Rhode Island} \postcode{02906}, \country{USA}}}


	\abstract{We combine Patankar-type methods with suitable relaxation procedures that are capable of ensuring correct dissipation or conservation of functionals such as entropy or energy while producing unconditionally positive and conservative approximations. To that end, we adapt the relaxation algorithm to enforce positivity by using either ideas from the dense output framework when a linear invariant must be preserved, or simply a geometric mean if the only constraint is positivity preservation. The latter merely requires the solution of a scalar nonlinear equation while former results in a coupled linear-nonlinear system of equations. We present sufficient conditions for the solvability of the respective equations. Several applications in the context of ordinary and partial differential equations are presented, and the theoretical findings are validated numerically.}

	\keywords{Positivity preservation, Relaxation methods, Entropy stability}


	\pacs[MSC Classification]{65M06, 65M08, 65M20,65M22}

	\maketitle

	\section{Introduction}

	We consider initial-value problems (IVPs)
	\begin{align}\label{eq:ivp}
		\b u'(t)  =\b f(\b u(t)), \quad  \b u(t_0) & = \b u^0\in \R^d,
	\end{align}
	either as classical ordinary differential equation (ODE) model on its own, or more
typically obtained after discretizing a partial differential equation (PDE) in space.
	We are interested in two types of structures of the IVP \eqref{eq:ivp}.
	First, many applications require positive solutions, i.e., $\b u(t)>\b 0$ for all $t\geq t_0$ if $\b u^0>\b 0$, where inequalities are understood component-wise.
	This occurs, for example, when modeling chemical reactions, population dynamics, or the density of fluids.
	Second, many problems are equipped with additional functionals of interest, such as Lyapunov functionals, energy, or entropy.
	We say that the IVP \eqref{eq:ivp} is \emph{dissipative} with respect to a smooth functional $\eta$, if $\eta'(\bf u) \bf f(\bf u) \le 0$, i.e.,
	\begin{equation*}
		\frac{\dd}{\dd t}\eta(\b u(t))\leq 0
	\end{equation*}
	for all solutions $\b u(t)$ of \eqref{eq:ivp}.
	Similarly, \eqref{eq:ivp} is \emph{conservative} with respect to $\eta$, if $\eta'(\bf u) \bf f(\bf u) = 0$, i.e.,
	\begin{equation*}
		\frac{\dd}{\dd t}\eta(\b u(t))= 0.
	\end{equation*}
	In addition to the (typically nonlinear) functional $\eta$, many problems also conserve additional linear invariants, such as mass or momentum, which we also want to preserve on the discrete level.

	When discretizing \eqref{eq:ivp} in time using a one-step method, we would like to preserve these properties, i.e., we would like to have an unconditionally positive method satisfying
	\begin{equation*}
		\b u^0 > \b 0 \implies \b u^n>\b 0 \quad \text{for all } n\geq 0.
	\end{equation*}
	For many positive ODEs/PDEs, avoiding negative approximations is critical; such artifacts can lead to qualitatively incorrect solutions or the total failure of the numerical method \cite{BBKS2007, sandu2001positive, STKB2005, SSPMPRK2}.
	Moreover, for dissipative problems, we would like to use a dissipative method that satisfies
	\begin{equation}\label{eq:discrete_entropy_dissipative}
		\eta(\b u^n)\leq \eta(\b u^{n-1})\leq\dotsc \leq \eta(\b u^0).
	\end{equation}
	Similarly, a conservative method applied to a conservative problem should satisfy
	\begin{equation}\label{eq:discrete_entropy_conservative}
		\eta(\b u^n)= \eta(\b u^{n-1})=\dotsc = \eta(\b u^0).
	\end{equation}

	For convex $\eta$, the implicit Euler method is a well-known example of an unconditionally positive and dissipative method.
	However, this analysis neglects possible positivity issues that can arise while solving the implicit equations as well as remaining errors of the nonlinear iterative solver.

	Concerning positivity, the implicit Euler method is essentially the best method one can use in the class of general linear methods, since any unconditionally positive method can be at most first-order accurate \cite{bolley1978conservation}.
	To address this challenge, several strategies have been proposed:
		\begin{enumerate}
		\item \textit{Clipping techniques}, which forcibly set negative values to zero, either result in a mass-shifting optimization problem or otherwise compromise conservation of linear invariants, and, to date, lack a proof of stability \cite{BIM2022}.
		\item \textit{Projection techniques} \cite{sandu2001positive,nusslein2021positivity} can be positive and conserve linear invariants, but they may result in step size constraints and/or reduced accuracy.
		\item \textit{Fully implicit, nonlinear methods} \cite{HR2020,ricchiuto2011habilitation} can enforce positivity but require costly iterative solvers, which may fail to converge (to a positive solution), and thus, still produce nonphysical results.
		\item \textit{Diagonally split Runge--Kutta (DSRK) methods} \cite{horvath_positivity_1998} can be unconditionally positive and with order higher than one. However, they are typically less accurate than the implicit Euler method in practice \cite{macdonald2007}.
		\item \textit{Adaptive methods} \cite{STKB2005} use root-finding procedures and adapt the time step size. This can be effective, but the resulting schemes are only conditionally positive.
		\item \textit{Strong stability preserving (SSP) methods} \cite{GKS2011} are positive if the explicit Euler method is positive under a certain time step restriction. However, only the implicit Euler method leads to unconditional positivity, and thus, all other SSP methods are only conditionally positive.
		\item \textit{Patankar-type methods} represent a family of explicit or linearly implicit yet nonlinear schemes, which are unconditionally positive and can preserve certain linear invariants \cite{Patankar1980,BDM2003,MCD2020,KM18,AKM2020}.
	\end{enumerate}
	In this work, we focus on Patankar-type schemes.
	The main idea behind them is to modify an existing time-stepping method by introducing nonlinear weights in such a way that the resulting numerical scheme becomes unconditionally positive.
	The primary challenge lies in designing these weights so that the modified scheme preserves the accuracy of the original (baseline) method.
	This nonlinear modification is achieved using the so-called \emph{Patankar-trick} \cite{Patankar1980}, which gives this family of methods its name.
	A notable example is the incorporation of modified Patankar (MP) weights into classical Runge–Kutta (RK) schemes, leading to the development of modified Patankar–Runge–Kutta (MPRK) methods \cite{BDM2003,KM18,KM18Order3}, which in addition to being unconditionally positive, are also conservative.
	Motivated by their strong numerical performance, the Patankar-trick has since been successfully extended to a variety of time integration frameworks, including SSP Runge–Kutta (SSPRK) methods \cite{SSPMPRK2,SSPMPRK3}, arbitrary high-order Deferred Correction (DeC) schemes \cite{MPDeC}, generalized BBKS methods \cite{AKM2020}, GeCo schemes \cite{MCD2020}, and linear multistep methods \cite{IMPV2025}.
	The resulting modified schemes all belong to the broader Patankar-type family, which can themselves be recast as non-standard additive Runge--Kutta (NSARK) methods, see \cite{NSARK, IzginThesis}.

	Concerning the preservation (conservation/dissipation) of functionals $\eta$, several results are available for linear schemes such as RK methods applied to linear problems \cite{tadmor2002semidiscrete,ranocha2018L2stability,sun2017stability,sun2019strong,achleitner2024necessary,tadmor2025stability,sun2022energy} and fully-implicit methods \cite{lefloch2002fully,friedrich2019entropy,burrage1979stability,burrage1980nonlinear,dahlby2011preserving}.
	There are also positive results on dissipative schemes if the problem is sufficiently dissipative \cite{higueras2005monotonicity,jungel2015entropy,jungel2017entropy}.
	In the general case including conservative problems, however, results are restrictive and include many negative results \cite{ranocha2021strong,ranocha2020energy}.
	Similarly to positivity preservation, postprocessing/projection methods can be used to enforce the desired conservation/dissipation properties of time integration methods \cite{Shampine1986,grimm2005geometric,calvo2006preservation,calvo2010projection,laburta2015numerical}.
	In this work, we focus on the relaxation approach \cite{ketcheson2019relaxation,ranocha2020relaxation,ranocha2020general}, which can be used to enforce conservation/dissipation of functionals while preserving all linear invariants.
	The basic idea of relaxation methods goes back to \cite{sanzserna1982explicit} and \cite[pp.~265--266]{dekker1984stability}.

	Thus, there are several studies and methods devoted to either positivity preservation or the preservation of functionals such as entropy, but to the best of our knowledge, there is no high-order method that can guarantee both properties simultaneously.
	The main contribution of this work is to design a modified relaxation algorithm capable of simultaneously preserving positivity and conservation/dissipation of functionals.
	To that end, we first equip unconditionally positivity-preserving NSARK methods with suitable estimates for dissipative entropies by applying the relaxation framework from \cite{ranocha2020general}.
	While relaxation can be rendered positivity-preserving for dissipative problems with minor adjustments (see Remark~\ref{rem:pos_relax_diss}), entropy-conservative problems require more sophisticated treatment.
	Leveraging dense output formulae for MPRK methods \cite{izgin2024}, we propose a modified relaxation step that ensures unconditional positivity.
	Furthermore, we introduce a bootstrapping technique to achieve arbitrarily high-order accuracy in time for MPRK schemes.

	The remainder of the paper is structured as follows. We recall the relaxation technique from \cite{ranocha2020general} in Section~\ref{sec:relax}. In Section~\ref{sec:NSARK} we give a brief introduction to NSARK methods.  After that, we explain in Section~\ref{sec:dissipative} how to apply the relaxation algorithm for NSARK schemes and entropy dissipative problems. The main result is given for the entropy-conservative case, see Section~\ref{sec:cons_entropy}, where we equip different families of MP schemes with a positivity-preserving relaxation algorithm and present a bootstrapping technique to obtain arbitrary high order (in time) for MPRK schemes. Finally, we present several examples of ordinary and partial differential equations and validate our findings for second- and third-order MP schemes.

	\section{Preliminaries}\label{sec:preliminaries}

	In this section, we briefly review relaxation methods to preserve functionals $\eta$ and non-standard additive Runge--Kutta (NSARK) methods, which includes Patankar-type methods as a special case.

	\subsection{Classical Relaxation}\label{sec:relax}
	One way to guarantee dissipation \eqref{eq:discrete_entropy_dissipative} or conservation \eqref{eq:discrete_entropy_conservative} of functionals $\eta$ is the relaxation procedure explained in \cite{ranocha2020general}. We are given a numerical one-step method of order $p\geq 2$ generating approximations $\b u^n$ to $\b u(t_n)$ with a time step size of $\dt$. We then have to repeat the following steps, starting with $n=0$.
	\begin{enumerate}
		\item Define the quantities $(\told,\uold,\etaold)\coloneqq(t_n,\b u^n, \eta(\b u^n))$ as well as $(\tnew,\unew)\coloneqq(t_{n+1},\b u^{n+1})$.
		\item
		\begin{itemize}
			\item For dissipative problems \eqref{eq:ivp} compute a suitable estimate \[\etanew=\eta(\unew)+\O(\dt^{p+1}), \quad \dt\to 0.\]
			\item For conservative problems we can simply set $\etanew\coloneqq\etaold$, since we arrive at $\etanew=\eta(\uold)=\eta(\b u(t_{n+1}))=\eta(\unew)+\O(\dt^{p+1})$ by means of an induction over $n$.
		\end{itemize}
		\item Solve the system
		\begin{equation}\label{eq:System_Relaxation_step}
			\cVec{\tgamman\\
				\ugamman\\
				\etaugamman} = \Vec{\told\\ \uold\\ \etaold}+ \gamma \Vec{\tnew -\told\\ \unew - \uold \\ \etanew- \etaold}
		\end{equation}
		by inserting $\ugamman$ into the last equation and solving for $\gamma\approx 1$, and then computing $\tgamman$ and $\ugamman$ according to the remaining equations.
		\item Proceed with the numerical scheme using $\tgamman$ and $\ugamman$ instead of $t_{n+1}$ and $\b u^{n+1}$.
	\end{enumerate}
	For dissipative problems, the \enquote{suitable estimate $\etanew$} must guarantee the discrete dissipativity \eqref{eq:discrete_entropy_dissipative} for the approximations from the relaxation procedure. We will introduce such a suitable estimate for NSARK methods that are based on ARK methods with a non-negative extended Butcher tableau in Section~\ref{sec:dissipative}. For now, let us proceed by revisiting the main results from \cite{ranocha2020general}, assuming we have such an $\etanew$ at hand.
	\begin{theorem}[{\cite[Theorem~2.13, Theorem~2.14]{ranocha2020general}}]\label{thm:relax}
		Consider the relaxation procedure \eqref{eq:System_Relaxation_step} with a numerical method of order $p$ and $\dt>0$ sufficiently small. If
		\begin{subequations}\label{eq:cond_eta}
			\begin{equation}\label{eq:cond_eta_conv}
				\eta \text{ is convex and } \eta''(\uold)(\b f(\uold),\b f(\uold))\neq 0\quad \text{ or }
			\end{equation}
			\begin{equation}\label{eq:cond_eta_degen}
				\eta'(\unew)\frac{\unew-\uold}{\Vert \unew-\uold\Vert}=c(\uold)\dt+\O(\dt^2) \text{ with } c\neq 0,
			\end{equation}
		\end{subequations}
		then there exists a unique $\gamma=1+\O(\dt^{p-1})$ that satisfies \eqref{eq:System_Relaxation_step}. Additionally, the relaxation method is of order $p$, that is $\ugamman=\b u(\tgamman)+\O(\dt^{p+1})$.
		In particular, there exist $\gamma_1,\gamma_2>0$ such that
		\begin{equation}\label{eq:r}
			r(\gamma)\coloneqq\eta(\uold +\gamma (\unew-\uold))-\left(\etaold +\gamma(\etanew-\etaold)\right)
		\end{equation}
		satisfies $r(\gamma_1)r(\gamma_2)<0$ for $\dt>0$ small enough.
	\end{theorem}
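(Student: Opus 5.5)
The plan is to study the scalar function $r$ from \eqref{eq:r} near $\gamma=1$ and to use the intermediate value theorem plus a monotonicity argument. Set $\b d\coloneqq \unew-\uold$; since the method has order $p$ and $\b f(\uold)\neq \b 0$ in the relevant cases, $\b d=\dt\,\b f(\uold)+\O(\dt^2)$. First record the two basic facts. The relation $r(0)=0$ holds trivially. Next, $r(1)=\eta(\unew)-\etanew=\O(\dt^{p+1})$, by the defining property of $\etanew$ in the dissipative case and by the induction argument in the conservative case.

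For condition \eqref{eq:cond_eta_conv}, Taylor-expand $r$ around $\gamma=0$:
\begin{equation*}
r(\gamma)=\gamma\bigl(\eta'(\uold)\b d-(\etanew-\etaold)\bigr)+\tfrac{\gamma^2}{2}\eta''(\uold)(\b d,\b d)+\O(\gamma^3\dt^3).
\end{equation*}
Using $r(1)=\O(\dt^{p+1})$, the linear coefficient equals $-\tfrac12\eta''(\uold)(\b d,\b d)+\O(\dt^3)$. Hence
\begin{equation*}
r(\gamma)=\tfrac{\dt^2}{2}\,\eta''(\uold)(\b f,\b f)\,\gamma(\gamma-1)+\O(\dt^3)
\end{equation*}
uniformly for $\gamma$ in a compact interval. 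Because $\eta''(\uold)(\b f,\b f)\neq0$, choosing $\gamma_1=1/2$ and $\gamma_2=3/2$ gives $r(\gamma_1)r(\gamma_2)<0$ for small $\dt$. This yields a root in $(\gamma_1,\gamma_2)$, distinct from the trivial root $0$.

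To locate the root more precisely, I would write $r(\gamma)=\gamma\,s(\gamma)$. Then $s$ is smooth with $s(1)=\O(\dt^{p+1})$ and $s'(\gamma)=\tfrac{\dt^2}{2}\eta''(\uold)(\b f,\b f)+\O(\dt^3)$. Therefore $s$ is strictly monotone on $[\gamma_1,\gamma_2]$, and the implicit function theorem (or Newton/mean value estimates) gives a unique root with $\gamma-1=s(1)/s'(\xi)=\O(\dt^{p+1}/\dt^2)=\O(\dt^{p-1})$.

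For condition \eqref{eq:cond_eta_degen}, the argument is analogous but uses first-order information at $\gamma=1$:
\begin{equation*}
r'(1)=\eta'(\unew)\b d-(\etanew-\etaold).
\end{equation*}
Here $\etanew-\etaold$ is determined, up to the error, by the same quantity. I would compare the two terms through $r(0)=0$ and $r(1)\approx 0$ to get $r'(1)=c\,\Vert\b d\Vert\dt+\O(\Vert \b d\Vert\dt^2)$. This gives a sign change at $1\pm\delta$ and $\gamma-1=\O(\dt^{p+1}/\dt^2)$.

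Finally, the order claim follows as in \cite{ranocha2020general}. Because $\ugamman$ lies on the secant through $\uold$ and $\unew$, with $\tgamman$ on the matching time secant, one has
\begin{equation*}
\ugamman-\b u(\tgamman)=(\gamma-1)\bigl[\unew-\uold-(\b u(\tnew)-\b u(\told))\bigr]+\O(\dt^{p+1})+\O\bigl((\gamma-1)^2\dt^2\bigr).
\end{equation*}
The bracket is $\O(\dt^{p+1})$, and $(\gamma-1)^2\dt^2$ is also small enough. The main obstacle is the uniform control of remainders so that the $\O(\dt^2)$ leading term dominates. In the degenerate case this is sharper, since the leading term is only first order in $\gamma-1$, and I would handle it carefully via the exact identity $r(1)-r(0)=\int_0^1 r'$.
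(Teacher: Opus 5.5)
Your proof is correct in substance, but for the degenerate condition \eqref{eq:cond_eta_degen} it takes a different route from the paper. The paper gives no proof beyond citing \cite{ranocha2020general}. The argument it has in mind is visible in its proof of Theorem~\ref{thm:pos_relax_1}, which follows \cite{CLM2010}:
write $\ugamman=\unew+\lambda\b w$ with $\b w=\b d/\Vert\b d\Vert$ and $\lambda=(\gamma-1)\Vert\b d\Vert=\dt\mu$;
expand $z(\dt,\mu)=\dt^{-2}\bigl(\eta(\unew+\dt\mu\b w)-\etaold\bigr)=\mu c+\tfrac{\mu^2}{2}\b w^TH_\eta(\unew)\b w+\O(\dt)$;
observe $z(\dt,0)=\O(\dt^{p-1})$ and $\partial_\mu z(0,0)=c\neq0$;
and apply the implicit function theorem.

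You do the same thing by hand. You Taylor-expand $r$ about $\gamma=1$ using $r'(1)\approx c\Vert\b d\Vert\dt$ and $r''=\O(\Vert\b d\Vert^2)$, then use monotonicity and the intermediate value theorem on $[1-\delta,1+\delta]$. Your fixed $\delta$ corresponds to $\mu\approx\pm\delta\Vert\b f(\uold)\Vert$. Your version is more elementary and produces the bracketing pair $\gamma_1,\gamma_2$ explicitly. The implicit-function version gives a smooth branch $\mu(\dt)$, and it is what carries over to the paper's dense-output update, whose direction depends on $\gamma$. For \eqref{eq:cond_eta_conv}, your factorisation $r=\gamma s$ with $s'=\tfrac{\dt^2}{2}\eta''(\uold)(\b f,\b f)+\O(\dt^3)$ is clean, and it uses only $\eta''(\uold)(\b f,\b f)\neq0$, not convexity.

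Two steps need correcting, though neither breaks the proof.

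First, the way you propose to obtain $r'(1)$ under \eqref{eq:cond_eta_degen} does not bring in $c$ at all. Comparing $r(0)=0$ and $r(1)=\O(\dt^{p+1})$ via $r(1)-r(0)=\int_0^1r'$ gives $r'(1)=\int_0^1\sigma\,r''(\sigma)\,\dd\sigma+\O(\dt^{p+1})=\tfrac12\eta''(\uold)(\b d,\b d)+\O(\dt^3)$. The actual reason is structural. With $\b f(\uold)\neq\b 0$, condition \eqref{eq:cond_eta_degen} forces $(\eta'\b f)(\uold)=0$, so it is tailored to the conservative case. There $\etanew=\etaold$, hence $r'(1)=\eta'(\unew)\b d$ exactly, and \eqref{eq:cond_eta_degen} gives $r'(1)=\Vert\b d\Vert\bigl(c\dt+\O(\dt^2)\bigr)$ immediately. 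For a dissipative problem the condition can only hold where $\eta'\b f$ attains its maximum $0$; there $\etanew-\etaold=\O(\dt^3)$, which leads to the same conclusion. Also, $\Vert\b d\Vert\geq C\dt$ is not implied by \eqref{eq:cond_eta_degen} and must be assumed; otherwise $\delta\,r'(1)$ need not dominate $r(1)$.

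Second, in the order argument the secant interpolation error is $\b u(\told+\gamma\dt)-\uold-\gamma\bigl(\b u(\tnew)-\uold\bigr)=\tfrac12\gamma(\gamma-1)\dt^2\b u''(\told)+\O(\abs{\gamma-1}\dt^3)$. This is linear, not quadratic, in $\gamma-1$. Thus $\ugamman-\b u(\tgamman)=\gamma\bigl(\unew-\b u(\tnew)\bigr)+\O(\abs{\gamma-1}\dt^2)$, and it is exactly the rate $\gamma-1=\O(\dt^{p-1})$ that makes the last term $\O(\dt^{p+1})$. Your $\O((\gamma-1)^2\dt^2)$ would wrongly suggest that $\gamma-1=\O(\dt^{(p-1)/2})$ suffices.
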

	This theorem is the theoretical basis for the existence and uniqueness of the solution of the relaxation procedure \eqref{eq:System_Relaxation_step}. Unfortunately, the theorem does not give bounds on $\dt$ for the existence of the solution, so that computations may be rejected due to $\dt$ being too large.

	\begin{remark}[Issue with positivity]\label{rem:issue_pos_relax}
		The main issue of positivity-preservation with the above relaxation algorithm is that the update
		\[\b u^n_\gamma=\b u^n+\gamma(\b u^{n+1}-\b u^n)= \gamma\b u^{n+1}+(1-\gamma)\b u^n\]
		is not necessarily positivity-preserving for $\gamma>1$, even if the baseline method is positive.
	\end{remark}
	Nevertheless, to overcome this issue, we propose to use \textit{unconditionally positive}\footnote{That is, $\b u^n>\b 0$ component-wise implies $\b u^{n+1}>\b 0$ for all $\dt>0$.} time integrators. In the upcoming sections, we introduce the methods of interest for this work, all of which may be recast as so-called non-standard additive Runge--Kutta schemes.
	\subsection{Non-standard Additive Runge--Kutta Methods}\label{sec:NSARK}
	Non-standard additive Runge--Kutta methods (NSARK) methods are applied to an IVP \eqref{eq:ivp}, where the right-hand side is split into a sum, that is
	\begin{align}\label{eq:ivp_NSARK}
		\b u'(t)  =\b f(\b u(t))= \sum_{\substack{\nu=1}}^N  \fnu(\b u(t)), \quad  \b u(t_0) & = \b u^0\in \R^d.
	\end{align}
	Already for traditional additive Runge--Kutta (ARK) methods, including Implicit-Explicit (IMEX) Runge--Kutta (RK) methods \cite{Crouzeix1980,ARS1997}, the main idea is to apply very
	different RK schemes determined by  $\b A^{[\nu]}=(a_{ij}^{[\nu]})_{i,j=1,\dotsc,s}$, $\b b^{[\nu]}=(b_1^{[\nu]},\dotsc, b_s^{[\nu]})$, $\b c^{[\nu]}=(c_1^{[\nu]},\dotsc, c_s^{[\nu]})^T$ to the different addends $\fnu$. For internal consistency, we require that the different RK schemes actually do not differ in the abscissa, \ie
	\begin{equation}
		c_i=c_i^{[\nu]}=\sum_{j=1}^sa_{ij}^{[\nu]}\label{eq:sum_aij_ARK}
	\end{equation}
	for $i=1,\dotsc,s$ and $\nu=1,\dotsc, N$, see \cite{SG2015}. However, for autonomous IVPs \eqref{eq:ivp_NSARK}, this has no effect on the resulting ARK method, which in this case reads
	\begin{equation}\label{eq:ark}
		\begin{aligned}
			\bui & = \b u^n + \dt \sum_{j=1}^s  \sum_{\substack{\nu=1}}^N a^{[\nu]}_{ij}\fnu(\buj), \quad i=1,\dotsc,s,\\
			\b u^{n+1} & = \b u^n + \dt \sum_{j=1}^s \sum_{\substack{\nu=1}}^N  b^{[\nu]}_j\fnu(\buj),
		\end{aligned}
	\end{equation}  and the corresponding extended Butcher tableau is given by
	\[\arraycolsep=1.4pt\def\arraystretch{1.5}
	\begin{array}{c|c|c|c|c}
		\b c &	\b A^{[1]}        &    \b A^{[2]} & \cdots & \b A^{[N]}\\ \hline
		&	\b b^{[1]}        &    \b b^{[2]} & \cdots & \b b^{[N]}
	\end{array}
	\]
	with $\b c=(c_1,\dotsc,c_s)^T$.

	NSARK methods now differ from ARK schemes \eqref{eq:ark} in that their extended Butcher tableau is allowed to also depend on the step size and the solution. In particular, NSARK methods applied to \eqref{eq:ivp_NSARK} are of the form
	\begin{equation}\label{eq:nsark}
		\begin{aligned}
			\bui & = \b u^n + \dt \sum_{j=1}^s  \sum_{\substack{\nu=1}}^N a^{[\nu]}_{ij}(\b U^n,\tn,\dt)  \fnu(\buj), \quad i=1,\dotsc,s,\\
			\b u^{n+1} & = \b u^n + \dt \sum_{j=1}^s \sum_{\substack{\nu=1}}^N b^{[\nu]}_j(\b U^n,\tn,\dt) \fnu(\buj),
		\end{aligned}
	\end{equation}
	where $\b U^n=(\b u^n \dashline\b u^{(1)}\dashline\dotsc\dashline\b u^{(s)}\dashline \b u^{n+1})\in \R^{d\times s+2}$.

	In the case of gBBKS \cite{AKM2020}, Geometric Conservative (GeCo) \cite{MCD2020}, both of which may be interpreted as NSRK schemes, as well as modified Patankar--Runge--Kutta (MPRK) \cite{KM18,KM18Order3} methods, the same RK scheme is used for the treatment of the different addends in \eqref{eq:ivp_NSARK} and only the solution-dependent terms vary. For MP strong-stability-preserving RK (MPSSPRK) schemes, the situation is different, see Section~\ref{sec:MPSSPRK}. In this work we focus on modified Patankar (MP) schemes in the entropy-conservative case and leave gBBKS and GeCo methods for future works.
	\subsubsection{Production-Destruction-Rest Systems}
	The application of modified Patankar (MP) schemes is restricted to production-destruction-rest (PDRS) systems
	\[u_k'(t)=r^P_k(\b u(t))-r^D_k(\b u(t)) + \sum_{\nu=1}^{d}(p_{k\nu}(\b u(t))-d_{k\nu}(\b u(t))),\quad k=1,\dotsc,d\]
	with $p_{k\nu}=d_{\nu k}$ and $r^P_k,r^D_k,p_{k\nu},d_{k\nu}\geq 0$ on $\R^d_{>0}$. We note that this is only a formal restriction since every autonomous system with real-valued right-hand sides can be rewritten as such a PDRS \cite{IR2023}. Now, one can recover the function $\b f^{[\nu]}$ in \eqref{eq:ivp_NSARK} and specify the solution-dependent Butcher coefficients. Indeed, according to \cite[Remark~2.25]{IzginThesis}, a PDRS can be written in terms of \eqref{eq:ivp_NSARK} by using the convention $p_{kk}=d_{kk}=0$ and choosing $N=d+1$ as well as
	\begin{equation}\label{eq:RHS_MPRK}
		\begin{aligned}
			\b f^{[N]}(\b u(t))&=(r_1^P(\b u(t)),\dotsc,r_d^P(\b u(t)))^T\\
			f^{[\nu]}_k(\b u(t))&=\begin{cases}p_{k\nu}(\b u(t)), &k\neq \nu, \\-\left(r_k^D(\b u(t)) + \sum_{\mu=1}^d d_{k\mu}(\b u(t))\right), &k=\nu\end{cases}
		\end{aligned}
	\end{equation}
	for $k,\nu =1,\dotsc,d$.
	\subsubsection{Modified Patankar--Runge--Kutta Schemes}
	With \eqref{eq:RHS_MPRK}, every MPRK scheme \cite{BDM2003,KM18,KM18Order3} that is based on a single explicit RK method with a non-negative Butcher array can be expressed in terms of an NSARK scheme using

	\begin{equation}\label{eq:NSweights}
		\begin{aligned}
			a^{[\nu]}_{ij}(\b U^n,\tn,\dt)&=a_{ij}\gamma_\nu^{[i]}(\b U^n,\tn,\dt),\\
			b^{[\nu]}_j(\b U^n,\tn,\dt)&=b_j\delta_\nu(\b U^n,\tn,\dt),
		\end{aligned}
	\end{equation}
	where
	\begin{equation}\label{eq:NSW_MPRK}
		\begin{aligned}
			\gamma_\nu^{[i]}(\b U^n,\tn,\dt)&=\begin{cases}
				\frac{\ui_\nu}{\pi_{\nu}^{(i)}(\b u^n,\b u^{(1)},\dotsc,\b u^{(i-1)}) }, & \nu < N \\1, & \nu = N\end{cases} \,\, \text{ and }\\
			\delta_\nu(\b U^n,\tn,\dt)&=\begin{cases} \frac{u^{n+1}_\nu}{\sigma_\nu(\b u^n,\b u^{(1)},\dotsc,\b u^{(s)})}, & \nu < N \\1, & \nu = N\end{cases}
		\end{aligned}
	\end{equation}
	are the so-called \emph{non-standard weights} (NSWs). Here,  $\pi_{\nu}^{(i)}$ and $\sigma_\nu$ denote the so-called \textit{Patankar-weight denominators} (PWDs) and can be chosen for the particular MPRK method to ensure stability and accuracy, see \cite{KM18, IzginThesis} for more insights. If the Butcher array contains negative entries, more care is needed when defining the MPRK method, see e.g. \cite{MPDeC}.
	\begin{example}[Second-order Family]
		The second-order family of MPRK schemes, denoted by MPRK22($\alpha$), is given by
		\begin{align}
			u_k^{(1)} =&\, u_k^n,\nonumber\\
			u_k^{(2)} =&\, u_k^n + \alpha\dt \left(r^P_k(\b u^{(1)})  + \sum_{\nu=1}^dp_{k\nu}(\b u^{(1)})\frac{u_\nu^{(2)}}{u_\nu^n}- \left( r^D_k(\b u^{(1)}) + \sum_{\nu=1}^dd_{k\nu}(\b u^{(1)})\right)\frac{u_k^{(2)}}{u_k^n}\right),\nonumber\\
			u_k^{n+1} =&\, u_k^n + \dt\sum_{j=1}^2b_j\left(r_k^P(\b u^{(j)}) + \sum_{\nu=1}^dp_{k\nu}(\b u^{(j)}) \frac{u_\nu^{n+1}}{(u_\nu^{(2)})^{\frac{1}{\alpha}}(u_\nu^n)^{1-\frac{1}{\alpha}}}\right.\nonumber\\
			&\hphantom{u_k^n + \dt\sum_{j=1}^2b_j}
			-\left.\left( r^D_k(\b u^{(j)}) + \sum_{\nu=1}^dd_{k\nu}(\b u^{(j)})\right)\frac{u_k^{n+1}}{(u_k^{(2)})^{\frac{1}{\alpha}}(u_k^n)^{1-\frac{1}{\alpha}}}\right)\nonumber
		\end{align}
		with $k=1,\dotsc,d$, $\alpha\geq \frac12$ and $b_2=\tfrac{1}{2\alpha}$ as well as $b_1=1-b_2$. In terms of the previous notation, we are using the Butcher array
		\begin{equation*}
			\begin{aligned}
				\def\arraystretch{1.2}
				\begin{array}{c|cc}
					0 &  & \\
					\alpha & \alpha  &\\
					\hline
					& 1-\frac1{2\alpha} &\frac1{2\alpha}
			\end{array}	\end{aligned}
		\end{equation*}
		and the PWDs
		\[\pi_k^{(2)} =u_k^n,\quad \sigma_k=(u_k^{(2)})^{\frac{1}{\alpha}}(u_k^n)^{1-\frac{1}{\alpha}}.\]
		In this work, we will focus on $\alpha=1$ as suggested by \cite{IssuesMPRK}.
	\end{example}
	\begin{example}[Third-order Family]\label{example:MPRK43I}
		There are two third-order families of MPRK schemes, see \cite{KM18Order3}. One of them is based on the Butcher array
		\begin{equation}\label{array:MPRK43alphabeta}
			\begin{aligned}
				\def\arraystretch{1.4}
				\begin{array}{c|ccc}
					0 &  & & \\
					\alpha & \alpha & & \\
					\beta & \frac{3\alpha\beta (1-\alpha)-\beta^2}{\alpha(2-3\alpha)}& \frac{\beta (\beta-\alpha)}{\alpha(2-3\alpha)}& \\
					\hline
					& 1+\frac{2-3(\alpha+\beta)}{6 \alpha \beta } &\frac{3 \beta-2}{6\alpha (\beta-\alpha)} & \frac{2-3\alpha}{6\beta(\beta-\alpha)}
				\end{array}
			\end{aligned}
		\end{equation}
		see \cite{KM18Order3} for more details on the domain of $\alpha$ and $\beta$. The PWDs are given by
		\begin{equation}\label{eq:PWDsMPRK43}
			\begin{aligned}
				\pi_\nu^{(2)} =\, &u_\nu^n,\qquad
				\pi_\nu^{(3)}  =\, (u_\nu^{(2)})^{\frac{1}{p}}(u_\nu^n)^{1-\frac{1}{p}},\\
				\sigma_k=\, &u_k^n+ \dt \sum_{j=1}^2\beta_j\left(r_k^P(\buj) + \sum_{\nu=1}^d  p_{k\nu}(\buj)\frac{\sigma_\nu}{(u_\nu^{(2)})^{\frac{1}{a_{21}}}(u_\nu^n)^{1-\frac{1}{a_{21}}}}\right. \\
				& \hspace{2cm}- \left.\left(r_k^D(\buj) + \sum_{\nu=1}^d d_{k\nu}(\buj)\right)\frac{\sigma_k}{(u_k^{(2)})^{\frac{1}{a_{21}}}(u_k^n)^{1-\frac{1}{a_{21}}}}\right)
			\end{aligned}
		\end{equation}
		for $\nu,k=1,\dotsc,d$, where $\beta_1=1-\beta_2$,  $\beta_2=\frac{1}{2a_{21}}$,
		and $p=3 a_{21}\left(a_{31}+a_{32} \right)b_3$.
		Note that $\bsigma$ requires the solution of another linear system, which is why this family is denoted by MPRK43I($\alpha,\beta$). In this work we focus on $\alpha=0.5$ and $\beta=0.75$.

	\end{example}

	In any case we point out that the schemes are implicit due to the numerators in \eqref{eq:NSW_MPRK}. Indeed, they are linearly implicit as the PWDs $\pi_{\nu}^{(i)}$ and $\sigma_\nu$ are required to be independent of the numerator \cite{BDM2003,IzginThesis}. Consequently, an MPRK scheme can be written in matrix-vector notation as follows.
	\begin{equation}\label{eq:MPRK_PDRS_matrix}
		\begin{aligned}
			\b M^{(i)}\bui&= \b u^n+\dt \sum_{j=1}^{i-1}a_{ij} \b r^P(\b u^{(j)}),\quad i=1,\dotsc,s, \\
			\b M\b u^{n+1}&= \b u^n + \dt\sum_{j=1}^{s}b_{j} \b r^P(\b u^{(j)}),
		\end{aligned}
	\end{equation}
	where $\b r^P=(r_1^P,\dotsc, r_d^P)^T$ and $\b M^{(i)}=(m^{(i)}_{k\nu})_{1\leq k,\nu\leq d}$ with
	\begin{equation} \label{eq:M_stage}
		\begin{aligned}
			m^{(i)}_{kk}&=1+ \dt \sum_{j=1}^{i-1}a_{ij}\left(r_k^D(\b u^{(j)}) +\sum_{\nu=1}^dd_{k\nu}(\b u^{(j)}) \right)\frac{1}{\pi_k^{(i)}}, \\
			m^{(i)}_{k\nu}&= -\dt \sum_{j=1}^{i-1}a_{ij}p_{k\nu}(\b u^{(j)})\frac{1}{\pi_\nu^{(i)}}, \quad k\neq \nu
		\end{aligned}
	\end{equation}
	as well as, using $\b M=(m_{k\nu})_{1\leq k,\nu\leq d}$,
	\begin{equation}\label{eq:M_update}
		\begin{aligned}
			m_{kk}&= 1+\dt \sum_{j=1}^sb_j\left(r_k^D(\b u^{(j)}) +\sum_{\nu=1}^dd_{k\nu}(\b u^{(j)}) \right)\frac{1}{\sigma_k}, \\
			m_{k\nu}&= -\dt \sum_{j=1}^sb_jp_{k\nu}(\b u^{(j)})\frac{1}{\sigma_\nu}, \quad k\neq \nu.
		\end{aligned}
	\end{equation}

	\subsubsection{MP Strong-Stability-Preserving-Runge--Kutta Schemes}\label{sec:MPSSPRK}
	Although there exist second- and third-order MPSSPRK schemes, see \cite{SSPMPRK2,SSPMPRK3}, we focus for simplicity on the second-order method and the conservative PDS case. The generalization to non-conservative PDRS is straightforward but complicates the formulae. Also, the consideration of third-order MPSSPRK schemes will be left out for future work.
	The two-parameter family of second-order MPSSPRK schemes from \cite{SSPMPRK2} is given by
	\begin{equation}\label{eq:SSPMPRK2}
		\begin{aligned}
			\b u^{(1)}=&\b u^n,\\
			u_i^{(2)}=& u_i^n+\beta \dt\left(\sum_{j=1}^dp_{ij}(\b u^n)\frac{u_j^{(2)}}{u_j^n}- \sum_{j=1}^dd_{ij}(\b u^n)\frac{u_i^{(2)}}{u_i^n}\right),\\
			u_i^{n+1}={}& (1-\alpha)u_i^n+\alpha u_i^{(2)}+\dt\Biggl(\sum_{j=1}^d\left(\beta_{20}p_{ij}(\b u^n)+\beta_{21}p_{ij}(\b u^{(2)})\right)\frac{u_j^{n+1}}{(u_j^n)^{1-s}(u_j^{(2)})^s} \\& - \sum_{j=1}^d\left(\beta_{20}d_{ij}(\b u^n)+\beta_{21}d_{ij}(\b u^{(2)})\right)\frac{u_i^{n+1}}{(u_i^n)^{1-s}(u_i^{(2)})^s}\Biggr),
		\end{aligned}
	\end{equation}
	where $\beta_{20}=1-\frac{1}{2\beta}-\alpha\beta$, $\beta_{21}=\frac{1}{2\beta}$ and $s=\frac{1-\alpha\beta+\alpha\beta^2}{\beta(1-\alpha\beta)}$. There, the free parameters $\alpha$ and $\beta$ are subject to
	\begin{equation}
		0\leq \alpha\leq 1,\quad \beta>0,\quad \alpha\beta+\frac{1}{2\beta}\leq 1.\label{eq:alphabeta_conditions}
	\end{equation}
	We refer to the above scheme as MPSSPRK2($\alpha,\beta$). For numerical experiments we use $\alpha=\frac12$ and $\beta=1$ \cite{SSPMPRK2}.

	Substituting the second stage into the update, we can collect production and destruction terms. Hence, in the notation of \eqref{eq:RHS_MPRK}, the solution-dependent coefficients for the conservative PDS case are
	\begin{equation}\label{eq:NSW_MPSSPRK}
		\begin{aligned}
			a_{21}^{[\nu]}(\b U^n,t_n,\dt)&=\beta\frac{u_\nu^{(2)}}{u^n_\nu}\\
			b_1^{[\nu]}(\b U^n,t_n,\dt)&=\alpha\beta\frac{u_\nu^{(2)}}{u^n_\nu} + \beta_{20}\frac{u_\nu^{n+1}}{\sigma_\nu},\quad b_2^{[\nu]}(\b U^n,t_n,\dt)=\beta_{21}\frac{u_\nu^{n+1}}{\sigma_\nu},
		\end{aligned}
	\end{equation}
	where $\sigma_\nu = (u_\nu^n)^{1-s}(u_\nu^{(2)})^s$.
	\section{Positivity-Preserving Relaxation Technique}
	In what follows, we adapt the classical relaxation algorithm from Section~\ref{sec:relax} such that it becomes positivity-preserving.
	\subsection{Entropy Dissipative Case}\label{sec:dissipative}
	First of all, we present a suitable estimate $\etanew$ for a general NSARK scheme.
	In order to minimize the computational effort, we propose to re-use the computed stage values of the NSARK scheme satisfying \eqref{eq:NSweights}, that is
	\begin{equation}\label{eq:nsark_and_eta}
		\begin{aligned}
			\bui & = \b u^n + \dt \sum_{j=1}^s  \sum_{\substack{\nu=1}}^N a_{ij}\gamma^{[i]}_{\nu}(\b U^n,\tn,\dt)  \fnu(\buj), \quad i=1,\dotsc,s,\\
			\b u^{n+1} & = \b u^n + \dt \sum_{j=1}^s \sum_{\substack{\nu=1}}^N b_j\delta_{\nu}(\b U^n,\tn,\dt) \fnu(\buj),\\
			\etanew &= \eta(\b u^n)+ \dt \sum_{j=1}^s b_j (\eta'\b f)(\buj),
		\end{aligned}
	\end{equation}
	which can be interpreted as computing the numerical approximation of the augmented system
	\[\frac{\dd}{\dd t}\cVec{\b u(t)\\\eta(\b u(t))} =\sum_{\substack{\nu=1}}^N\underbrace{\cVec{  \fnu(\b u(t))\\0}}_{\eqqcolon \hat{\b f}^{[\nu]}(\b u(t))} + \underbrace{\cVec{\b 0\\ (\eta'\b f)(\b u(t))}}_{\eqqcolon\hat{\b f}^{[N+1]}(\b u(t))}\]
	using an NSARK method with the extended Butcher tableau
	\begin{equation}\label{eq:NSARK_augmented_eta}
		\arraycolsep=1.4pt\def\arraystretch{1.5}
		\begin{array}{c|c|c|c|c|c}
			\b c &\bm\Gamma_1(\b U^n,t_n,\dt) \b A         &    \bm\Gamma_2(\b U^n,t_n,\dt) \b A  & \cdots & \bm\Gamma_N(\b U^n,t_n,\dt) \b A &	\b A\\ \hline
			&	\delta_1(\b U^n,t_n,\dt) \b b         &    \delta_2(\b U^n,t_n,\dt) \b b  & \cdots & \delta_N(\b U^n,t_n,\dt) \b b 	&	\b b
		\end{array},
	\end{equation}
	where $\bm\Gamma_\nu\coloneqq\diag(\gamma^{[1]}_\nu,\dotsc, \gamma^{[s]}_\nu)$. Assuming that the two corresponding base methods described by the Butcher tableaux
	\[\arraycolsep=1.4pt\def\arraystretch{1.5}
	\begin{array}{c|c|c|c|c}
		\b c &	\bm\Gamma_1(\b U^n,t_n,\dt) \b A        &    \bm \Gamma_2(\b U^n,t_n,\dt)\b A & \cdots & \bm \Gamma_N(\b U^n,t_n,\dt) \b A\\ \hline
		&	\delta_1(\b U^n,t_n,\dt) \b b        &    	\delta_2(\b U^n,t_n,\dt)\b b  & \cdots & 	\delta_N(\b U^n,t_n,\dt)\b b
	\end{array}\quad  \text{ and } \quad \arraycolsep=1.4pt\def\arraystretch{1.5}
	\begin{array}{c|c}
		\b c &		\b A      \\ \hline
		&	\b b
	\end{array}
	\]
	both are of $p$-th order for some $p\in\{2,3,4\}$, it can be seen from \cite[Theorem~18,Lemma~25,Lemma~26]{NSARK} that the overall scheme \eqref{eq:nsark_and_eta} is of order $p$, since the respective order conditions are decoupled with respect to the columns of the tableau \eqref{eq:NSARK_augmented_eta}.
	\begin{remark}
		The NSWs from MPSSPRK schemes, see \eqref{eq:NSW_MPSSPRK}, also satisfy \eqref{eq:NSweights} after multiplying and dividing by the respective Butcher coefficient.
	\end{remark}
	As a result, we indeed obtain that $\etanew=\eta(\unew)+\O(\dt^{p+1})$, and additionally,
	\begin{equation}\label{eq:etanew<=etaold}
		\etanew\leq \eta(\b u^n)
	\end{equation} whenever $b_j\geq 0$ for $j=1,\dotsc,s$. If $b_j<0$ for some $j=1,\dotsc,s$, one can still use Gau\ss\, quadrature, as suggested in \cite[Page 866]{ranocha2020general} together with the unconditionally positive dense output formulae derived in \cite{izgin2024} to obtain the approximations needed for the quadrature formula.

	In view of Remark~\ref{rem:issue_pos_relax}, the relaxation technique is in danger of not being positivity-preserving for $\gamma>1$. The following corollary gives a work around for dissipative problems as we will discuss in the upcoming Remark~\ref{rem:pos_relax_diss}.
	\begin{corollary}[{\cite[Pages 882-883]{ranocha2020general}}]\label{cor:relax_diss}
		If $\eta$ is convex with $\eta''(\uold)(\b f(\uold),\b f(\uold))\neq 0$
		then $r$ from \eqref{eq:r}
		is convex and satisfies $r(0)=0$, $r'(0)<0$ and $r'(\gamma)>0$ for all $\gamma\geq 1$ and $\dt>0$ small enough.
	\end{corollary}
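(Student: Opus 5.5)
Throughout, write $\b d\coloneqq\unew-\uold$ and $\Delta\eta\coloneqq\etanew-\etaold$, so that by \eqref{eq:r}
\[
r(\gamma)=\eta(\uold+\gamma\b d)-\etaold-\gamma\,\Delta\eta .
\]

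\emph{Convexity and $r(0)=0$.} The map $\gamma\mapsto\eta(\uold+\gamma\b d)$ is convex because $\eta$ is convex and the argument is affine in $\gamma$. Subtracting the affine function $\etaold+\gamma\Delta\eta$ preserves convexity, so $r$ is convex. Since $\etaold=\eta(\uold)$, we get $r(0)=0$ directly.

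\emph{Expansions of $r'$ and $r''$.} We have
\[
r'(\gamma)=\eta'(\uold+\gamma\b d)\,\b d-\Delta\eta, \qquad r''(\gamma)=\eta''(\uold+\gamma\b d)(\b d,\b d).
\]
Use $\b d=\dt\,\b f(\uold)+\O(\dt^2)$, which holds for any consistent method and in particular for the NSARK schemes considered. Use also $\Delta\eta=\dt\sum_j b_j(\eta'\b f)(\buj)=\dt\,(\eta'\b f)(\uold)+\O(\dt^2)$, by \eqref{eq:nsark_and_eta} and $\sum_j b_j=1$. Both expressions for $r'(0)$ then have leading term $\dt\,(\eta'\b f)(\uold)$, which cancels. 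One more order is needed, and rather than computing it directly I would argue as follows.

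\emph{Step 1: $r$ is nearly symmetric about $1/2$.} Taylor-expand $\eta$ along the segment:
\[
r(1)=\eta(\unew)-\etanew=\O(\dt^{p+1}),
\]
because $\etanew=\eta(\unew)+\O(\dt^{p+1})$ with $p\ge2$. By convexity,
\[
r(1)-r(0)=\int_0^1 r'(\gamma)\,\dd\gamma,
\]
and $r''(\gamma)=\dt^2\,\eta''(\uold)(\b f,\b f)+\O(\dt^3)$ uniformly for $\gamma$ in bounded sets. The assumption $\eta''(\uold)(\b f(\uold),\b f(\uold))\neq0$ together with convexity forces this to equal $\dt^2\kappa$ with $\kappa>0$. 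Hence $r'$ is strictly increasing with slope $\approx\dt^2\kappa$. From $\int_0^1 r'=\O(\dt^{p+1})=\O(\dt^3)$ we get
\[
r'(\gamma)=\dt^2\kappa\left(\gamma-\tfrac12\right)+\O(\dt^3).
\]

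\emph{Step 2: signs of $r'$.} Evaluating the expansion at $\gamma=0$ gives $r'(0)=-\tfrac12\kappa\dt^2+\O(\dt^3)<0$ for small $\dt$. At $\gamma=1$ it gives $r'(1)=\tfrac12\kappa\dt^2+\O(\dt^3)>0$. Since $r$ is convex, $r'$ is nondecreasing, so $r'(\gamma)\ge r'(1)>0$ for all $\gamma\ge1$. Note that this needs no uniformity in $\gamma$ beyond $[0,1]$: monotonicity of $r'$ handles unbounded $\gamma$, which is exactly why convexity is used.

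\emph{Main obstacle.} The delicate point is making Step 1 rigorous. One must show that the $\O(\dt^3)$ remainders in $r''$ are uniform on $[0,1]$, which follows from smoothness of $\eta$ and $\b d=\O(\dt)$. One must also justify that $r'(\gamma)-r'(0)=\int_0^\gamma r''$ combined with $\int_0^1 r'=\O(\dt^3)$ pins down $r'(0)=-\tfrac12\kappa\dt^2+\O(\dt^3)$. This is elementary once written as
\[
\int_0^1 r'=r'(0)+\int_0^1(1-s)\,r''(s)\,\dd s .
\]
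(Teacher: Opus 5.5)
Your proof is correct. It gets the crucial sign of $r'(0)$ by a different mechanism than the argument behind the citation.

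In \cite{ranocha2020general} the increments $\unew-\uold$ and $\etanew-\etaold$ are Taylor-expanded to second order in $\dt$; this is where $p\geq 2$ enters. The first-order terms and the $\eta'\b f'\b f$ terms cancel, leaving $r(\gamma)=\tfrac{\gamma(\gamma-1)}{2}\dt^2\,\eta''(\uold)(\b f(\uold),\b f(\uold))+\O(\dt^3)$. From this one reads off $r'(0)<0<r'(1)$, and convexity then extends $r'>0$ to all $\gamma\geq 1$.

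You never expand the scheme beyond first order. You use the exact identity $r'(0)=r(1)-\int_0^1(1-s)\,r''(s)\,\dd s$ (equivalently, its mirror image $r'(1)=r(1)+\int_0^1 s\,r''(s)\,\dd s$). Together with $r(1)=\eta(\unew)-\etanew=\O(\dt^{p+1})$ and $r''=\kappa\dt^2+\O(\dt^3)$, this gives the leading terms $\mp\tfrac{\kappa}{2}\dt^2$ directly. The estimate on $r''$ needs only $\unew-\uold=\dt\,\b f(\uold)+\O(\dt^2)$.

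What your route buys is that all second-order information enters through the single accuracy hypothesis on $\etanew$. So the argument covers both the dissipative estimate \eqref{eq:nsark_and_eta} and the conservative choice $\etanew=\etaold$ at once. It also requires no second-order expansion of the NSARK stages; your preliminary expansion of $\etanew-\etaold$ is in fact never used and can be dropped.

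What the direct expansion buys is the whole profile of $r$. That gives $r(\tfrac12)\,r(\tfrac32)<0$ and $\gamma=1+\O(\dt^{p-1})$, as in Theorem~\ref{thm:relax}, without invoking convexity. Integrating your formula for $r'$ recovers the same profile, though.

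Two cosmetic points. First, $r(1)-r(0)=\int_0^1 r'$ is the fundamental theorem of calculus, not a consequence of convexity. Second, the $\O(\dt^3)$ remainder in $r''$ uses local Lipschitz continuity of $\eta''$; mere continuity, which gives $o(\dt^2)$, would already suffice for the signs.
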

	\begin{remark}[Positivity-preserving relaxation for convex $\eta$]\label{rem:pos_relax_diss}
		Suppose that $\gamma^*>1$ is the solution to \eqref{eq:System_Relaxation_step}, \ie $r(\gamma^*)=0$, so that the positivity of the relaxation update $\ugamman$ is not guaranteed any longer. Because of Corollary~\ref{cor:relax_diss} we know that $r(\gamma)\leq r(\gamma^*)=0$ for all $\gamma\in [1,\gamma^*]$. In particular $r(1)\leq 0$ follows, i.\,e., for $\gamma=1$ we obtain from \eqref{eq:r} the relation \[\eta(\ugamman)=\eta(\unew)\leq \etanew.\]
		This means, that due to \eqref{eq:etanew<=etaold} only more dissipation will be introduced by using \[\gamma=\min\{\gamma^*,1\}\in(0,1],\] where $\gamma^*$ is the solution to \eqref{eq:System_Relaxation_step}. But with that choice, $\ugamman$ is again a convex combination of positive data, and hence, positivity preservation is guaranteed for dissipative problems with a convex $\eta$.
	\end{remark}

	\subsection{Entropy-Conservative Case}\label{sec:cons_entropy}
	As $\ugamman$ in \eqref{eq:System_Relaxation_step} is not guaranteed to be positivity-preserving for $\gamma>1$, see Remark~\ref{rem:issue_pos_relax}, we propose to replace the update formula by a positivity-preserving variant. To indicate this difference in our notation we will write $\bunplusgamma$ for a positivity-preserving approximation rather than $\ugamman$.
	\subsubsection{Explicit Positivity-Preserving Procedure}
	If we are only interested in preserving positivity, a single nonlinear invariant but no further linear invariants, we may apply a Patankar--Runge--Kutta method to guarantee the positivity of the update and combine it with the geometric mean
	\begin{equation}\label{eq:geom_mean}
		\bunplusgamma=(\b u^{n+1})^{\gamma}(\b u^n)^{1-\gamma}.
	\end{equation}
	In logarithmic variables, this reduces to
	\[	\ln(\bunplusgamma)=\ln(\b u^n) +\gamma (\ln(\b u^{n+1})-\ln(\b u^n)), \]
	where we can find a solution to the relaxation problem in logarithmic variables according to the classical theory. Now, if $\eta$ is convex and non-decreasing in each argument the composition $\eta\circ \exp$ is also convex \cite[Section~2.3.4]{boyd2004convex}, and hence, also
	\[\eta(\bunplusgamma)=\etaold\]
	possesses a positive solution $\gamma=1+\O(\dt^{p-1})$.

	\subsubsection{Implicit Positivity-Preserving Procedure for Conservative PDS}
	One possible candidate for computing $\bunplusgamma$ is to use dense output formulae recently developed in \cite{izgin2024}, which we briefly recall in the upcoming section.
	\paragraph{Positivity-Preserving Dense Output}
	We first focus on Runge--Kutta methods and the MP variant, but the ideas can be carried out for MPSSPRK schemes in a straightforward manner as we will see.

	The main idea is to replace $b_j\in \R$ by a function $\bar b_j\colon [0,1]\to \R$ such that
	\begin{equation*}\label{eq:RKdense}
		u^{n+\gamma}_k = u^n_k + \dt \sum_{j=1}^s \bar b_j(\gamma) \sum_{\nu=1}^d \left(r_k^P(\b u^{(j)}) - r_k^D(\b u^{(j)}) + p_{k\nu}(\b u^{(j)}) - d_{k\nu}(\b u^{(j)})  \right)
	\end{equation*}
	approximates $u_k(t^n+\gamma\dt)$. We impose \[\bar b_j(0)=0 \qta \bar b_j(1)=b_j\] to recover
	\begin{equation*}\label{eq:inner_consistency}
		\b u^{n+\gamma}=\begin{cases}
			\b u^n,& \gamma = 0,\\
			\b u^{n+1}, & \gamma = 1.
		\end{cases}
	\end{equation*}
	\begin{example}[Second-order dense output for MPRK22($\alpha$)]\label{example:DO_MPRK2}
		Using $\bar b_j(\gamma)=\gamma b_j$ and \begin{equation*}\label{eq:DO_ansatz1}
			\unplusgamma_k=u_k^n + \dt\sum_{j=1}^s \bar b_j(\gamma)\left(r_k^P(\b u^{(j)}) + \sum_{\nu=1}^dp_{k\nu}(\b u^{(j)}) \frac{u_\nu^{n+1}}{\sigma_\nu}-\left( r^D_k(\b u^{(j)}) + \sum_{\nu=1}^dd_{k\nu}(\b u^{(j)})\right)\frac{u_k^{n+1}}{\sigma_k}\right)
		\end{equation*} yields a positivity-preserving dense output. Indeed, we find  $\b u^{n+\gamma}=(1-\gamma)\b u^n+\gamma \b u^{n+1}$ in this case, which coincides with the relaxation update. For $\gamma>1$ this is not necessarily positivity-preserving.  For our purposes, we want to ensure positivity even for $\gamma >1$, which can be done using
		\begin{equation}\label{eq:DO_ansatz2}
			\unplusgamma_k=u_k^n + \dt\sum_{j=1}^s \bar b_j(\gamma)\left(r_k^P(\b u^{(j)}) + \sum_{\nu=1}^dp_{k\nu}(\b u^{(j)}) \frac{u_\nu^{n+\gamma}}{\bar\sigma_\nu(\gamma)}-\left( r^D_k(\b u^{(j)}) + \sum_{\nu=1}^dd_{k\nu}(\b u^{(j)})\right)\frac{u_k^{n+\gamma}}{\bar\sigma_k(\gamma)}\right)
		\end{equation}
		together with
		\begin{equation}\label{eq:sigma_MPRK22}
			\bar \sigma_k(\gamma) = \sigma_k=(u_k^{(2)})^{\frac{1}{\alpha}}(u_k^n)^{1-\frac{1}{\alpha}}
		\end{equation}
		or
		\begin{equation}\label{eq:sigma_denseMPRK2}
			\bar \sigma_k(\gamma) =(u_k^{(2)})^{\frac{\gamma}{\alpha}}(u_k^n)^{1-\frac{\gamma}{\alpha}}.
		\end{equation}
	\end{example}
	\begin{example}[Higher order positive dense output for MPRK]\label{example:highorder_dense}
		In general, one may use $\bar b_j(\gamma)$ from the classical dense output formula paired with the update \eqref{eq:DO_ansatz2}. Then, the only quantity to define is $\bar\bsigma(\gamma)$. According to \cite{izgin2024}, it is sufficient to use a lower order dense output MPRK scheme for the computation of $\bar\bsigma(\gamma)$. For instance, we note that third-order MPRK schemes are equipped with \[\bar b_1(\gamma)=\gamma -(1-b_1)\gamma^2,\quad \bar b_j(\gamma)=\gamma^2b_j, \quad j=2,\dotsc,s\]
		for the dense output. However, we will discuss a different approach in this work, and thus, omit to also recall $\bar\bsigma$ from \cite{izgin2024}.
	\end{example}
	\begin{example}[Second-order dense output for MPSSPRK]\label{example:MPSSPRK2_dense}
		Let us incorporate the $\gamma$-dependency in \eqref{eq:NSW_MPSSPRK}, which gives
		\begin{equation*}
			\begin{aligned}
				b_1^{[\nu]}(\b U^n,t_n,\dt,\gamma)&=\gamma\left(\alpha\beta\frac{u_\nu^{(2)}}{u^n_\nu} + \beta_{20}\frac{u_\nu^{n+\gamma}}{\bar\sigma_\nu(\gamma)}\right),\quad b_2^{[\nu]}(\b U^n,t_n,\dt,\gamma)=\gamma\beta_{21}\frac{u_\nu^{n+\gamma}}{\bar\sigma_\nu(\gamma)},
			\end{aligned}
		\end{equation*}
		where we restrict to the choice $\bar\sigma_\nu(\gamma) = (u_\nu^n)^{1-\gamma s}(u_\nu^{(2)})^{\gamma s}$. Then
		\[	\b u^{n+\gamma}  = \b u^n + \dt \sum_{j=1}^s \sum_{\substack{\nu=1}}^d b_j(\b U^n,\tn,\dt,\gamma) \fnu(\buj).\]
	\end{example}
	\begin{remark}[Use of dense output for relaxation]\label{remark:issue_denseoutput_relaxation}
		As we will show, we can use the dense output from Example~\ref{example:DO_MPRK2} for the relaxation algorithm. However, proving the existence of a solution to the relaxation equation for \eqref{eq:sigma_MPRK22} is more complex than for \eqref{eq:sigma_denseMPRK2}, which is due to the respective truncation errors.
		Moreover, as illustrated in Example~\ref{example:highorder_dense}, the bootstrapping for higher order positive dense output involves higher degree polynomials for $\bar b_j$, which may not be positive for $\gamma>1$. This is crucial since the solvability for the linear systems \eqref{eq:MPRK_PDRS_matrix} relies on positive Butcher coefficients. While we could implement a trick \cite{MPDeC} to overcome this issue, we rather focus on a different bootstrapping approach to keep the overall algorithm simple.
	\end{remark}
	\paragraph{Preparatory Results for MPRK22($\alpha$)}
	We proceed to develop a relaxation technique using \eqref{eq:DO_ansatz2}-\eqref{eq:sigma_MPRK22}, which is more complicated than using \eqref{eq:sigma_denseMPRK2} but on the other hand motivates us to derive more general results.
	To that end, we note that the scheme with \eqref{eq:DO_ansatz2}-\eqref{eq:sigma_MPRK22} can be written in matrix-vector notation as
	\begin{equation}\label{eq:MPRK22alpha_DO}
		\begin{aligned}
			\b u^{(1)}&=\b u^n\\
			\b M^{(2)}(\b u^n)\b u^{(2)}&=\b u^n + \alpha\dt \b r^P(\b u^n)\\
			\b M_\gamma(\b u^n)	\bunplusgamma &= \b u^n + \gamma\dt\sum_{j=1}^s b_j\b r^P(\b u^{(j)}), \quad \gamma >0,
		\end{aligned}
	\end{equation}
	where  $\b M^{(2)}$ can be obtained from \eqref{eq:M_stage}
	and
	\begin{equation}
		\b M_\gamma= \gamma(\b M-\b I) +\b I\label{eq:Mgamma}
	\end{equation}
	with $\b M$ from \eqref{eq:M_update}.

	Finally, the relaxation step \eqref{eq:System_Relaxation_step} for entropy-conservative problems is now updated to
	\begin{equation}\label{eq:System_Relaxation_MPRK_step}
		\cVec{\tgamman\\
			\b M_\gamma(\uold)	\bunplusgamma\\
			\eta(\bunplusgamma)} = \Vec{\told\\ \uold\\ \etaold}+ \gamma \Vec{\tnew -\told\\ \dt\sum_{j=1}^2 b_j\b r^P(\b u^{(j)})\\ 0} ,
	\end{equation}
	resulting in a coupled linear-nonlinear system for the simultaneous computation of $\gamma$ and $\bunplusgamma$. Note that if such a $\gamma>0$ exists, the relaxation method for MPRK22($\alpha$) naturally is of the correct order for all $\gamma$ as we are using an appropriate dense output formula.

	Since we allow for a truncation error of $\O(\dt^3)$ for the second-order MPRK22$(\alpha$) scheme, it is beneficial to prove the following
	\begin{lemma}\label{lem:dgamma} If $\bar b_j(\gamma)=\gamma b_j$ and $\bar\sigma_k(\gamma)=\sigma_k$, then the MPRK22($\alpha$) scheme \eqref{eq:DO_ansatz2} satisfies \begin{equation}
			\bunplusgamma = \b u^{n+1} + (\gamma-1)\dt \b d^n_\gamma +\O(\dt^3),
		\end{equation}
		where
		\begin{equation}\label{eq:dgamma}
			\begin{aligned}
				d^n_{\gamma,k}&=\frac{u_k^{n+1}-u_k^n}{\dt} +\dt\gamma\sum_{j=1}^sb_j\left(\sum_{\nu=1}^dp_{k\nu}(\buj)\frac{f_\nu(\b u^n)}{u_\nu^n}-\left(r_k^D(\buj)+\sum_{\nu=1}^dd_{k\nu}(\buj)\right)\frac{f_k(\b u^n)}{u_k^n} \right)\\&=f_k(\b u^n) + \O(\dt).
			\end{aligned}
		\end{equation}
	\end{lemma}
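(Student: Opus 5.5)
The plan is to compare the dense output with the actual update through the two linear systems \eqref{eq:MPRK22alpha_DO} and \eqref{eq:MPRK_PDRS_matrix}, instead of expanding $\bunplusgamma$ in a Taylor series directly. Write $\b R\coloneqq\sum_{j=1}^s b_j\b r^P(\buj)$. Then
\[
\b M\b u^{n+1}=\b u^n+\dt\b R,\qquad \b M_\gamma\bunplusgamma=\b u^n+\gamma\dt\b R,\qquad \b M_\gamma=\b I+\gamma(\b M-\b I).
\]
Subtracting $\b M_\gamma\b u^{n+1}$ from the second relation and using $(\b M-\b I)\b u^{n+1}=\b u^n-\b u^{n+1}+\dt\b R$ from the first gives the exact identity
\[
\b M_\gamma(\bunplusgamma-\b u^{n+1}) = \b u^n+\gamma\dt\b R-\b u^{n+1}-\gamma\bigl(\b u^n-\b u^{n+1}+\dt\b R\bigr)=(\gamma-1)(\b u^{n+1}-\b u^n).
\]
Hence $\bunplusgamma=\b u^{n+1}+(\gamma-1)\b M_\gamma^{-1}(\b u^{n+1}-\b u^n)$, with no approximation so far. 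The factor $(\gamma-1)$ in the lemma comes out exactly.

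Next I will expand $\b M_\gamma^{-1}$. The PWDs satisfy $\sigma_k=(u_k^{(2)})^{1/\alpha}(u_k^n)^{1-1/\alpha}=u_k^n+\O(\dt)$, because $\b u^{(2)}=\b u^n+\O(\dt)$ by the stage equation. For fixed $\b u^n>\b 0$ and small $\dt$, the $\sigma_k$ are therefore bounded away from zero. By \eqref{eq:M_update} this gives $\b M-\b I=\O(\dt)$ entrywise, uniformly for $\gamma$ in a compact subset of $(0,\infty)$. A Neumann series then gives invertibility of $\b M_\gamma$ (alternatively, its $M$-matrix structure for $\gamma>0$) and
\[
\b M_\gamma^{-1}=\b I-\gamma(\b M-\b I)+\O(\dt^2).
\]
Since $\b u^{n+1}-\b u^n=\O(\dt)$, we obtain
\[
\bunplusgamma=\b u^{n+1}+(\gamma-1)\Bigl[(\b u^{n+1}-\b u^n)-\gamma(\b M-\b I)(\b u^{n+1}-\b u^n)\Bigr]+\O(\dt^3).
\]

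It remains to identify the bracket with $\dt\,\b d^n_\gamma$ up to $\O(\dt^3)$. By \eqref{eq:M_update}, the $k$-th component of $-(\b M-\b I)(\b u^{n+1}-\b u^n)$ is
\[
\dt\sum_j b_j\Bigl(\sum_\nu p_{k\nu}(\buj)\tfrac{u^{n+1}_\nu-u^n_\nu}{\sigma_\nu}-\bigl(r_k^D(\buj)+\textstyle\sum_\nu d_{k\nu}(\buj)\bigr)\tfrac{u^{n+1}_k-u^n_k}{\sigma_k}\Bigr).
\]
In this expression I will substitute $\b u^{n+1}-\b u^n=\dt\,\b f(\b u^n)+\O(\dt^2)$, which holds because the scheme is consistent, and $\sigma_\nu=u^n_\nu+\O(\dt)$. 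Each substitution costs only $\O(\dt^3)$ after multiplication by the prefactor $\dt$, so the term equals $\dt^2$ times the sum appearing in \eqref{eq:dgamma}. Dividing by $\dt$ gives exactly $d^n_{\gamma,k}$. The final claim $\b d^n_\gamma=\b f(\b u^n)+\O(\dt)$ is then immediate from consistency.

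The main obstacle is the uniformity of the $\O$-terms: I need $\sigma_k$ bounded away from zero, with $\b u^{(2)}$ and $\b u^{n+1}$ depending smoothly on $\dt$, and $\gamma$ restricted to a bounded set. I will justify these through the positivity of the MPRK stages and smoothness of $p_{k\nu}$, $d_{k\nu}$ and $r^D_k$ near $\b u^n$.
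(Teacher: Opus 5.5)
Your proof is correct, but it takes a different route from the paper.

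The paper works componentwise. It imports from Kopecz and Meister the expansion $u_\nu^{n+\gamma}/\sigma_\nu=1+(\gamma-1)\dt\, f_\nu(\b u^n)/u_\nu^n+\O(\dt^2)$. It then replaces the leading $1$ by $u_\nu^{n+1}/\sigma_\nu$, using the separate property $u_\nu^{n+1}/\sigma_\nu=1+\O(\dt^2)$ of the Patankar-weight denominators, and substitutes the result into \eqref{eq:DO_ansatz2}.

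You instead use the fact that, for $\bar\sigma=\sigma$, the matrix $\b M_\gamma=\b I+\gamma(\b M-\b I)$ is affine in $\gamma$. This gives the exact identity $\b M_\gamma(\bunplusgamma-\b u^{n+1})=(\gamma-1)(\b u^{n+1}-\b u^n)$. After that you only need a first-order Neumann expansion of $\b M_\gamma^{-1}$, first-order consistency, and $\sigma_\nu=u_\nu^n+\O(\dt)$. In particular, you never use $u_\nu^{n+1}/\sigma_\nu=1+\O(\dt^2)$ or the cited ratio expansions.

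What your approach buys:
\begin{itemize}
\item It is more self-contained and uses weaker information.
\item It shows that one may take $\b d^n_\gamma\coloneqq\dt^{-1}\b M_\gamma^{-1}(\b u^{n+1}-\b u^n)$ with no remainder at all.
\item This $\b d^n_\gamma$ is rational in $\gamma$ wherever $\b M_\gamma$ is invertible, so the $\mathcal C^1$ hypothesis of Lemma~\ref{lem:wgamma} follows immediately.
\end{itemize}

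What the paper's ratio-expansion buys is portability. The same template is reused in Remark~\ref{rem:dgamma_using_sigma(gamma)} and Lemma~\ref{lem:bootstrap}, where $\bar\bsigma$ depends on $\gamma$. There $\b M_\gamma$ is no longer affine in $\gamma$, and your identity picks up an extra term $-\gamma\bigl(\widetilde{\b M}(\gamma)-\b M\bigr)\b u^{n+1}$ that must be estimated separately.
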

	\begin{proof}
		Utilizing \cite[Lemma~2, Lemma~3]{KM2019}, we observe
		\begin{equation}\label{eq:u:sigma}
			\frac{u_\nu^{n+\gamma}}{\bar \sigma_\nu(\gamma)}=\frac{u_\nu^{n+\gamma}}{(u_\nu^{(2)})^{\frac{1}{\alpha}}(u_\nu^n)^{1-\frac{1}{\alpha}}} = 1+(\gamma-1)\dt \frac{f_\nu(\b u^n)}{u_\nu^n} +\O(\dt^2)=\frac{u_\nu^{n+1}}{\sigma_\nu}+(\gamma-1)\dt \frac{f_\nu(\b u^n)}{u_\nu^n} +\O(\dt^2)
		\end{equation}
		as $\frac{u_\nu^{n+1}}{\sigma_\nu}=1+\O(\dt^2)$ \cite{NSARK}.
		Substituting this into \eqref{eq:DO_ansatz2} we receive
		\begin{align*}
			\unplusgamma_k&=u_k^n + \gamma\dt\sum_{j=1}^s b_j\left(r_k^P(\b u^{(j)}) + \sum_{\nu=1}^dp_{k\nu}(\b u^{(j)}) \frac{u_\nu^{n+\gamma}}{\bar\sigma_\nu(\gamma)}-\left( r^D_k(\b u^{(j)}) + \sum_{\nu=1}^dd_{k\nu}(\b u^{(j)})\right)\frac{u_k^{n+\gamma}}{\bar\sigma_k(\gamma)}\right)\\
			&= u_k^n+\gamma(u_k^{n+1}-u_k^n) \\
			&+ \gamma(\gamma-1)\dt^2\sum_{j=1}^s b_j\left(\sum_{\nu=1}^dp_{k\nu}(\b u^{(j)}) \frac{f_\nu(\b u^n)}{u_\nu^n}-\left( r^D_k(\b u^{(j)}) + \sum_{\nu=1}^dd_{k\nu}(\b u^{(j)})\right)\frac{f_k(\b u^n)}{u_k^n}\right) +\O(\dt^3)\\
			&=u_k^{n+1} +(\gamma-1)(u_k^{n+1}-u_k^n)\\
			&+ \gamma(\gamma-1)\dt^2\sum_{j=1}^s b_j\left(\sum_{\nu=1}^dp_{k\nu}(\b u^{(j)}) \frac{f_\nu(\b u^n)}{u_\nu^n}-\left( r^D_k(\b u^{(j)}) + \sum_{\nu=1}^dd_{k\nu}(\b u^{(j)})\right)\frac{f_k(\b u^n)}{u_k^n}\right) +\O(\dt^3)\\
			&= u_k^{n+1} +(\gamma-1)\dt	d^n_{\gamma,k} +\O(\dt^3).
		\end{align*}
	\end{proof}
	\begin{remark}[Influence of $\bar \bsigma$ and application to MPSSPRK] \label{rem:dgamma_using_sigma(gamma)}
		In the situation of Lemma~\ref{lem:dgamma}, if we use \eqref{eq:sigma_denseMPRK2} rather than \eqref{eq:sigma_MPRK22}, then instead of \eqref{eq:u:sigma} we obtain
		\[	\frac{u_\nu^{n+\gamma}}{\bar \sigma_\nu(\gamma)}= 1+\O(\dt^2)=\frac{u_\nu^{n+1}}{\sigma_\nu}+\O(\dt^2) \]
		using the same technique, and finally
		\begin{equation}
			\bunplusgamma=\b u^{n+1}+(\gamma-1)\dt \underbrace{\frac{\b u^{n+1}-\b u^n}{\dt}}_{\eqqcolon \b d^n} + \O(\dt^{3}).\label{eq:ugamman_bootstrap}
		\end{equation}
		Here $\b d^n_\gamma=\b d^n$ is independent of $\gamma$. 	We note that the PWDs for MPSSPRK2 are similar to the MPRK case, see Section~\ref{sec:MPSSPRK}. Indeed, one can show that \eqref{eq:ugamman_bootstrap} also holds for the second-order MPSSPRK family.
	\end{remark}
	While the scheme using \eqref{eq:DO_ansatz2}-\eqref{eq:sigma_MPRK22} is the motivation for the general formulation of our main result in the upcoming section, the scheme \eqref{eq:DO_ansatz2},\eqref{eq:sigma_denseMPRK2} will be the basis for the bootstrapping algorithm to obtain higher order, see Section~\ref{sec:bootstrap}.
	\paragraph{Main Result for Entropy Conservation and Positivity Preservation}
	We assume that the method can be written in the form
	\begin{equation*}
		\bunplusgamma = \b u^{n+1} + (\gamma-1)\dt \b d^n_\gamma(\dt) +\O(\dt^{p+1})
	\end{equation*}
	with $p\geq 2$ being the order of the baseline method and some suitable $\b d^n_\gamma(\dt)$ depending on the method. Now, since $\b d$ generally also depends on $\gamma$, we need the preparatory
	\begin{lemma}\label{lem:wgamma}
		Let $h\colon U\times V\to \R$, \begin{equation}
			h(\lambda,\gamma)\coloneqq \lambda-(\gamma-1)\dt \lVert \b d_\gamma^n(\dt)\rVert_2\label{eq:lambda}
		\end{equation} where $U\times V$ is an open neighborhood of $(0,1)$.

		If $\b d_\gamma^n(\dt)$ is a $\mathcal C^1$ map on $V$ w.r.t.\ $\gamma$ with $\lVert \b d_1^n(\dt)\rVert_2\neq 0$ for $\dt$ small enough, then there exist a neighborhood $\w U$ of $0$ and a continuous function $\w \gamma\colon \w U\to \R$ such that $h(\lambda,\w \gamma(\lambda))=0$ for all $\lambda\in \w U$ and $\dt>0$ small enough.
	\end{lemma}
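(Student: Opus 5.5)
We apply the implicit function theorem to $h$ at the point $(\lambda,\gamma)=(0,1)$, with $\dt>0$ fixed but small. The resulting implicit function $\w\gamma$ is $\mathcal C^1$, in particular continuous, on a neighbourhood $\w U$ of $0$.

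\textbf{Smoothness and base point.} First, $h$ is $\mathcal C^1$ on $U\times V$. The map $\gamma\mapsto \b d_\gamma^n(\dt)$ is $\mathcal C^1$ by assumption, and $\lVert\cdot\rVert_2$ is smooth away from the origin. By continuity of $\gamma\mapsto\b d^n_\gamma(\dt)$ and the assumption $\lVert \b d_1^n(\dt)\rVert_2\neq 0$, we may shrink $V$ to a neighbourhood of $1$ on which $\b d_\gamma^n(\dt)\neq\b 0$. There $\gamma\mapsto\lVert \b d_\gamma^n(\dt)\rVert_2$ is $\mathcal C^1$, with derivative $\frac{(\b d_\gamma^n)^T\partial_\gamma \b d_\gamma^n}{\lVert\b d_\gamma^n\rVert_2}$. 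The linear term in $\lambda$ is trivially smooth. At the base point we have $h(0,1)=0-0\cdot\dt\lVert\b d_1^n\rVert_2=0$.

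\textbf{Derivative with respect to $\gamma$.} The partial derivative is
\[
\partial_\gamma h(\lambda,\gamma)=-\dt\lVert \b d_\gamma^n(\dt)\rVert_2-(\gamma-1)\dt\,\partial_\gamma\lVert \b d_\gamma^n(\dt)\rVert_2 .
\]
At $\gamma=1$ the second term vanishes, so $\partial_\gamma h(0,1)=-\dt\lVert\b d_1^n(\dt)\rVert_2$. This is nonzero for every fixed small $\dt>0$, precisely by the hypothesis $\lVert \b d_1^n(\dt)\rVert_2\neq0$.

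\textbf{Conclusion.} The implicit function theorem now gives neighbourhoods $\w U\subset U$ of $0$ and $\w V\subset V$ of $1$, together with a unique $\mathcal C^1$ (hence continuous) function $\w\gamma\colon\w U\to\w V$, such that $h(\lambda,\w\gamma(\lambda))=0$ for all $\lambda\in\w U$. Moreover $\w\gamma(0)=1$.

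\textbf{Main obstacle.} The delicate point is the shrinking of $V$ needed to keep the norm differentiable. If we only know that $\b d^n_1(\dt)\neq \b 0$, then differentiability of the norm holds on some $\dt$-dependent neighbourhood. Since the lemma is stated for each fixed small $\dt$, this dependence is harmless. Note that $\w U$ may also depend on $\dt$; no uniformity in $\dt$ is claimed.

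If one wanted to avoid differentiating the norm, one could instead use the one-dimensional intermediate value theorem. The function $\gamma\mapsto(\gamma-1)\dt\lVert\b d_\gamma^n\rVert_2$ is continuous, vanishes at $\gamma=1$, and is strictly monotone near $1$ because its derivative there is $\dt\lVert \b d_1^n\rVert_2>0$. Its local inverse then supplies $\w\gamma$ directly, with $\w U$ the image of a small interval around $1$.
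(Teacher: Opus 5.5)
Your proof is correct and is essentially the paper's argument: check $h(0,1)=0$, compute $\partial_\gamma h(0,1)=-\dt\lVert \b d_1^n(\dt)\rVert_2\neq 0$, and apply the implicit function theorem for each fixed small $\dt>0$. You explicitly shrink $V$ so that $\b d_\gamma^n(\dt)\neq\b 0$, which makes $\gamma\mapsto\lVert\b d_\gamma^n(\dt)\rVert_2$ of class $\mathcal C^1$; the paper's derivative formula uses this tacitly, and in your optional monotonicity aside strict monotonicity needs the derivative positive on a whole neighbourhood of $1$ (which that same $\mathcal C^1$ property supplies), not just at $\gamma=1$.
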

	\begin{proof}
		We have $h(0,1)=0$ and
		\[\partial_\gamma h(\lambda,\gamma)=-\dt\lVert \b d_\gamma^n(\dt)\rVert_2-(\gamma-1)\dt \frac{(\partial_\gamma \b d_\gamma^n(\dt))^T \b d_\gamma^n(\dt)}{\lVert \b d_\gamma^n(\dt)\rVert_2}.\]
		Thus, $\partial_\gamma h(0,1)=-\dt \lVert \b d_1^n(\dt)\rVert_2\neq 0$ for $\dt$ small enough. The assertion then follows from the implicit function theorem.
	\end{proof}
	With that, we are positioned to prove the main theorem for the relaxation technique.
	\begin{theorem}\label{thm:pos_relax_1}
		In the situation of Lemma~\ref{lem:wgamma}, let
		\begin{equation*}
			\bunplusgamma = \b u^{n+1} + (\gamma-1)\dt \b d^n_\gamma(\dt) +\O(\dt^{p+1})
		\end{equation*} with $p\geq 2$ being the order of the baseline method and suppose $\eta\in \mathcal C^2$ with
		\[\eta'(\b u^{n+1})\cdot\frac{\b d^n_{\w \gamma(\dt \mu)}(\dt)}{\lVert \b d^n_{\w \gamma(\dt \mu)}(\dt)\rVert_2}= c(\b u^n,\mu) \dt +\O(\dt^2)\]
		for $\abs \mu$ small enough and \[\lim_{\mu\to 0}c(\b u^n,\mu)\eqqcolon\w c(\b u^n)\neq 0.\]
		Then the equation \[\eta(\bunplusgamma)-\etaold=0\]
		possesses a positive solution $\gamma$. Furthermore, if $\lVert\b d_\gamma^n(\dt)\rVert_2=\O(\dt^q)$, then there exists a unique positive solution satisfying $\gamma=1+\O(\dt^{p-1-q})$. 
	\end{theorem}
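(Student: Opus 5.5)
The idea is to mimic the proof of Theorem~\ref{thm:relax} (condition \eqref{eq:cond_eta_degen}), but with the new parameter $\lambda$ from Lemma~\ref{lem:wgamma} in place of $\gamma$. This straightens out the $\gamma$-dependence of the direction $\b d^n_\gamma$.

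\emph{Reparametrization.} By Lemma~\ref{lem:wgamma}, for $\lambda\in\w U$ we set $\gamma=\w\gamma(\lambda)$. Then $(\gamma-1)\dt\,\b d^n_\gamma=\lambda\,\b e(\lambda)$, where $\b e(\lambda)\coloneqq \b d^n_{\w\gamma(\lambda)}/\lVert \b d^n_{\w\gamma(\lambda)}\rVert_2$ is a unit vector. Hence
\[\b u^{n+\w\gamma(\lambda)}=\b u^{n+1}+\lambda\,\b e(\lambda)+\O(\dt^{p+1}).\]
We work with $\lambda=\dt\mu$ for $\mu$ in a bounded interval $[-M,M]$. Since $\w U$ is a fixed neighbourhood of $0$, all such $\lambda$ lie in $\w U$ once $\dt$ is small.

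\emph{Taylor expansion.} Define $R(\mu)\coloneqq\eta(\b u^{n+\w\gamma(\dt\mu)})-\etaold$. Since $\eta\in\mathcal C^2$ and $\lVert\b e\rVert_2=1$,
\[R(\mu)=\eta(\b u^{n+1})-\etaold+\dt\mu\,\eta'(\b u^{n+1})\b e(\dt\mu)+\O(\dt^2\mu^2)+\O(\dt^{p+1}).\]
In the conservative setting, $\eta(\b u^{n+1})-\etaold=\eta(\b u^{n+1})-\eta(\b u(t_{n+1}))=\O(\dt^{p+1})$ (local error, induction as in Section~\ref{sec:relax}). Using the hypothesis $\eta'(\b u^{n+1})\b e(\dt\mu)=c(\b u^n,\mu)\dt+\O(\dt^2)$, we get
\[R(\mu)=\dt^2\mu\, c(\b u^n,\mu)+\O(\dt^2\mu^2)+\O(\dt^3)+\O(\dt^{p+1}).\]
Since $p\ge2$, the last term is $\O(\dt^3)$. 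Dividing by $\dt^2$ gives $R(\mu)/\dt^2=\mu\, c(\b u^n,\mu)+\O(\mu^2)+\O(\dt)$.

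\emph{Sign change and positivity.} Since $c(\b u^n,\mu)\to\w c\neq0$, choose $\mu_0>0$ small enough that $\lvert c(\b u^n,\pm\mu_0)\rvert>\lvert\w c\rvert/2$ with the sign of $\w c$, and that the $\O(\mu_0^2)$ term is below $\mu_0\lvert\w c\rvert/4$. Then for $\dt$ small, $R(\mu_0)$ and $R(-\mu_0)$ have opposite signs. Continuity of $\mu\mapsto R(\mu)$ (from continuity of $\w\gamma$ and of the dense output in $\gamma$) and the intermediate value theorem give a root $\mu^*\in(-\mu_0,\mu_0)$. The corresponding $\gamma=\w\gamma(\dt\mu^*)$ is close to $\w\gamma(0)=1$ by continuity, hence positive. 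This is the \textbf{main obstacle}: the uniformity of the $\O$-terms in $\mu$ and $\dt$, and making sure the hypotheses on $c$ allow a $\mu_0$ independent of $\dt$. I would state explicitly that the constants in $c(\b u^n,\mu)+\O(\dt)$ are uniform for $\lvert\mu\rvert\le\mu_0$.

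\emph{Rate and uniqueness.} For the refined statement, with $\lVert\b d_\gamma^n\rVert_2=\O(\dt^q)$, I would sharpen the above estimate. The equation $R=0$ reads $\lambda\,(\w c\,\dt+\O(\dt^2)+\O(\lambda))=\O(\dt^{p+1})$. This forces $\lambda=\O(\dt^{p})$. Then from $\lambda=(\gamma-1)\dt\lVert\b d^n_\gamma\rVert_2$ and $\lVert\b d^n_\gamma\rVert_2$ of exact size $\dt^{q}$ (nondegenerate), we obtain $\gamma-1=\O(\dt^{p-1-q})$.

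For uniqueness, $\partial_\mu R/\dt^2=\w c+o(1)$ is bounded away from zero on $\lvert\mu\rvert\le\mu_0$, so $R$ is strictly monotone there. Combined with $\w\gamma$ being locally a bijection (implicit function theorem, $\partial_\gamma h\neq0$), the root is unique in the neighbourhood. If differentiability in $\mu$ is not available, I would instead argue monotonicity via the mean value form of the Taylor expansion.
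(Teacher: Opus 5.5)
Your proposal is correct and follows the paper's proof essentially step by step. You reparametrize via Lemma~\ref{lem:wgamma} with $\lambda=\dt\mu$, expand $\dt^{-2}\bigl(\eta(\b u^{n+\w\gamma(\dt\mu)})-\etaold\bigr)=\mu\, c(\b u^n,\mu)+\O(\mu^2)+\O(\dt)$ using $\etaold=\eta(\b u(t_n+\dt))$ for the local exact solution, and exploit $\w c\neq 0$. The paper instead closes with the implicit-function argument from the proof of \cite[Theorem~2]{CLM2010}, which gives existence, local uniqueness and $\mu=\O(\dt^{p-1})$ at once; your intermediate value theorem plus monotonicity argument is the one-dimensional version of that and needs the same uniformity and $\mathcal C^1$-regularity in $\mu$ that you flag. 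Your remark that $\lVert \b d^n_\gamma\rVert_2$ must be of exact order $\dt^q$ for the final rate is also what the paper's last line tacitly uses.
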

	\begin{proof}
		We set \[\b w_\gamma^n(\dt)\coloneqq \frac{\b d_\gamma^n(\dt)}{\lVert \b d_\gamma^n(\dt)\rVert_2}\]
		and follow the proof of \cite[Theorem~2]{CLM2010} by analyzing the function
		\begin{equation}\label{eq:z_def}
			z(\dt,\mu)\coloneqq\dt^{-2}\left(\eta\left(\b u^{n+\w \gamma(\dt \mu)}\right)-\etaold\right),\quad \dt\neq 0.
		\end{equation}
		The idea is to deduce that
		\begin{equation}\label{eq:z}
			z(\dt,\mu)
			=\mu c(\b u^n,\mu) +\frac{\mu^2}{2}(\b w^n_{\w \gamma(\dt\mu)}(\dt))^T H_\eta(\b u^{n+1})\b w^n_{\w \gamma(\dt\mu)}(\dt) +\O(\dt),
		\end{equation}
		where $H_\eta$ denotes the Hessian. Then, since $\w \gamma(0)=1$, we have
		\[z(0,\mu)\coloneqq \lim_{\dt\to 0}z(\dt,\mu)=\mu \w c(\b u^n)+\frac{\mu^2}{2}(\b w^n_1(0))^T H_\eta(\b u^n)\b w^n_1(0)\qta \partial_\mu z(0,0)=\w c(\b u^n)\neq 0.\]
		According to the proof of \cite[Theorem~2]{CLM2010}, there exist $\dt^*>0$ and a unique function $\mu:[0,\dt^*]\to \R$ such that $z(\dt,\mu(\dt))=0$ for all $0\leq\dt\leq \dt^*$. Indeed, because of
		\[z(\dt,0)=\dt^{-2}\left(\eta(\b u^{n+1})-\etaold\right)
		\] it can be deduced along the same lines that $\mu=\O(\dt^{p-1})$.

		To prove \eqref{eq:z} we first note that
		\begin{equation}
			\bunplusgamma=\b u^{n+1}+\lambda\b w_\gamma^n(\dt) +\O(\dt^{p+1}),\label{eq:scheme_w}
		\end{equation}
		where
		\begin{equation}\label{eq:lambda_proof}
			\lambda\coloneqq(\gamma-1)\dt \lVert \b d_\gamma^n(\dt)\rVert_2.
		\end{equation}
		Next, as $h(\lambda,\gamma)=0$ we can use Lemma~\ref{lem:wgamma} to solve \eqref{eq:lambda_proof} for $\gamma$ and plug it into \eqref{eq:scheme_w} resulting in a function of $\lambda$ only, \ie
		\[\b u^{n+\w \gamma(\lambda) }=\b u^{n+1}+\lambda\b w_{\w \gamma(\lambda)}^n(\dt) +\O(\dt^{p+1}).\]  In the following, we denote by $\b u(t)$ the exact local solution at $t$ satisfying $\b u(\tn)=\b u^n$ and recall that we are considering an entropy-conservative problem. Hence, with $\lambda\eqqcolon\dt\mu$ and the assumptions on $\eta'$ we receive
		\begin{equation*}
			\begin{aligned}
				z(\dt,\mu)&=\dt^{-2}\left(\eta(\b u^{n+\w \gamma(\dt\mu)}) - \eta(\b u(\tn+\dt)))\right)\\
				&=\dt^{-2}\left(\eta(\b u^{n+1}+\dt\mu\b w_{\w \gamma(\dt \mu)}^n(\dt)) - \eta(\b u(\tn+\dt)))\right) + \O(\dt^{p-1})\\
				&=\frac{\eta(\b u^{n+1})- \eta(\b u(\tn+\dt)) + \dt\mu \eta'(\b u^{n+1})\b w_{\w \gamma(\dt \mu)}^n(\dt)}{\dt^2} \\
				&\hphantom{==}+  \frac{\mu^2}{2}(\b w^n_{\w \gamma(\dt\mu)}(\dt))^T H_\eta(\b u^{n+1})\b w^n_{\w \gamma(\dt\mu)}(\dt)  +\O(\dt)\\
				&=	\mu c(\b u^n,\mu) + \frac{\mu^2}{2}(\b w^n_{\w \gamma(\dt\mu)}(\dt))^T H_\eta(\b u^{n+1})\b w^n_{\w \gamma(\dt\mu)}(\dt) +\O(\dt).
			\end{aligned}
		\end{equation*}
		Finally, since $\O(\dt^{p-1})=\mu=\frac{\lambda}{\dt}=(\gamma-1)\lVert \b d_\gamma^n(\dt)\rVert_2$ we deduce that $\gamma=1+\O(\dt^{p-1-q})$.
	\end{proof}
	Now if we are given such a solution $\gamma=1+\O(\dt^{p-1})$ we can deduce that the relaxation update is of order $p$ as the following lemma shows.
	\begin{lemma}\label{lem:order}
		If $\bunplusgamma=\b u^{n+1}+(\gamma-1)(\b u^{n+1}-\b u^n) + \O(\dt^{p+1})$ with a $p$-th order baseline method and $\gamma=1+\O(\dt^{p-1})$, then \[\bunplusgamma = \b u(\tn+\gamma\dt) +\O(\dt^{p+1}).\]
	\end{lemma}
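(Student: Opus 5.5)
The plan is to expand both sides around $\tn+\dt$ and compare them term by term, following the structure of the order proof for classical relaxation in \cite{ranocha2020general}. Write $\b u(t)$ for the exact local solution with $\b u(\tn)=\b u^n$. Since the baseline method has order $p$, $\b u^{n+1}=\b u(\tn+\dt)+\O(\dt^{p+1})$. Substituting this into the assumed form of $\bunplusgamma$ gives
\[
\bunplusgamma=\b u(\tn+\dt)+(\gamma-1)\bigl(\b u(\tn+\dt)-\b u^n\bigr)+\O(\dt^{p+1}) + (\gamma-1)\O(\dt^{p+1}).
\]
The last term is absorbed into $\O(\dt^{p+1})$ because $\gamma-1=\O(\dt^{p-1})$ is bounded.

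Next we need $\b u(\tn+\dt)-\b u^n=\dt\,\b f(\b u(\tn+\dt))+\O(\dt^2)$. This follows from Taylor expansion of the exact solution backwards from $\tn+\dt$, using smoothness of $\b f$ and of the solution. Multiplying by $\gamma-1=\O(\dt^{p-1})$, the $\O(\dt^2)$ remainder becomes $\O(\dt^{p+1})$. Hence
\[
\bunplusgamma=\b u(\tn+\dt)+(\gamma-1)\dt\,\b f(\b u(\tn+\dt))+\O(\dt^{p+1}).
\]

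On the other side, expand $\b u(\tn+\gamma\dt)=\b u(\tn+\dt+(\gamma-1)\dt)$ around $\tn+\dt$:
\[
\b u(\tn+\gamma\dt)=\b u(\tn+\dt)+(\gamma-1)\dt\,\b u'(\tn+\dt)+\O\bigl(((\gamma-1)\dt)^2\bigr).
\]
Here $\b u'=\b f(\b u)$, and the remainder is $\O(\dt^{2p})\subseteq\O(\dt^{p+1})$ since $p\ge 2$. Subtracting the two expansions yields the claim.

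The only point requiring care is uniformity. The $\O$-constants must not depend on $\gamma$, and the Taylor expansion requires the exact solution to exist on a neighborhood of $[\tn,\tn+\gamma\dt]$. Since $\gamma\to1$ as $\dt\to0$, $\gamma$ stays in a compact interval around $1$ for $\dt$ small enough. Together with local boundedness of $\b f$ and its derivatives near $\b u^n$, this makes all remainder constants uniform. I expect this bookkeeping to be the only mild obstacle; the rest is a direct expansion.
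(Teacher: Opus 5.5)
Your proof is correct and is essentially the paper's argument. The paper observes that $\b u^{n+1}+(\gamma-1)(\b u^{n+1}-\b u^n)=\b u^n+\gamma(\b u^{n+1}-\b u^n)=\ugamman$ is the classical relaxation update, and it cites \cite[Lemma~2.7]{ranocha2020general} for $\ugamman=\b u(\tn+\gamma\dt)+\O(\dt^{p+1})$ with the extra $\O(\dt^{p+1})$ perturbation carried along; your Taylor expansion around $\tn+\dt$, using $(\gamma-1)\cdot\O(\dt^2)=\O(\dt^{p+1})$ and $((\gamma-1)\dt)^2=\O(\dt^{2p})$, is a self-contained proof of that cited lemma.
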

	\begin{proof}
		This is just Lemma~\cite[Lemma~2.7]{ranocha2020general}, where the relaxation method is perturbed by an additive error of $\O(\dt^{p+1})$.
	\end{proof}
	\paragraph{Bootstrapping Algorithm for Positivity-Preserving Relaxation}\label{sec:bootstrap}
	The main idea for generalizing the relaxation technique to higher order is to use the observation from Remark~\ref{rem:dgamma_using_sigma(gamma)}, where
	\[\bunplusgamma=\b u^{n+1}+(\gamma-1)\dt \underbrace{\frac{\b u^{n+1}-\b u^n}{\dt}}_{\eqqcolon \b d^n} + \O(\dt^{p+1}).\]
	Since $\b d^n$ now is independent of $\gamma$, there is no need for Lemma~\ref{lem:wgamma} as we have an explicit expression for $\w \gamma$, \ie $\w \gamma(\lambda)=\frac{\lambda +\dt \lVert \b d^n\rVert_2}{\dt \lVert \b d^n\rVert_2}$ and we can apply Theorem~\ref{thm:pos_relax_1} giving us $\gamma=1+\O(\dt^{p-1})$. Also, Remark~\ref{remark:issue_denseoutput_relaxation} motivates us to start a bootstrapping algorithm using the functions $\bar b_j(\gamma)=\gamma b_j$ also for higher order. This seems to contradict our results from the theory on dense output, but in combination with relaxation, the issue can be resolved, see Remark~\ref{rem:bootstrap_vs_dense} below.


	Now in view of the following lemma, we can bootstrap the relaxation technique to higher orders.
	\begin{lemma}\label{lem:bootstrap}
		Consider a scheme of the form \eqref{eq:DO_ansatz2} with $\bar b_j(\gamma)=\gamma b_j$ and
		\begin{equation}\label{eq:bsigma_proof}
			\begin{aligned}
				\bar\bsigma(\gamma)=\b u(\tn+\dt) +(\gamma-1)(\b u(\tn+\dt)-\b u(\tn)) +\O(\dt^p)
			\end{aligned}
		\end{equation} for all $\gamma$ in a neighborhood $V$ of $1$.
		Then \[	\frac{u_\nu^{n+\gamma}}{\bar \sigma_\nu(\gamma)}= 1+\O(\dt^p)=\frac{u_\nu^{n+1}}{\sigma_\nu}+\O(\dt^p), \]
		and in particular,
		\[\bunplusgamma=\b u^{n+1}+(\gamma-1)(\b u^{n+1}-\b u^n) + \O(\dt^{p+1})\]
		for all $\gamma\in V$.

	\end{lemma}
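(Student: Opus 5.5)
The plan is to exploit the linear structure of \eqref{eq:DO_ansatz2} with $\bar b_j(\gamma)=\gamma b_j$. Writing $\bar\bsigma(\gamma)$ for the PWDs, the dense output solves $\b M_\gamma \bunplusgamma = \b u^n + \gamma\dt\sum_j b_j \b r^P(\buj)$, where $\b M_\gamma$ is built as in \eqref{eq:M_update} with $\sigma$ replaced by $\bar\sigma(\gamma)$ and the factor $\dt$ replaced by $\gamma\dt$. I will first show the ratio estimate $u_\nu^{n+\gamma}/\bar\sigma_\nu(\gamma)=1+\O(\dt^p)$, and then insert it into \eqref{eq:DO_ansatz2} exactly as in the proof of Lemma~\ref{lem:dgamma}.

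\textbf{Step 1: a reference quantity.} Let $\b v(\gamma)\coloneqq \b u^n+\gamma(\b u^{n+1}-\b u^n)$. Since the baseline MPRK method has order $p$ and $\b u^{n+1}/\bsigma = \b 1 + \O(\dt^p)$ (cf.~\cite{NSARK,KM2019}), we have $\b u^{n+1}=\b u(\tn+\dt)+\O(\dt^{p+1})$. Assumption \eqref{eq:bsigma_proof} then gives $\bar\bsigma(\gamma)=\b v(\gamma)+\O(\dt^p)$, so $\b v(\gamma)/\bar\bsigma(\gamma)=\b 1+\O(\dt^p)$ componentwise. This uses positivity of $\b u^n$, so that all denominators are bounded away from zero for $\gamma\in V$ and small $\dt$.

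\textbf{Step 2: a perturbation argument for the linear system.} Plug $\b v(\gamma)$ into \eqref{eq:DO_ansatz2} in place of $\bunplusgamma$ on the right-hand side. Because the weights $v_\nu/\bar\sigma_\nu = 1+\O(\dt^p)$ multiply terms already carrying a factor $\gamma\dt$, we obtain
\[u_k^n+\gamma\dt\sum_j b_j\bigl(\dots\bigr)\big|_{\text{weights}=1}+\O(\dt^{p+1}).\]
The unweighted sum equals $\gamma\dt\sum_j b_j f_k(\buj)$. Using $u^{n+1}_k/\sigma_k=1+\O(\dt^p)$ again, this in turn equals $\gamma(u^{n+1}_k-u^n_k)+\O(\dt^{p+1})$. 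Hence $\b v(\gamma)$ satisfies the dense output system up to a residual of $\O(\dt^{p+1})$, that is, $\b M_\gamma \b v(\gamma) = \text{rhs} + \O(\dt^{p+1})$.

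Since $\b M_\gamma=\b I+\O(\dt)$, the matrix $\b M_\gamma^{-1}$ is uniformly bounded for small $\dt$ and $\gamma\in V$. It follows that $\bunplusgamma - \b v(\gamma) = \O(\dt^{p+1})$, which is the second claim. Dividing by $\bar\bsigma(\gamma)$ and using Step 1 then yields $u_\nu^{n+\gamma}/\bar\sigma_\nu(\gamma)=1+\O(\dt^p)$. The equality with $u_\nu^{n+1}/\sigma_\nu+\O(\dt^p)$ follows from the same baseline fact.

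\textbf{Main obstacle.} The delicate point is the uniformity of the $\O$-terms in $\gamma\in V$. The constants in \eqref{eq:bsigma_proof} and in the bound on $\b M_\gamma^{-1}$ must not depend on $\gamma$. I would restrict $V$ to a compact neighborhood of $1$ and rely on the continuity of all quantities in $\gamma$. A second subtlety is that $\b u^{n+1}/\bsigma=\b1+\O(\dt^p)$ must hold for the baseline method; if it is not available directly, I would instead derive it as the special case $\gamma=1$ of the same residual argument, using $\bar\bsigma(1)=\bsigma$.
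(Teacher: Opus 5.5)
Your proof is correct, but it is organized differently from the paper's.

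The paper argues by induction on the exponent $q=1,\dotsc,p$. The a priori bound of \cite[Lemma~4.6]{IzginThesis} gives $u_\nu^{n+\gamma}/\bar\sigma_\nu(\gamma)=\O(1)$, and one substitution turns this into $1+\O(\dt)$. Each induction step then substitutes the current ratio estimate $1+\O(\dt^{q-1})$ for the \emph{actual} solution back into \eqref{eq:DO_ansatz2}. The factor $\dt$ upgrades this to $\bunplusgamma=\b u^{n+1}+(\gamma-1)(\b u^{n+1}-\b u^n)+\O(\dt^{q})$, and comparison with \eqref{eq:bsigma_proof} improves the ratio to $1+\O(\dt^{q})$. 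One last substitution at $q=p$ gives the $\O(\dt^{p+1})$ statement.

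You instead insert the explicit candidate $\b v(\gamma)=\b u^n+\gamma(\b u^{n+1}-\b u^n)$ into the linear system and show in one step that its residual is $\O(\dt^{p+1})$. You then close with a uniform bound on $\b M_\gamma^{-1}$, so the update estimate comes first and the ratio estimate follows as a corollary.

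\textbf{What each route buys.} Yours is a consistency-plus-stability argument: it replaces the $p$-fold bootstrap by a single estimate and makes explicit where stability of the linear solve enters. A Neumann series from $\b M_\gamma=\b I+\O(\dt)$ suffices. For $b_j\geq 0$ and $\gamma>0$ one even has $\lVert\b M_\gamma^{-1}\rVert_1\leq 1$, since $\b M_\gamma$ is then a strictly column diagonally dominant M-matrix. The paper's induction never inverts $\b M_\gamma$ explicitly; it delegates stability to the cited a priori $\O(1)$ bound. In exchange, it mirrors the order-by-order mechanism of the bootstrapping construction and of the classical MPRK order proofs.

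\textbf{A minor remark on your fallback.} At $\gamma=1$ the residual argument is vacuous, because $\b v(1)=\b u^{n+1}$ solves the system exactly. What actually delivers $u^{n+1}_\nu/\sigma_\nu=1+\O(\dt^p)$ is your Step~1 evaluated at $\gamma=1$. This requires $\bar\bsigma(1)=\bsigma$, which holds in the bootstrapping construction.
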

	\begin{remark}\label{rem:bootstrap_vs_dense}
		Before we prove this lemma, we want to stress two things. First, using \eqref{eq:bsigma_proof} with $\bar b_j(\gamma)=\gamma b_j$ does not result in a higher-order dense output formula, but only guarantees to obtain the desired order at the root $\gamma$ of $\eta(\bunplusgamma)=\etaold$, see Theorem~\ref{thm:pos_relax_1} and Lemma~\ref{lem:order}.

		Secondly, the bootstrapping process consists of using $\bunplusgamma$ from the scheme of order $p-1$ as the new $\bar \bsigma(\gamma)$ resulting in a new method of order $p$. We can start the bootstrapping process by using the second-order MPRK22($\alpha$) scheme as a baseline method together with \eqref{eq:sigma_denseMPRK2}. Note that this naturally results in nested functions that depend on $\gamma$, which should be kept in mind when implementing the Newton iteration.
	\end{remark}

	\begin{proof}[Proof of Lemma~\ref{lem:bootstrap}]
		We prove this claim by induction over $q=1,\dotsc,p$ and exploit \cite[Lemma~4.6,Lemma~4.8]{IzginThesis} to justify the implication
		\[\bunplusgamma=\bar \bsigma(\gamma)+\O(\dt^q) \Longrightarrow 	\frac{u_\nu^{n+\gamma}}{\bar \sigma_\nu(\gamma)}= 1+\O(\dt^{q}),\quad \nu=1,\dotsc,N.\]

		For $q=1$ we find $\bunplusgamma=\b u^n+\O(\dt)$ and $\bar\bsigma(\gamma)=\b u^n+\O(\dt)$ since $	\frac{u_\nu^{n+\gamma}}{\bar \sigma_\nu(\gamma)}= \O(1)$ due to \cite[Lemma~4.6]{IzginThesis}.

		Now suppose that \eqref{eq:bsigma_proof} holds with $\O(\dt^{q})$ and some $q\geq 2$. By the induction hypothesis we have
		\[	\frac{u_\nu^{n+\gamma}}{\bar \sigma_\nu(\gamma)}= 1+\O(\dt^{q-1})=\frac{u_\nu^{n+1}}{\sigma_\nu}+\O(\dt^{q-1}) \] where we used
		$\frac{u_\nu^{n+1}}{\sigma_\nu}=1+\O(\dt^p)$ and $p\geq q$ for the last equality, see \eg \cite[Lemma~16]{NSARK}. Substituting this into \eqref{eq:DO_ansatz2}, we see
		\[\bunplusgamma=\b u^{n+1}+(\gamma-1)(\b u^{n+1}-\b u^n) + \O(\dt^{q}).\] Finally, since \eqref{eq:bsigma_proof} holds with $\O(\dt^{q})$, we deduce
		\[	\frac{u_\nu^{n+\gamma}}{\bar \sigma_\nu(\gamma)}= 1+\O(\dt^{q})=\frac{u_\nu^{n+1}}{\sigma_\nu}+\O(\dt^{q}).\]
	\end{proof}
	\begin{example}[Third-Order Relaxation for Conservative Problems using MPRK]
		Looking at  the third-order MPRK family from Example~\ref{example:MPRK43I}, the relaxation scheme is fully defined by setting
		\begin{equation}\label{eq:sigma_gamma_for_MPRK43I}
			\begin{aligned}
				\bar\sigma_k(\gamma)=\, &u_k^n+ \gamma\dt \sum_{j=1}^2\beta_j\left(r_k^P(\buj) + \sum_{\nu=1}^d  p_{k\nu}(\buj)\frac{\bar\sigma_\nu(\gamma)}{(u_\nu^{(2)})^{\frac{\gamma}{a_{21}}}(u_\nu^n)^{1-\frac{\gamma}{a_{21}}}}\right. \\
				& \hspace{2cm}- \left.\left(r_k^D(\buj) + \sum_{\nu=1}^d d_{k\nu}(\buj)\right)\frac{\bar\sigma_k(\gamma)}{(u_k^{(2)})^{\frac{\gamma}{a_{21}}}(u_k^n)^{1-\frac{\gamma}{a_{21}}}}\right)
			\end{aligned}
		\end{equation}
		for $k=1,\dotsc,d$, $\beta_1=1-\beta_2$, and $\beta_2=\frac{1}{2a_{21}}$.
	\end{example}

	\paragraph{Applying Newton's Method}
	As an illustrative example, we focus on \eqref{eq:DO_ansatz2} with $\bar b_j(\gamma)=\gamma b_j$, \ie
	\begin{equation}
		\begin{aligned}
			\unplusgamma_k=u_k^n &+ \dt\gamma\sum_{j=1}^s b_j\left(r_k^P(\b u^{(j)}) + \sum_{\nu=1}^dp_{k\nu}(\b u^{(j)}) \frac{u_\nu^{n+\gamma}}{\bar\sigma_\nu(\gamma)}-\left( r^D_k(\b u^{(j)}) + \sum_{\nu=1}^dd_{k\nu}(\b u^{(j)})\right)\frac{u_k^{n+\gamma}}{\bar \sigma_k(\gamma)}\right)\\
		\end{aligned},\label{eq:unplusgamma_bootstrap}
	\end{equation}
	and that
	\begin{equation}\label{eq:M_update_bootstrap}
		\begin{aligned}
			(\b M_\gamma)_{kk}&= 1+\gamma\dt \sum_{j=1}^sb_j\left(r_k^D(\b u^{(j)}) +\sum_{\nu=1}^dd_{k\nu}(\b u^{(j)}) \right)\frac{1}{\bar \sigma_k(\gamma)}, \\
			(\b M_\gamma)_{k\nu}&= -\gamma\dt \sum_{j=1}^sb_jp_{k\nu}(\b u^{(j)})\frac{1}{\bar\sigma_\nu(\gamma)}, \quad k\neq \nu.
		\end{aligned}
	\end{equation}
	in \eqref{eq:System_Relaxation_MPRK_step}.

		Starting with \eqref{eq:unplusgamma_bootstrap} we deduce
		\begin{equation*}
			\begin{aligned}
				\frac{\dd }{\dd \gamma} 	\unplusgamma_k= \dt\sum_{j=1}^s\bar b_j'(\gamma)&\left(r_k^P(\b u^{(j)}) + \sum_{\nu=1}^dp_{k\nu}(\b u^{(j)}) \frac{u_\nu^{n+\gamma}}{\bar{\sigma}_\nu(\gamma)}-\left( r^D_k(\b u^{(j)}) + \sum_{\nu=1}^dd_{k\nu}(\b u^{(j)})\right)\frac{u_k^{n+\gamma}}{\bar{\sigma}_k(\gamma)}\right)\\
				+ \dt\sum_{j=1}^s\bar b_j(\gamma&)\left(\sum_{\nu=1}^dp_{k\nu}(\b u^{(j)}) \left(\frac{\frac{\dd }{\dd \gamma}u_\nu^{n+\gamma}}{\bar{\sigma}_\nu(\gamma)}-\frac{u_\nu^{n+\gamma}\bar{\sigma}_\nu'(\gamma)}{(\bar{\sigma}_\nu(\gamma))^2}\right)\right.\\
				&\left.-\left( r^D_k(\b u^{(j)}) + \sum_{\nu=1}^dd_{k\nu}(\b u^{(j)})\right) \left(\frac{\frac{\dd }{\dd \gamma}u_k^{n+\gamma}}{\bar{\sigma}_k(\gamma)}-\frac{u_k^{n+\gamma}\bar{\sigma}_k '(\gamma)}{(\bar{\sigma}_k(\gamma))^2}\right)\right).
			\end{aligned}
		\end{equation*}
		To rewrite this in matrix-vector notation, we 
		denote by  \enquote{$\oslash$} and \enquote{$\odot$} the component-wise division and multiplication (Hadamard division and product), respectively. Then, using
		\[\b v(\gamma)\coloneqq \bunplusgamma \odot \bar \bsigma'(\gamma) \oslash (\bar \bsigma(\gamma)),\]
		we end up with
		\begin{equation}\label{eq:ungamma_prime_general}
			\begin{aligned}
				\b M_\gamma(\b u^n)\frac{\dd }{\dd \gamma} 	\bunplusgamma =& \frac{1}{\gamma}(\bunplusgamma-\b u^n)	+ (\b M_\gamma(\b u^n) - \b I)  \b v(\gamma).
			\end{aligned}
		\end{equation}
		Note that if $\bar \bsigma(\gamma)=\bsigma$, then $\b v(\gamma)=\b 0$. Also note that the derivative of $\bar\bsigma$ from \eqref{eq:sigma_gamma_for_MPRK43I} itself satisfies an analogue equation to \eqref{eq:ungamma_prime_general} as it represents an MPRK22($\alpha$) relaxation method of order $2$.

		Also, the system for MPSSPRK22 is the same; one only has to plug in the expressions $\b M_\gamma$ and $\bar\bsigma$ for that particular scheme.

		\section{Numerical Experiments}\label{sec:numexp}
		In this section we apply our new relaxation algorithm to dissipative and conservative problems to validate our theoretical findings and to experimentally test the constraints on $\dt$ for solving the system \eqref{eq:System_Relaxation_MPRK_step}. We note that we use Newton's method for the computation of $\gamma$, if not stated otherwise. Also, we use \eqref{eq:sigma_MPRK22} as default for MPRK22$(\alpha$). Also, we may use a PID controller with parameters $\beta_1=0.7$, $\beta_2=0.4$, and $\beta_3=0$, see \cite{IR2023} for more details. The resulting method is denoted by MPRK22adap. We note that our implementation of the relaxation algorithm, which can be found in our reproducibility repository \cite{IRS2026repository}, is also adaptive in the sense that successful relaxation steps increase the $\Delta t$ by 1\% while unsuccessful searches for $\gamma$ result in a 10\% decrease of the time step size. We refer to the repository \cite{IRS2026repository} for the implemented abortion criteria.

		\subsection{Lotka-Volterra System}

		The classical Lotka-Volterra system
		\begin{equation*}
			\begin{aligned}
				u_1'(t) &= 2u_1(t) - u_1(t) u_2(t), d\\
				u_2'(t) &= u_1(t) u_2(t) - u_2(t),\quad \b u(0)=(2,2)^T
			\end{aligned}
		\end{equation*}
		can be written as a non-conservative PDRS with
		\begin{equation*}
			r_1^P = 2 u_1, \quad p_{21} = u_1 u_2 = d_{12}, \quad r_2^D = u_2.
		\end{equation*}
		The entropy
		\begin{equation*}
			\eta(\b u) = \log(u_1) - u_1 + 2\log(u_2) - u_2
		\end{equation*}
		is conserved. Since the Lotka-Volterra system has periodic orbits,
		we expect improved numerical results using relaxation to conserve
		the entropy \cite{cano1997error,cano1998error,calvo2011error}. 
		\begin{figure}[!htbp]
			\begin{subfigure}[t]{0.5\textwidth}
				\includegraphics[width=\textwidth]{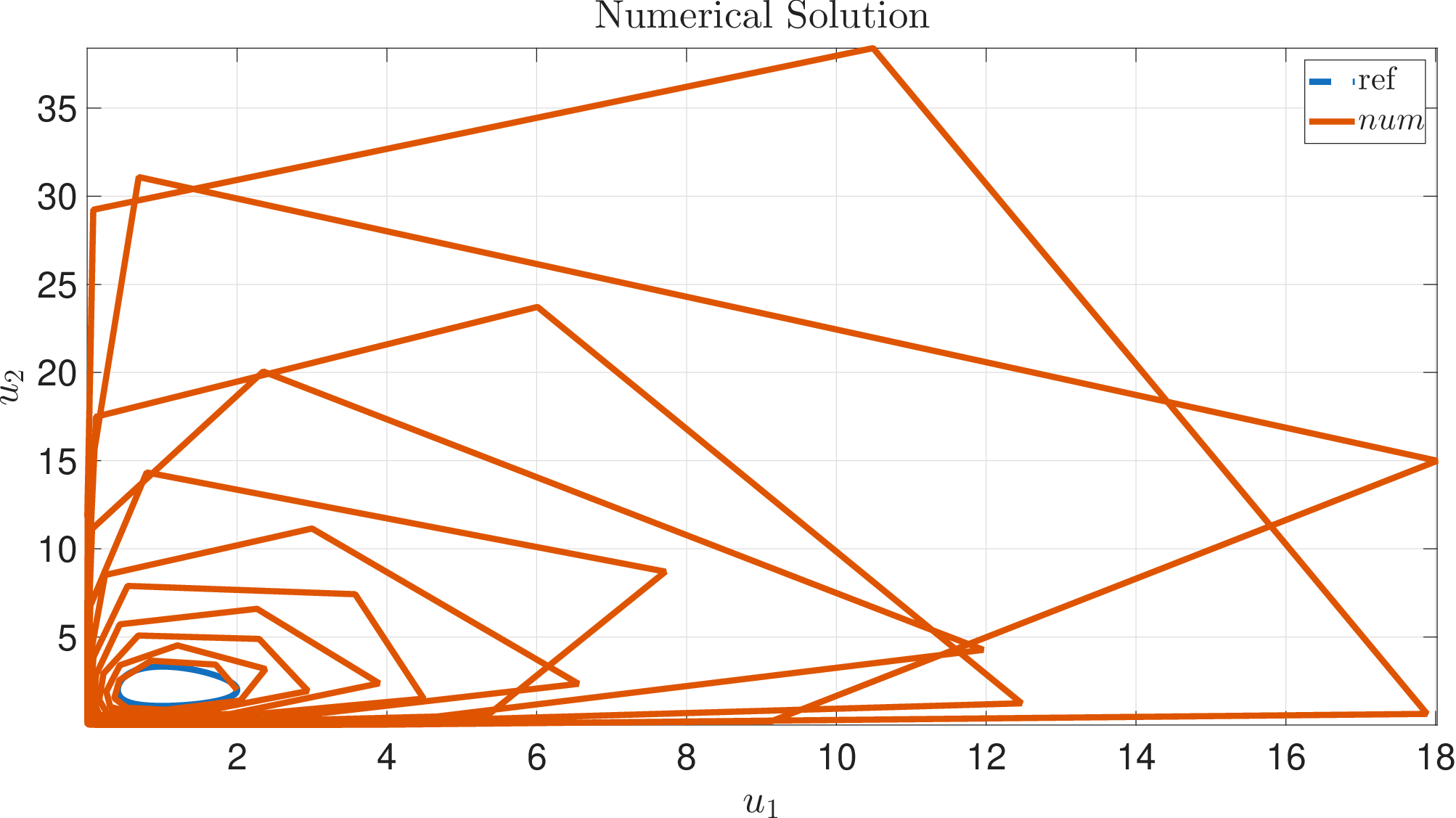}
			\end{subfigure}
			\begin{subfigure}[t]{0.5\textwidth}
				\includegraphics[width=\textwidth]{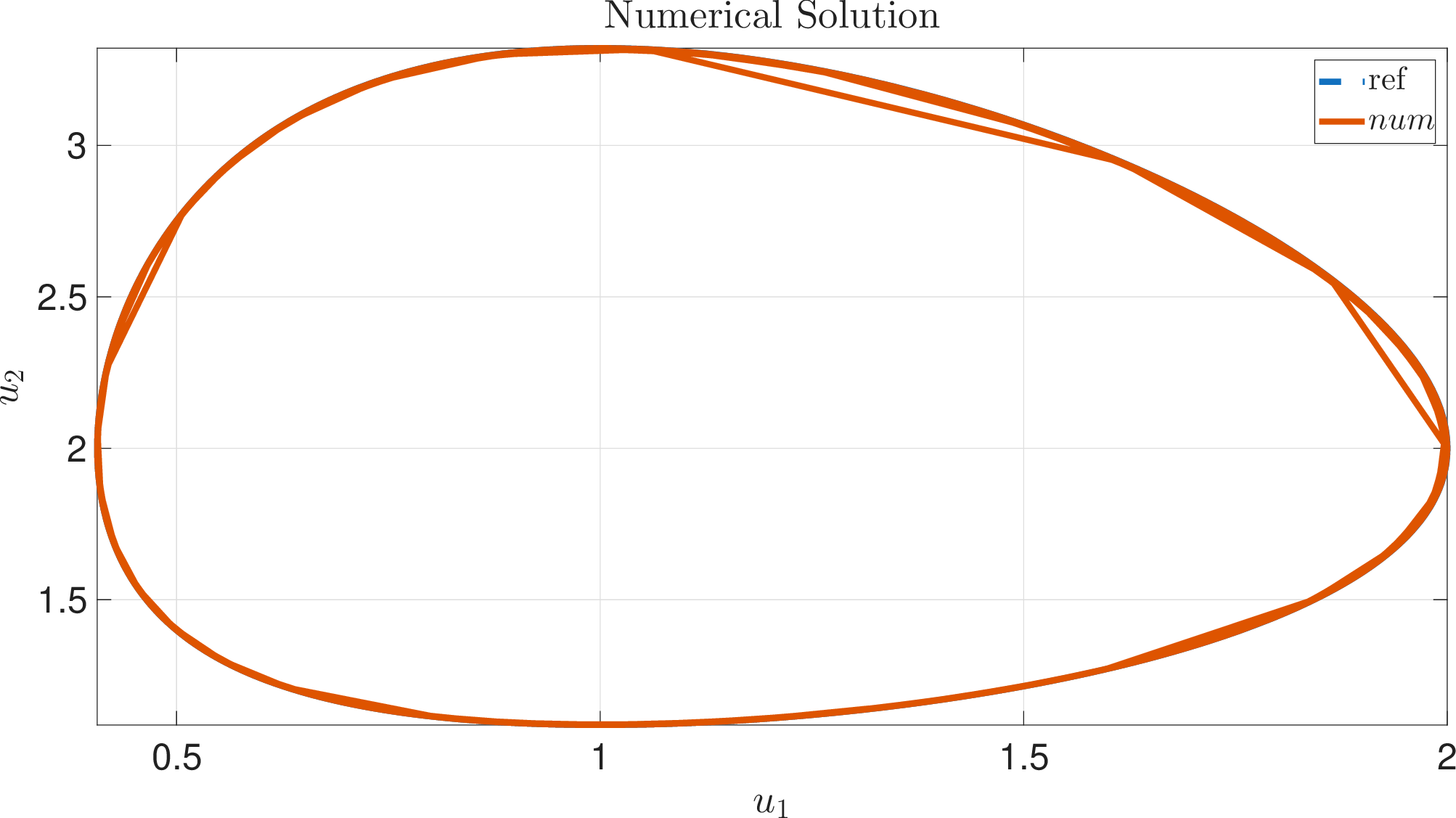}
			\end{subfigure}\\
			\begin{subfigure}[t]{0.48\textwidth}
				\includegraphics[width=\textwidth]{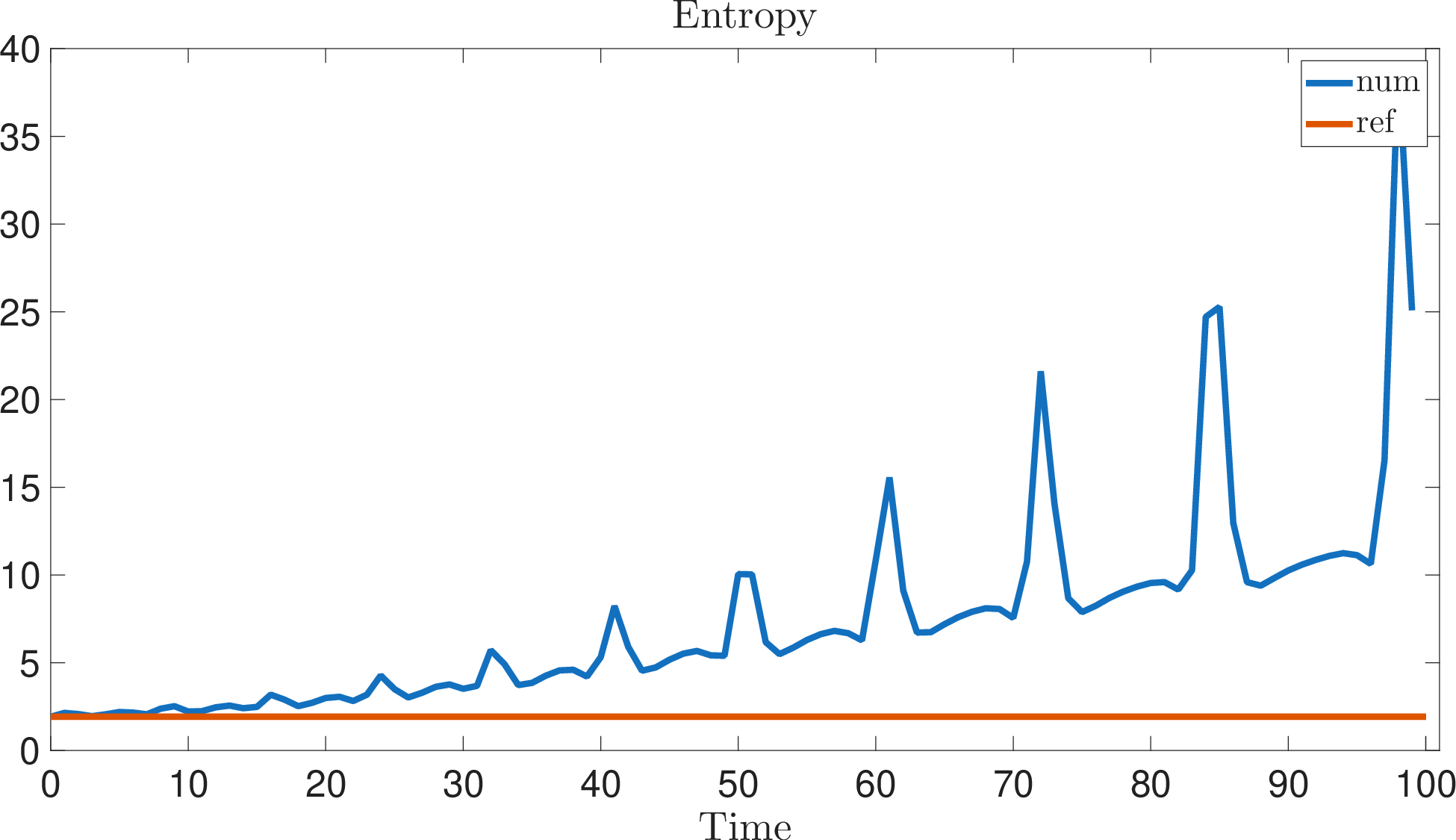}
			\end{subfigure}
			\begin{subfigure}[t]{0.495\textwidth}
				\includegraphics[width=\textwidth]{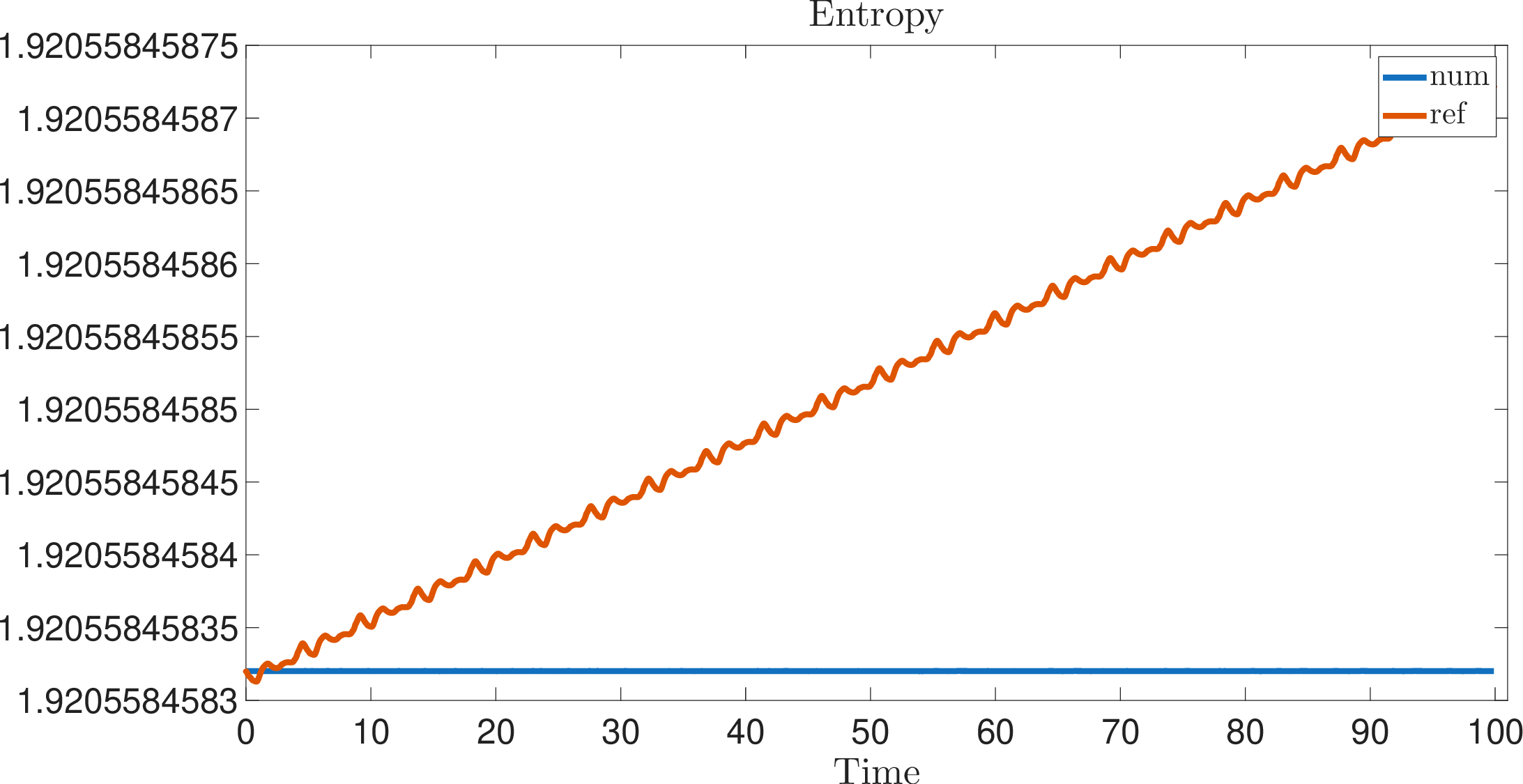}
			\end{subfigure}
			\begin{subfigure}[t]{0.495\textwidth}
				\includegraphics[width=\textwidth]{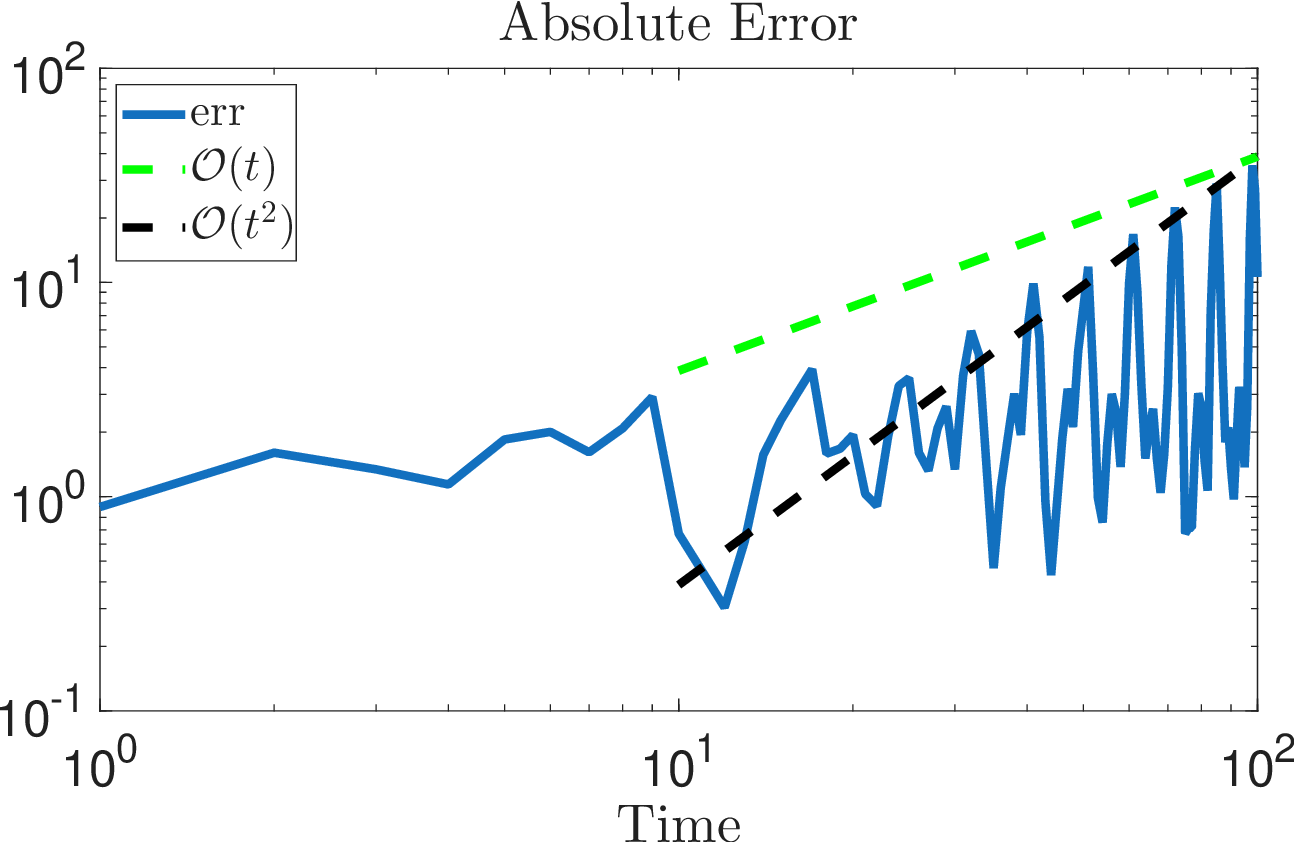}
			\end{subfigure}
			\begin{subfigure}[t]{0.495\textwidth}
				\includegraphics[width=\textwidth]{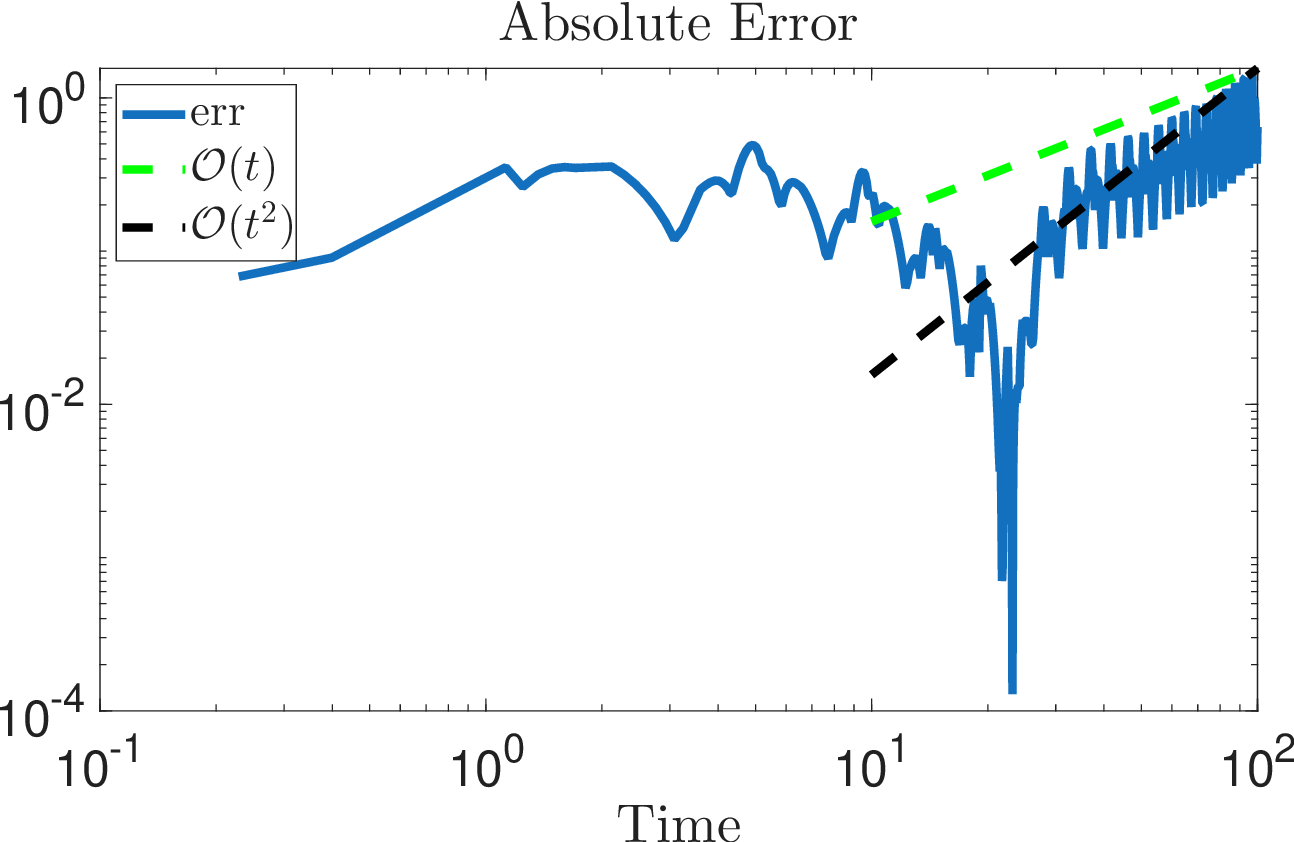}
			\end{subfigure}
			\caption{Numerical solution of Lotka Volterra problem using MPRK22(1) (top) and $\dt=1$. The error is $\text{err}^n=\max(\lvert u_1^n-u_1^{\text{ref},n}\rvert, \lvert u_2^n-u_2^{\text{ref},n}\rvert)$.  Left: without relaxation. Right: with relaxation. }\label{Fig:MPRK22_LV}
		\end{figure}Although there are only positivity constraints, $\eta$ is not non-decreasing for all $\b u>\b 0$ in this example, which is why we use the default relaxation algorithm.
	As expected, we observe that relaxation improves the error growth of the base method from quadratic to linear,
	see Figure~\ref{Fig:MPRK22_LV}.
		\subsection{Stratospheric Reaction Problem}
		The stratospheric reaction problem \cite{sandu2001positive} is a stiff system of ODEs $\b w'(t)=\b f(t,\b w(t))$ describing the interaction of the constituents $\b w=(w_1,\dotsc,w_6)=(O^{1D},O,O_3,O_2,NO,NO_2)$. This non-conservative PDS possesses two linear invariants determined by the vectors $\widetilde{\b n}_1=(1,1,3,2,1,2)^T$ and $\widetilde{\b n}_2=(0,0,0,0,1,1)^T$. In order to apply MPRK schemes to this problem, we scale the corresponding differential equations writing
		\[\diag(\widetilde{\b n}_1)\b w'(t)=\diag(\widetilde{\b n_1})\b f(t,\diag(\widetilde{\b n}_1)^{-1}\diag(\widetilde{\b n}_1)\b w(t)).\] Hence, introducing $\b u(t)=\diag(\widetilde{\b n}_1)\b w(t)$, the two linear invariants of the differential equations $\b u'(t)=\b f(t,\diag(\widetilde{\b n}_1)^{-1}\b u(t))$ are $\b n_1=(1,1,1,1,1,1)^T$ and $\b n_2=(0,0,0,0,1,\tfrac12)^T$. Moreover, the scaled system takes the form
		\begin{equation}\label{eq:strat}
			\begin{aligned}
				u_1'&=
				\tfrac13 r_5(t, \b u) - (r_6(\b u) + \tfrac13 r_7(\b u)), \\
				u_2'&= r_1(t, \b u) + \tfrac13 r_3(t, \b u) + r_6(\b u)+\tfrac12 r_{10}(t, \b u)  - (\tfrac12 r_2(\b u) + \tfrac13 r_4(\b u) + \tfrac12 r_9(\b u)+r_{11}(\b u)),\\
				u_3'&=\tfrac32 r_2(\b u) - (r_3(t, \b u) + r_4(\b u) + r_5(t, \b u)  + r_7(\b u) + r_8(\b u)),\\
				u_4'&=\tfrac23 r_3(t, \b u) + \tfrac43 r_4(\b u) + \tfrac23 r_5(t, \b u)  + \tfrac43 r_7(\b u) + \tfrac23 r_8(\b u) +r_9(\b u) - (r_1(t, \b u)+r_2(\b u)),\\
				u_5'&=\tfrac12 r_9(\b u)+\tfrac12 r_{10}(t, \b u) - (\tfrac13 r_8(\b u)r_{11}(\b u)),\\
				u_6'&=\tfrac23 r_8(\b u) + 2 r_{11}(\b u) - (r_9(\b u) + r_{10}(t, \b u)),
			\end{aligned}
		\end{equation}
		where
		\begin{equation*}
			\begin{aligned}
				r_1(t, \b u)&=k_1(t)u_4, &&& k_1(t)&=\sigma(T(t))^3\cdot 2.643\cdot 10^{-10}, \\
				r_2(t, \b u)&=k_2u_2u_4, &&& k_2&=8.018\cdot 10^{-17}, \\
				r_3(t, \b u)&=k_3(t)u_3, &&& k_3(t)&=\sigma(T(t))\cdot 6.120\cdot 10^{-4}, \\
				r_4(t, \b u)&=k_4u_2u_3, &&& k_4&=1.576\cdot 10^{-15}, \\
				r_5(t, \b u)&=k_5(t)u_3, &&& k_5(t)&=\sigma(T(t))^2\cdot 1.070\cdot 10^{-3}, \\
				r_6(t, \b u)&=k_6Mu_1, &&& k_6&=7.110\cdot 10^{-11}, &M&=8.120\cdot 10^{16},\\
				r_7(t, \b u)&=k_7u_1u_3, &&& k_7&=1.200\cdot 10^{-10}, \\
				r_8(t, \b u)&=k_8u_3u_5, &&& k_8&=6.062\cdot 10^{-15}, \\
				r_9(t, \b u)&=k_9u_2u_6, &&& k_9&=1.069\cdot 10^{-11}, \\
				r_{10}(t, \b u)&=k_{10}(t)u_6, &&& k_{10}(t)&=\sigma(T(t))\cdot 1.289\cdot 10^{-2}, \\
				r_{11}(t, \b u)&=k_{11}u_2u_5, &&& k_{11}&= 10^{-8}
			\end{aligned}
		\end{equation*}
		as well as
		\begin{equation*}
			\begin{aligned}
				\sigma(T(t))&= \begin{cases}
					0.5+0.5\cos\left(\pi \abs{\frac{2T(t)-T_r-T_2}{T_s-T_r}}\frac{2T(t)-T_r-T_2}{T_s-T_r}\right), & T_r\leq T(t)\leq T_s,\\
					0,& \text{otherwise},
				\end{cases}\\
				T(t)&=\frac{t}{3600}\mod 24, \quad T_r=4.5,\quad T_s=19.5.
			\end{aligned}
		\end{equation*}
		The non-zero production and destruction terms of the system \eqref{eq:strat} are given by
		\begin{equation*}
			\begin{aligned}
				d_{12}(t, \b u) &= r_6(\b u), &
				d_{14}(t, \b u) &= \tfrac13 r_7(\b u),&
				d_{23}(t, \b u) &= \tfrac12 r_2(\b u),\\
				d_{24}(t, \b u) &= \tfrac13 r_4(\b u),&
				d_{25}(t, \b u) &= \tfrac12 r_9(\b u),&
				d_{26}(t, \b u) &= r_{11}(\b u),\\
				d_{31}(t, \b u) &= \tfrac13 r_5(t, \b u),&
				d_{32}(t, \b u) &= \tfrac13  r_3(t, \b u),&
				d_{36}(t, \b u) &= \tfrac13 r_8(\b u),\\
				d_{34}(t, \b u) &= \tfrac23 r_3(t, \b u) + r_4(\b u)+ \tfrac23 r_5(t, \b u)  + r_7(\b u) + \tfrac23 r_8(\b u) ,\hspace{-6cm}\\
				d_{42}(t, \b u) &= r_1(t, \b u),&
				d_{43}(t, \b u) &= r_2(\b u),&
				d_{56}(t, \b u) &= r_{11}(\b u) +\tfrac13 r_8(\b u),\\
				d_{62}(t, \b u) &=\tfrac12 r_{10}(t, \b u),&
				d_{64}(t, \b u) &= r_9(\b u),&
				d_{65}(t, \b u) &= \tfrac12 r_{10}(t, \b u),
			\end{aligned}
		\end{equation*}
		and $p_{ij}=d_{ji}$.
		The solution to this problem will be approximated over the time interval $[12\cdot 3600,84\cdot3600]$, where a unit of time represents a second.  A reference solution of the scaled problem is depicted in Figure~\ref{fig:stratref}.
		\begin{figure}
			\centering
			\includegraphics[width=0.7\textwidth]{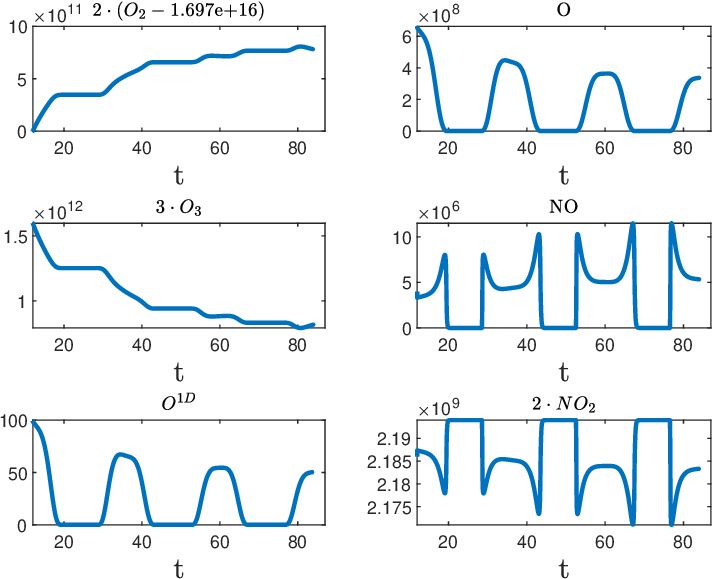}
			\caption{Reference solution of the scaled stratospheric problem \eqref{eq:strat} depicted over the time interval $[12,84]$ in hours.}\label{fig:stratref}
		\end{figure}
		As we will see, MPRK schemes do not conserve the second linear invariant, which is why \[
		\b n_2^T \frac{\b u^{n+1}-\b u^n}{\lVert\b u^{n+1}-\b u^n\rVert}=c(\b u^n)\dt+\O(\dt^2)\]
		with $c\neq 0$. Hence, we may use
		\[\eta(\b u)=\b n_2^T \b u\]
		as an entropy function satisfying \eqref{eq:cond_eta} to preserve also the second linear invariant with our relaxation technique for conservative problems. As can be seen in Figure~\ref{Fig:MPRK22_strat}, using relaxation improves the accuracy of the solution significantly. However, we note that Newton's method, while working in principle, sometimes fails at finding a solution $\gamma\approx1$ in our implementation. Indeed, we observed $\gamma\approx10^{-12}$ and thus decided to use Regula Falsi as a solver. 
		\begin{figure}[!htbp]
			\hspace{-0.7cm}\begin{subfigure}[t]{0.6\textwidth}
				\includegraphics[width=\textwidth]{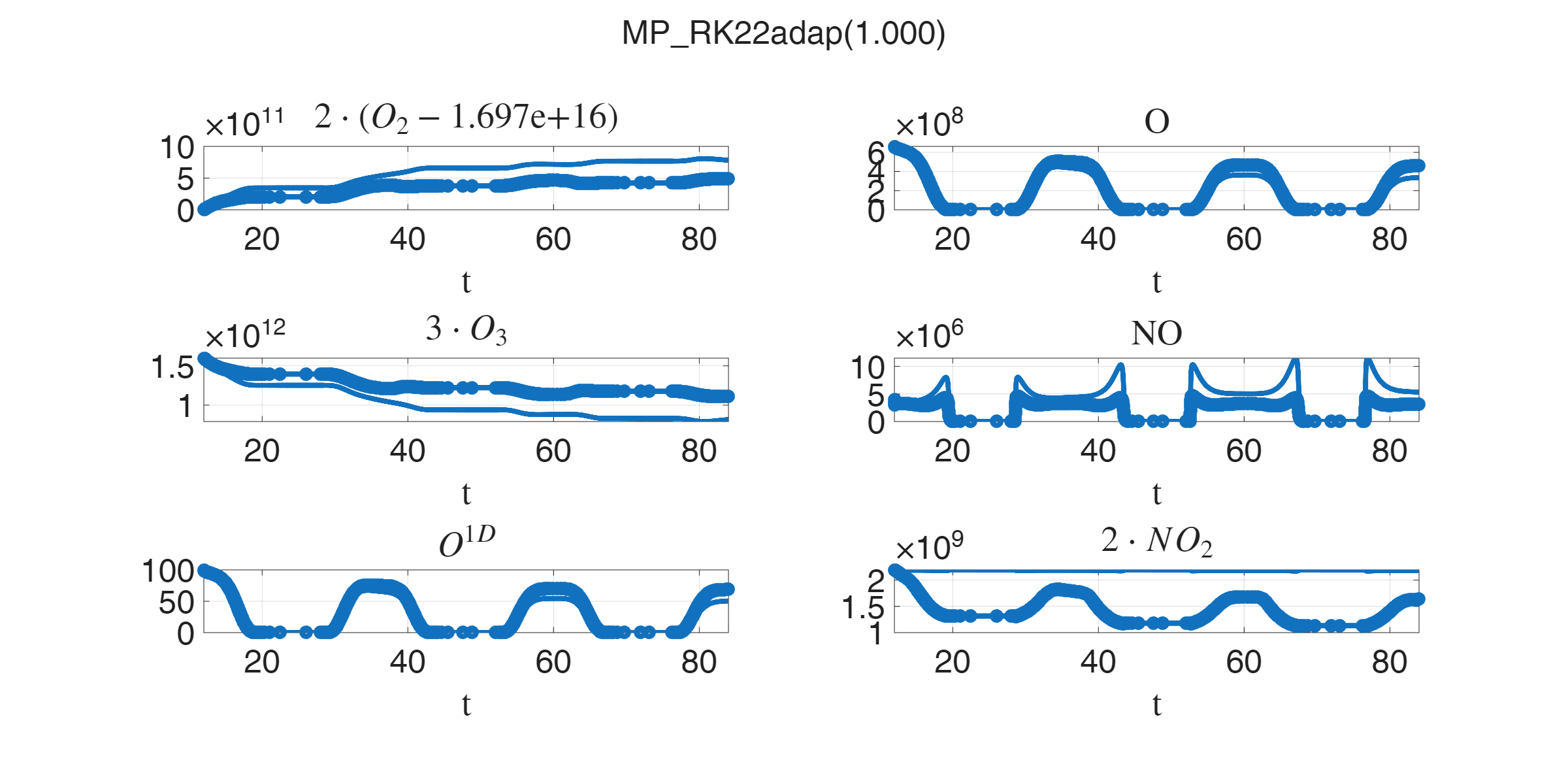}
			\end{subfigure}\hfill
			\begin{subfigure}[t]{0.52\textwidth}
				\includegraphics[width=\textwidth]{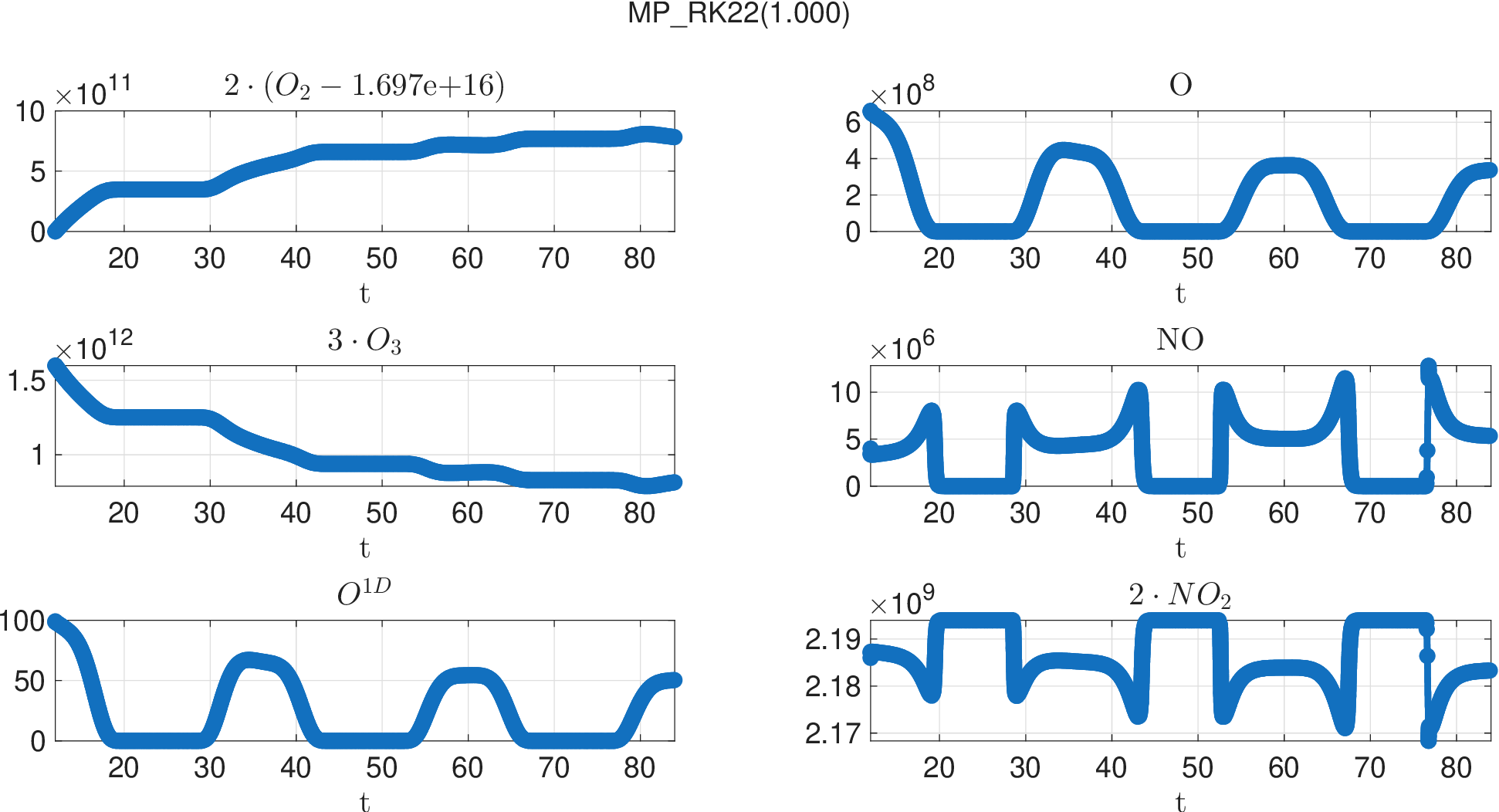}
			\end{subfigure}
			\vspace{1em} 
			\begin{subfigure}[t]{0.45\textwidth}
				\hspace{-0.4cm}	\includegraphics[width=\textwidth]{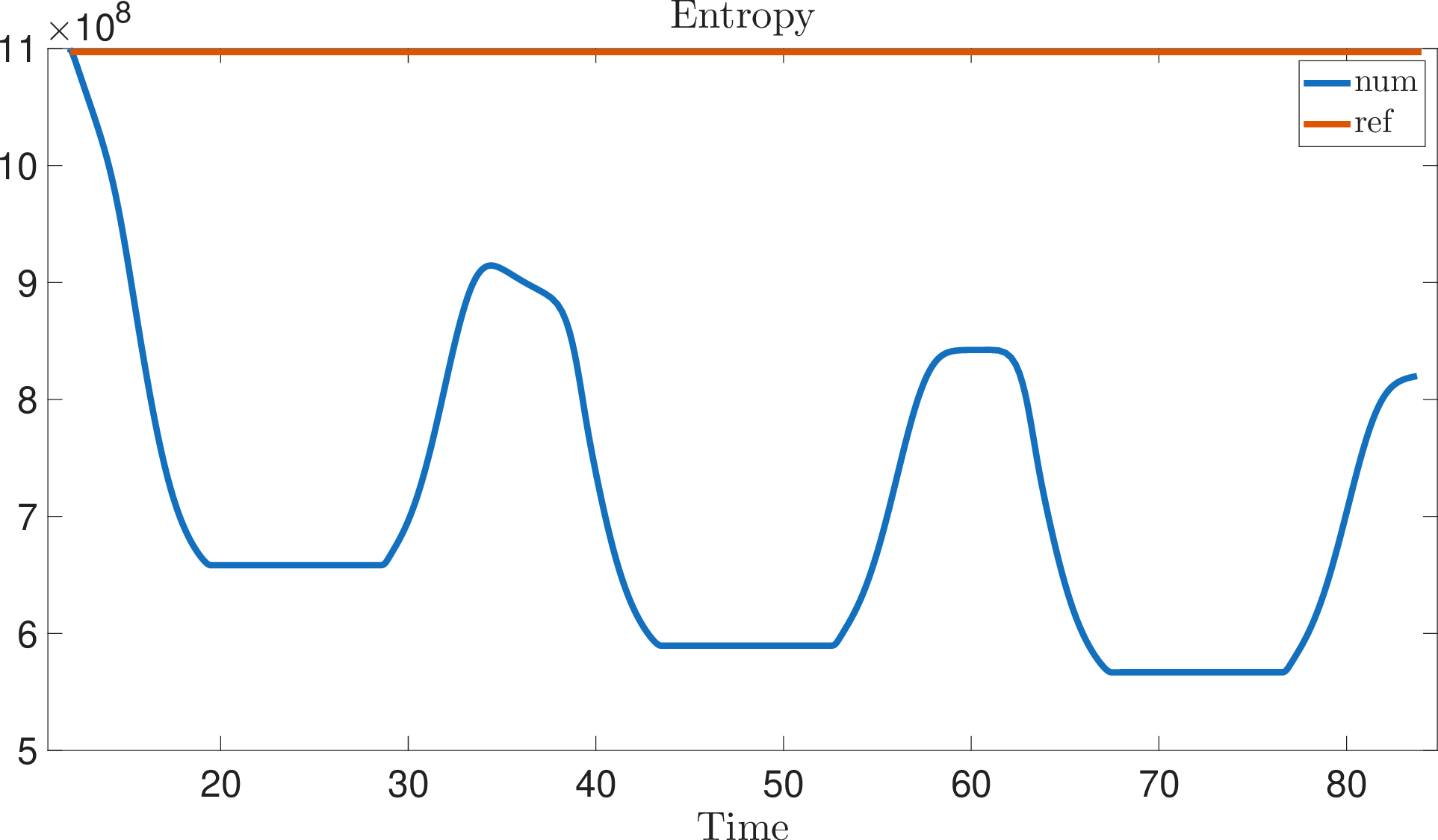}
			\end{subfigure}\hfill
			\begin{subfigure}[t]{0.5\textwidth}
				\includegraphics[width=\textwidth]{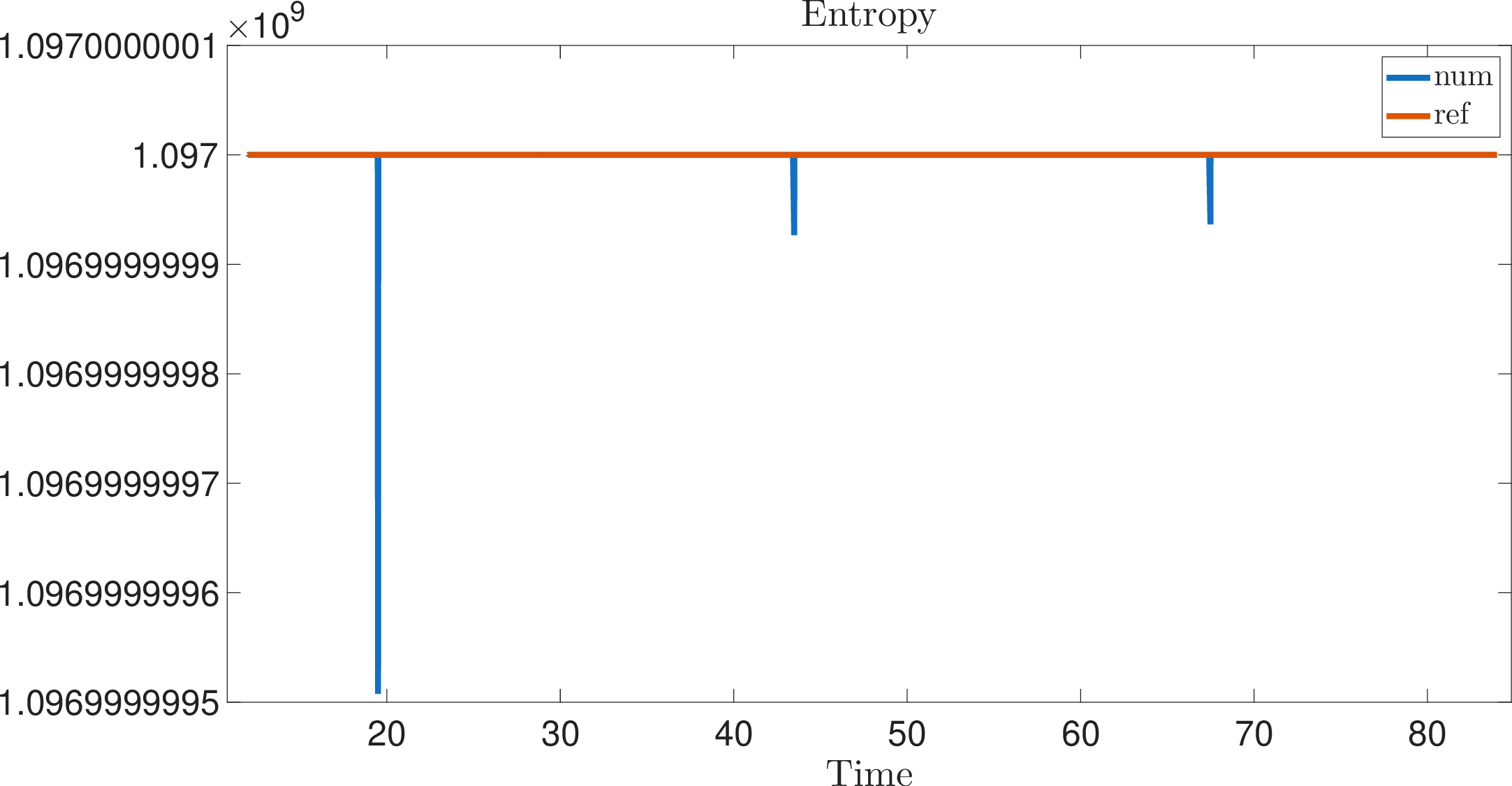}
			\end{subfigure}
			\vspace{1em} 
			\begin{subfigure}[t]{0.495\textwidth}
				\includegraphics[width=\textwidth]{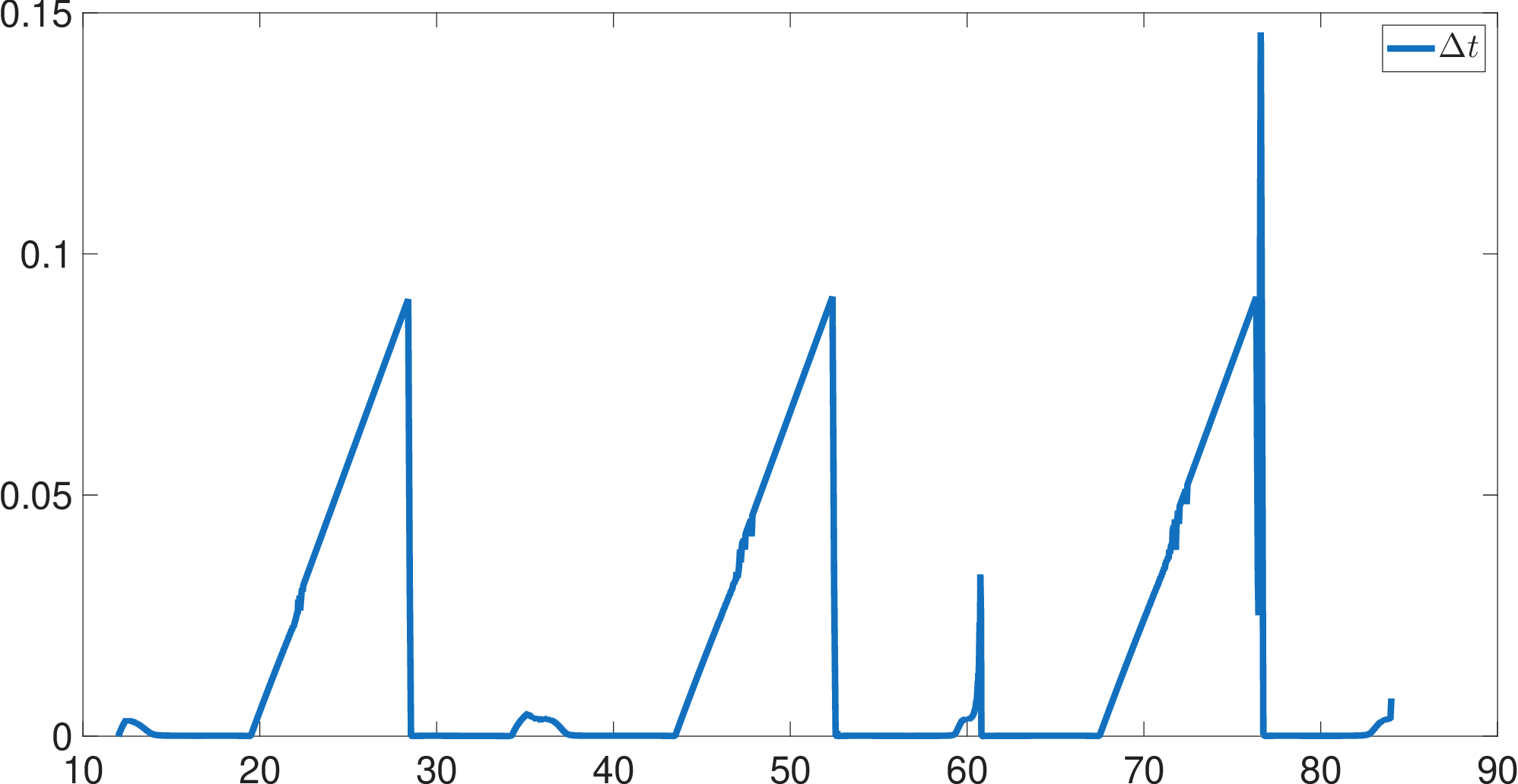}
			\end{subfigure}\hfill
			\begin{subfigure}[b]{0.495\textwidth}
				\includegraphics[width=\textwidth]{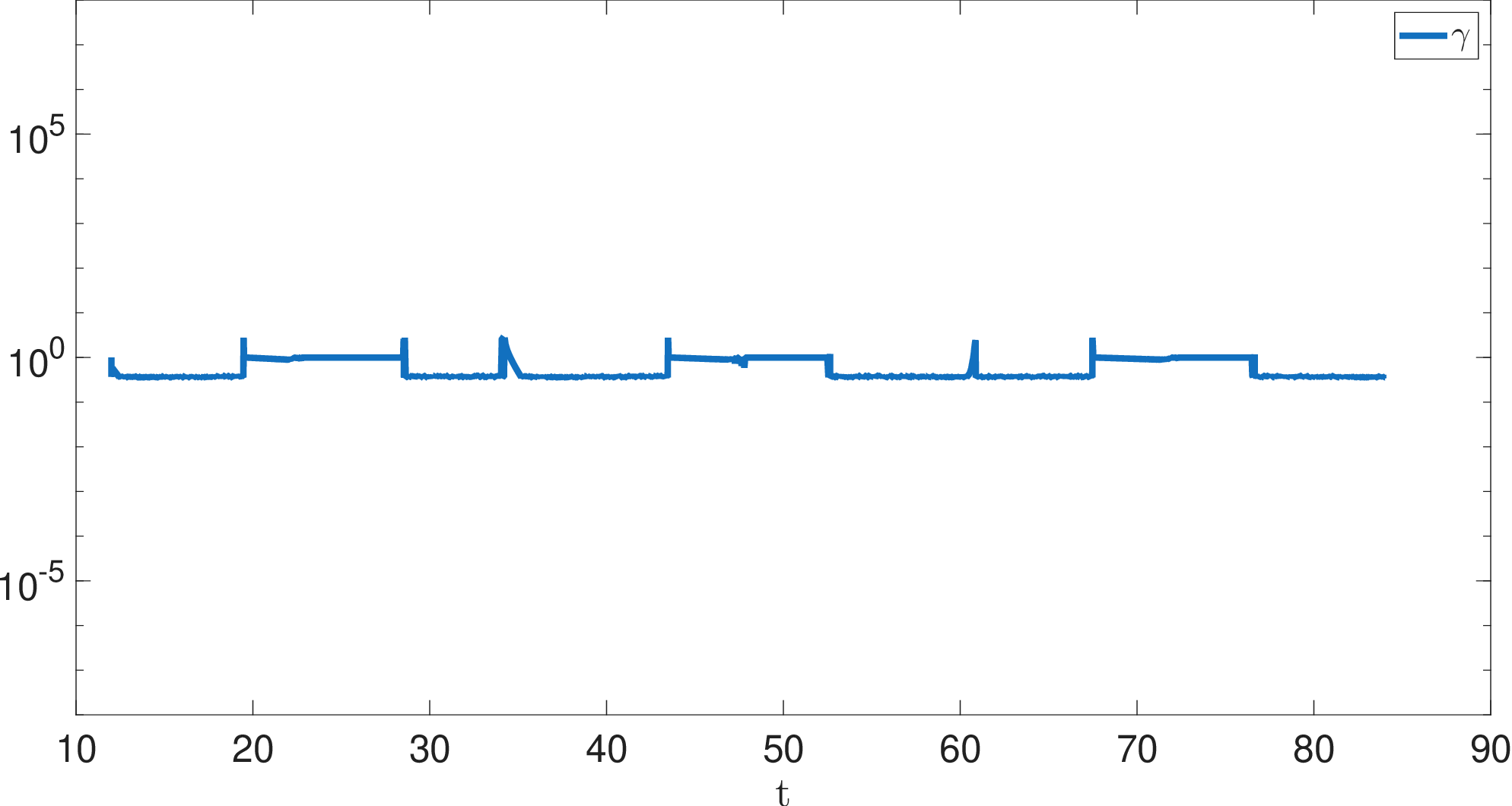}
			\end{subfigure}

			\caption{Numerical solution of stratospheric reaction problem using standard MPRK22adap(1) (left) with $\code{rtol}=\code{atol}=10^{-3}$ and $\dt_0=0.01h$. Top: without relaxation. Middle: with relaxation (Regula Falsi) and initial $\dt=0.01h$.
				Bottom: Plots of $\dt$ and $\gamma$ for the run with relaxation.}\label{Fig:MPRK22_strat}
		\end{figure}

		\subsection{Linear advection}

		Consider the linear advection equation
		\begin{equation}
			\partial_t u + \partial_x u = 0
		\end{equation}
		with periodic boundary conditions on $I=[0,2]$ and a positive initial condition $u^0 > 0$.
		Then, the solution $u(t, x) = u^0(x - t)$ stays positive. Moreover, every
		functional of the form
		\begin{equation}
			\eta(u(t,I)) = \int_I U\bigl(u(t,x)\bigr) \dif x
		\end{equation}
		for an entropy function $U$ is conserved with associated entropy flux
		$F(u) = U(u)$. Following Tadmor \cite{Tadmor03}, the entropy variables
		are $w = U'(u)$ and the flux potential is $\psi = w f - F = U'(u) u - U(u)$.
		The corresponding entropy-conservative numerical flux is
		\begin{equation}
			f^{\mathrm{num}}(u_-, u_+)
			=
			\frac{\psi(u_+) - \psi(u_-)}{w(u_+) - w(u_-)}
			=
			\frac{U'(u_+) u_+ - U(u_+) - U'(u_-) u_- + U(u_-)}{U'(u_+) - U'(u_-)}.
		\end{equation}
		If $U'(u) \to \infty$ faster than $U(u)$ as $u \to 0$, then the numerical
		flux goes to zero if one of the states goes to zero.
		Therefore, the resulting finite volume method
		\begin{equation}
			\partial_t u_i + \frac{f^{\mathrm{num}}(u_{i-1}, u_i) - f^{\mathrm{num}}(u_i, u_{i+1})}{\Delta x} = 0
		\end{equation}
		is positivity-preserving in this case. In particular, the numerical fluxes
		$f^{\mathrm{num}}(u_-, u_+)$ are always non-negative,
		resulting in a conservative production-destruction system.
		Next, we consider several examples.
		\begin{itemize}
			\item The entropy
			\begin{equation}
				U(u) = u \log(u) - u
			\end{equation}
			leads to the entropy-conservative numerical flux
			\begin{equation}
				f^{\mathrm{num}}(u_-, u_+)
				=
				\frac{u_+ - u_-}{\log(u_+) - \log(u_-)}
				=:
				\{\!\{ u \}\!\}_\mathrm{log}
			\end{equation}
			using the logarithmic mean, see \cite[Section~3.2]{ranocha2021preventing}.

			\item Similarly, the entropy
			\begin{equation}
				U(u) = -\sqrt{u}
			\end{equation}
			leads to the entropy variables $w = -1 / (2 \sqrt{u})$, the flux potential
			$\psi = \sqrt{u} / 2$, and the entropy-conservative numerical flux
			\begin{equation}
				f^{\mathrm{num}}(u_-, u_+)
				=
				\frac{\sqrt{u_+} - \sqrt{u_-}}{-1 / \sqrt{u_+} + 1 / \sqrt{u_-}}
				=
				\sqrt{u_- u_+}
				=:
				\{\!\{ u \}\!\}_\mathrm{geo}
			\end{equation}
			using the geometric mean.

			\item Analogously, the entropy
			\begin{equation}
				U(u) = 1 / u
			\end{equation}
			leads to the entropy variables $w = -1 / u^2$, the flux potential
			$\psi = -2 / u$, and the entropy-conservative numerical flux
			\begin{equation}
				f^{\mathrm{num}}(u_-, u_+)
				=
				\frac{-2 / u_+ + 2 / u_-}{-1 / u_+^2 + 1 / u_-^2}
				=
				\frac{2 u_- u_+}{u_+ + u_-}
				=:
				\{\!\{ u \}\!\}_\mathrm{harm}
			\end{equation}
			using the harmonic mean.
		\end{itemize}

		Please note that positivity preservation for an entropy-conservative method
		depends on the choice of the entropy function. For example, the standard $L^2$ entropy
		\begin{equation}
			U(u) = \frac{u^2}{2}
		\end{equation}
		leads to the numerical flux
		\begin{equation}
			f^{\mathrm{num}}(u_-, u_+)
			=
			\frac{1}{2} \frac{u_+^2 - u_-^2}{u_+ - u_-}
			=
			\frac{u_- + u_+}{2},
		\end{equation}
		i.e., the standard arithmetic mean. The resulting finite volume discretization
		\begin{equation}
			\partial_t u_i + \frac{u_{i+1} - u_{i-1}}{2 \Delta x} = 0
		\end{equation}
		is the classical second-order central discretization, which is not positivity-preserving.

		We use $N_x=100$ cells and the initial condition
		\[u(0,x)=1.9\sin(\pi x)+2, \quad x\in[0,2]\]
		and apply different iterative methods for solving for $\gamma$. The respective results are depicted in Figure~\ref{Fig:MPRK22_LA}. 
		\begin{figure}[!htbp]
			\begin{subfigure}[t]{0.4\textwidth}
				\includegraphics[width=\textwidth]{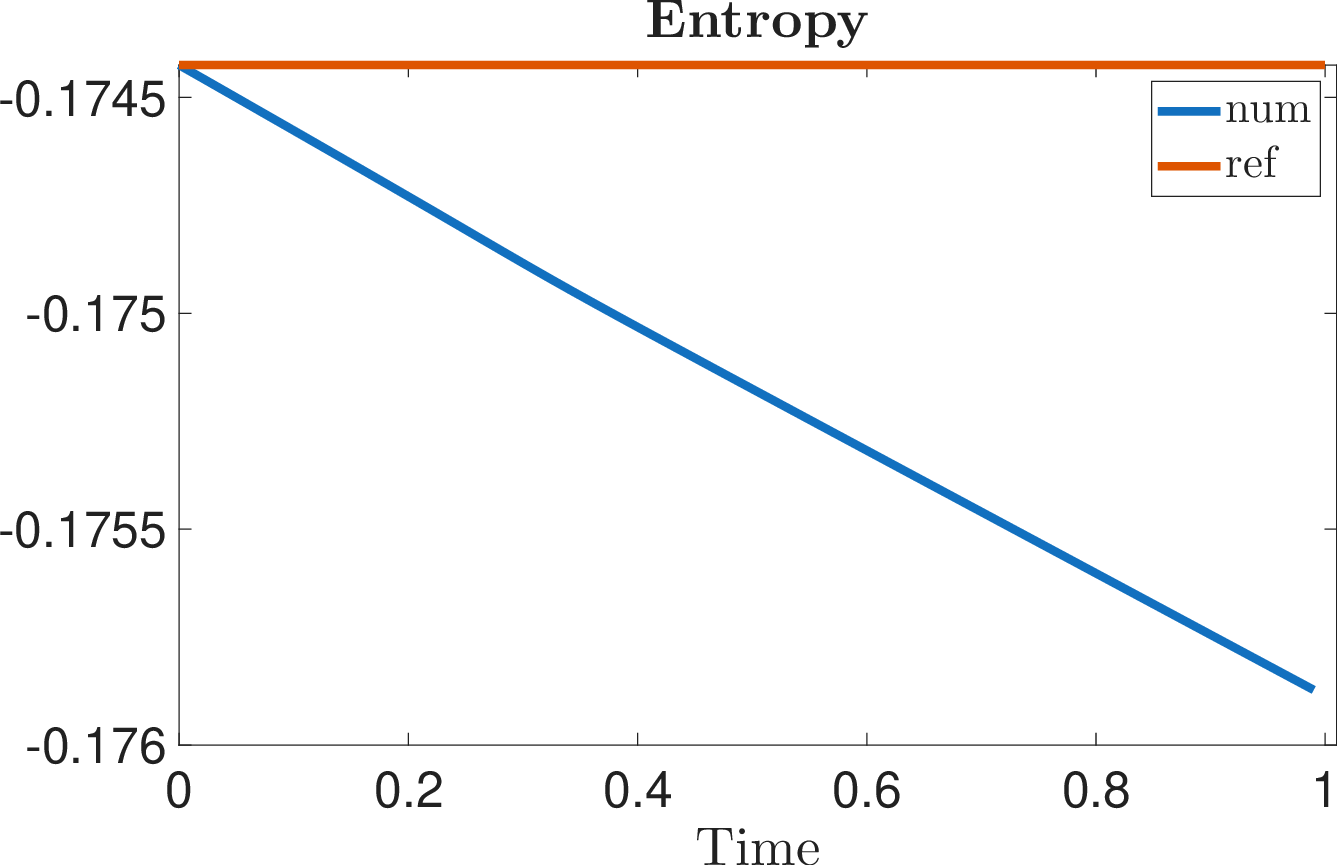}
				\subcaption{Logarithmic mean}
			\end{subfigure}
			\begin{subfigure}[t]{0.42\textwidth}
				\includegraphics[width=\textwidth]{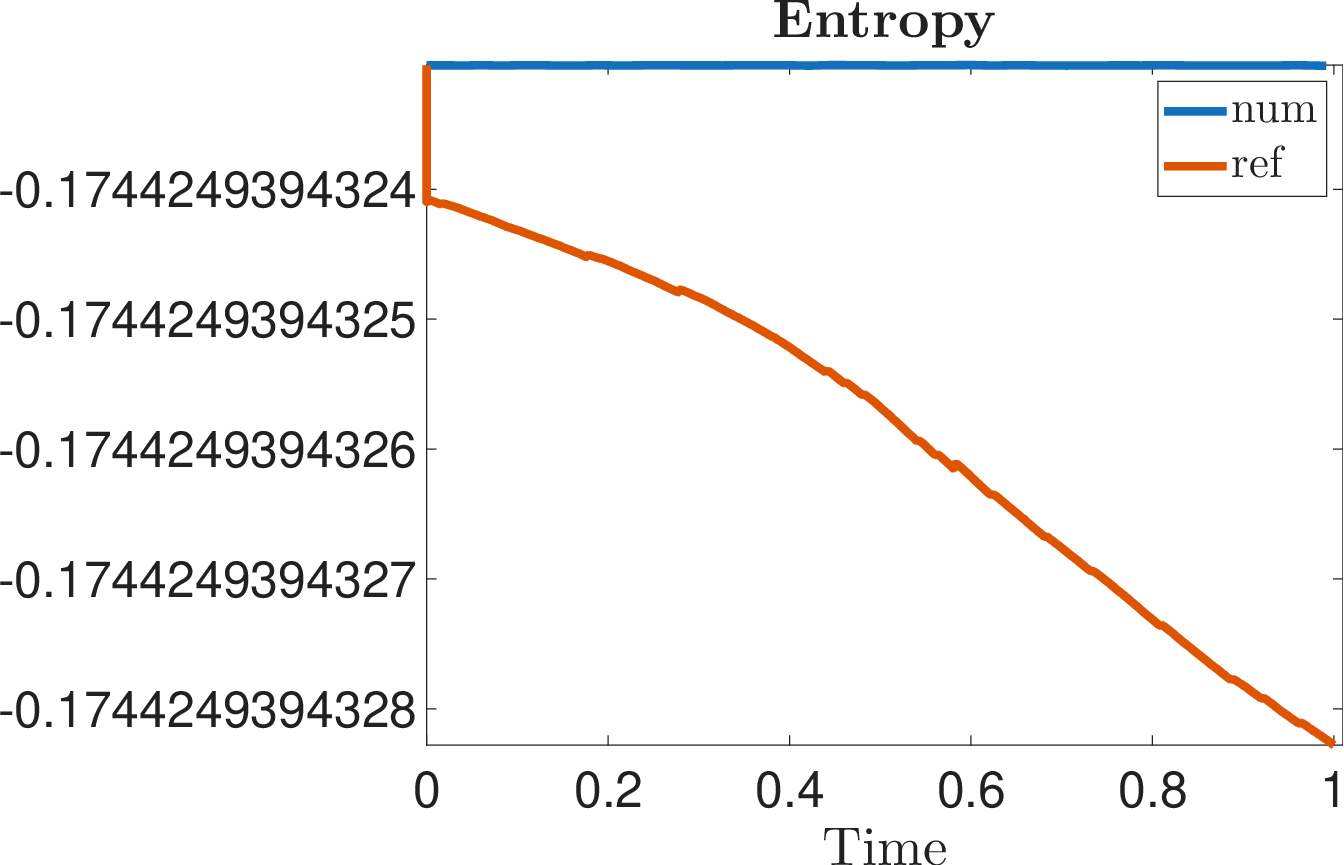}
				\subcaption{Logarithmic mean, secant method}
			\end{subfigure}
			\\
			\begin{subfigure}[t]{0.4\textwidth}
				\includegraphics[width=\textwidth]{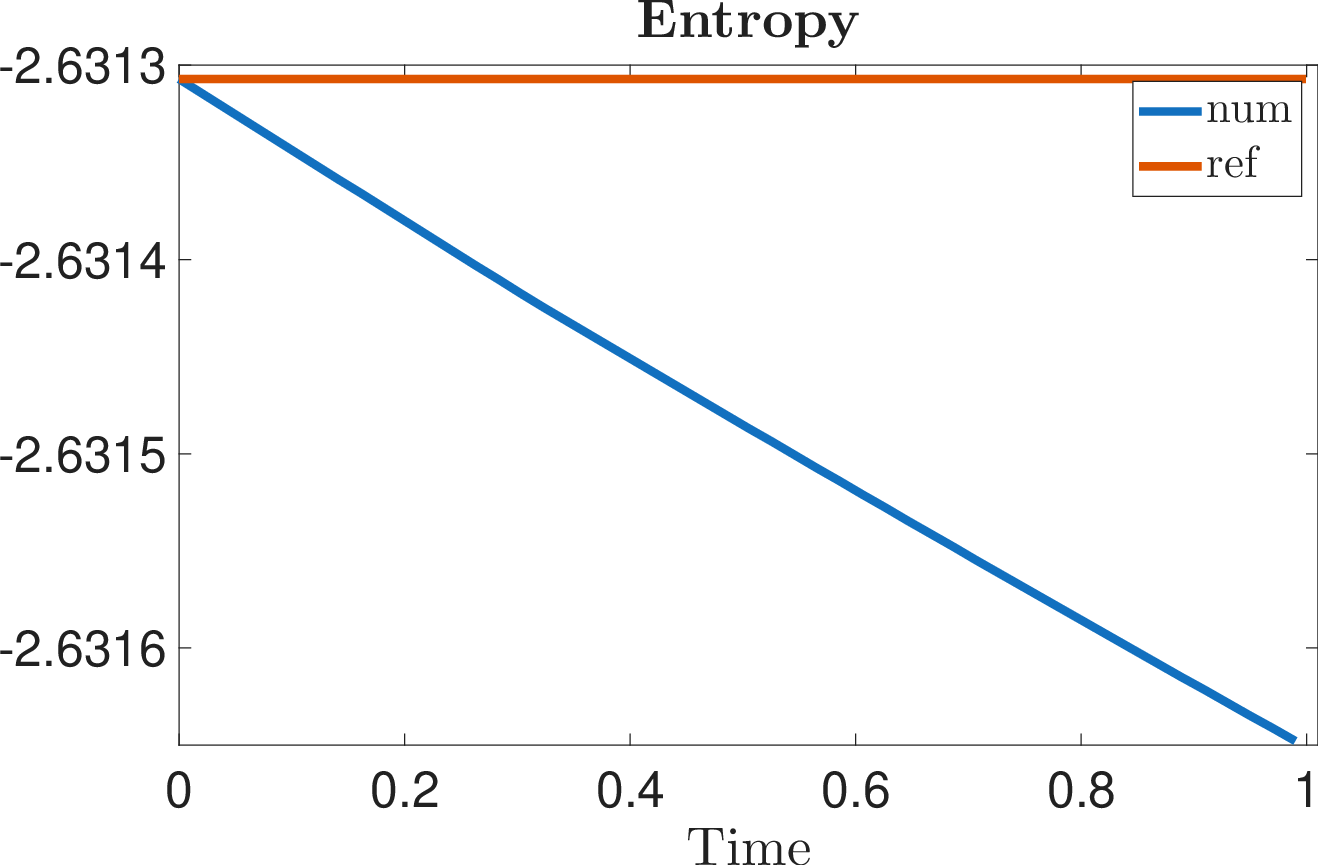}
				\subcaption{Geometric mean}
			\end{subfigure}
			\begin{subfigure}[t]{0.42\textwidth}
				\includegraphics[width=\textwidth]{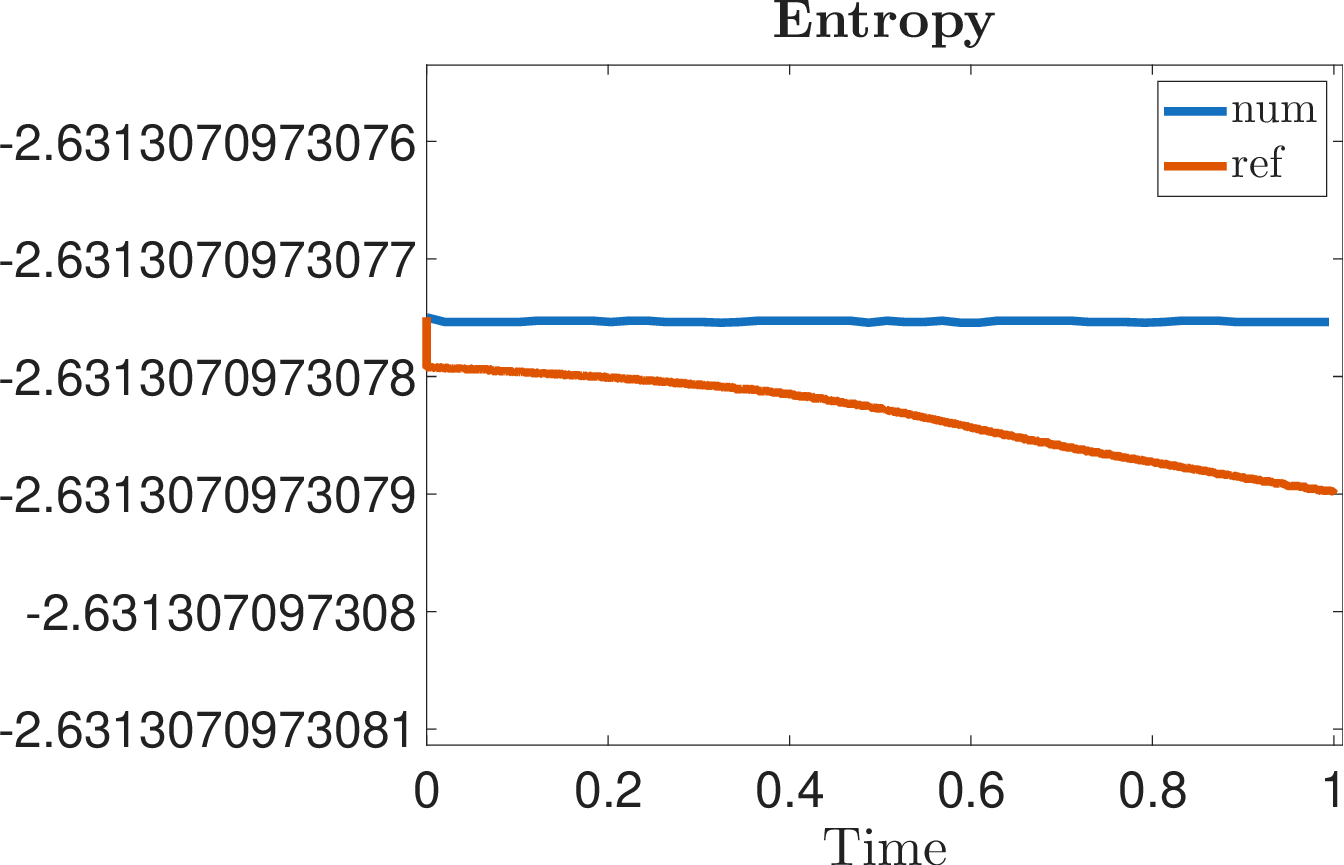}
				\subcaption{Geometric mean, regula falsi scheme}
			\end{subfigure}\\
			\begin{subfigure}[t]{0.4\textwidth}
				\includegraphics[width=\textwidth]{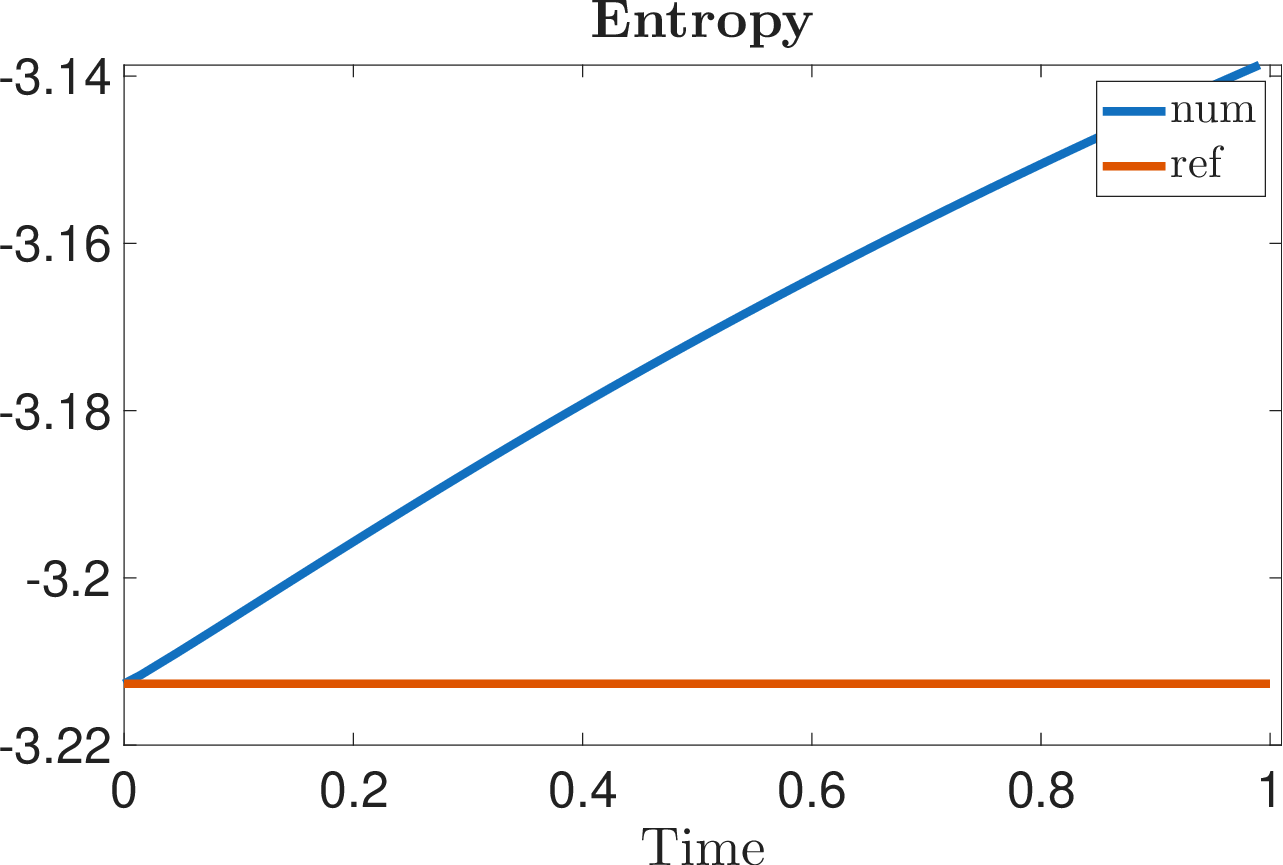}
				\subcaption{Harmonic mean}
			\end{subfigure}
			\begin{subfigure}[t]{0.42\textwidth}
				\includegraphics[width=\textwidth]{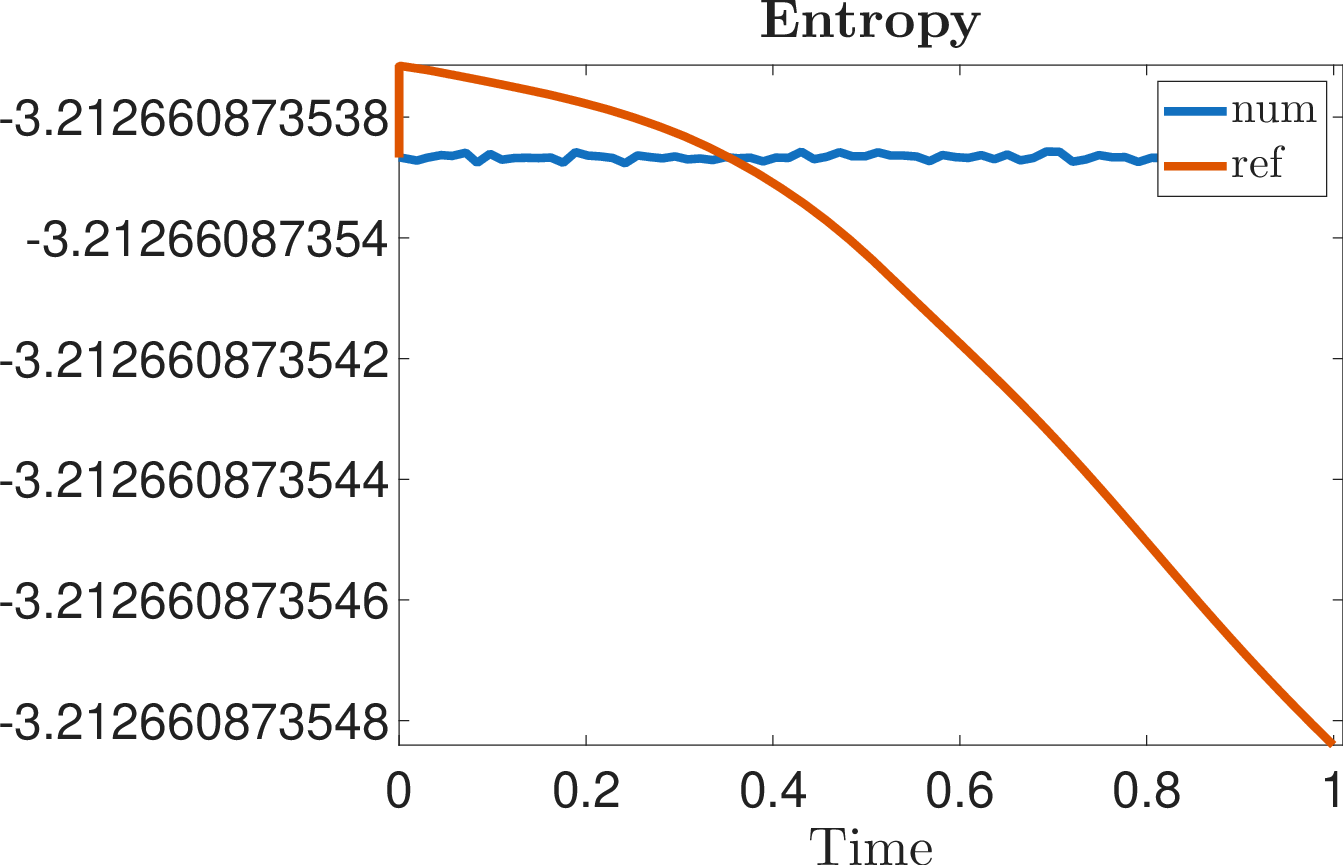}
				\subcaption{Harmonic mean, bisection method}
			\end{subfigure}
				\caption{Numerical solution of linear advection equation using MPSSPRK(0.5,1) ((a)-(b)), MPRK43adap(0.5,0.75) ((c)-(d)), and MPRK22adap(1) ((e)-(f)) with $\code{rtol}=\code{atol}=10^{-3}$, $N=100$, and $\dt_0=\dx$. Left: without relaxation. Right: with relaxation.}\label{Fig:MPRK22_LA}
			\end{figure}

			\subsection{Shallow water equations}

			The classical shallow water equations
			\begin{equation}
				\partial_t \underbrace{\begin{pmatrix} h \\ h v \end{pmatrix}}_{= u}
				+ \partial_x \underbrace{\begin{pmatrix} h v \\ h v^2 + \tfrac12 g h^2 \end{pmatrix}}_{= f(u)}
				=
				0
			\end{equation}
			have the total energy
			\begin{equation}
				U(u) = \tfrac12 h v^2 + \tfrac12 g h^2
			\end{equation}
			as entropy. The corresponding entropy variables are
			\begin{equation}
				w = \begin{pmatrix} g h - \tfrac12 v^2 \\ v \end{pmatrix}
			\end{equation}
			and the entropy flux potential is
			\begin{equation}
				\psi = \tfrac12 g h^2 v.
			\end{equation}
			For constant velocity $v$, the condition for an entropy-conservative numerical flux is
			\begin{equation}
				0
				=
				[\![ w ]\!] \cdot f^{\mathrm{num}} - [\![ \psi ]\!]
				=
				g [\![ h ]\!] f^{\mathrm{num}}_{h} - \tfrac12 g [\![ h^2 ]\!] v,
			\end{equation}
			where 	$[\![ w ]\!]\coloneqq w_{i+1}-w_i$.
			Thus, the numerical flux for the water height $h$ is
			\begin{equation}
				f^{\mathrm{num}}_{h}
				=
				\tfrac12 \frac{[\![ h^2 ]\!]}{[\![ h ]\!]} v
				=
				\{\!\{ h \}\!\} v
			\end{equation}
			with the arithmetic mean
			\begin{equation}
				\{\!\{ h \}\!\}
				=
				\tfrac12 (h_- + h_+).
			\end{equation}
			Similarly to the linear advection equations above, the arithmetic mean does not lead
			to a positivity-preserving semidiscretization. This proves
			\begin{theorem}
				An entropy-conservative semidiscretization of the shallow water equations
				is not positivity-preserving.
			\end{theorem}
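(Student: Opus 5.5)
The plan is to read the theorem as a statement about two-point entropy-conservative fluxes $f^{\mathrm{num}}(u_-,u_+)$, used in a conservative finite volume semidiscretization
\[\partial_t u_i + \frac{f^{\mathrm{num}}(u_i,u_{i+1})-f^{\mathrm{num}}(u_{i-1},u_i)}{\dx}=0.\]
The strategy is to exhibit one admissible configuration in which the semidiscretization drives a positive water height negative instantly. That single counterexample suffices.

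\textbf{Step 1: derive the mass flux.} We restrict to data with constant velocity $v$. Tadmor's condition $[\![ w ]\!]\cdot f^{\mathrm{num}}=[\![ \psi ]\!]$ then simplifies, because $[\![ v ]\!]=0$ and $[\![ \tfrac12 v^2 ]\!]=0$ remove the momentum component and the kinetic part of the first entropy variable. For $h_-\neq h_+$ this forces
\[f^{\mathrm{num}}_h=\{\!\{ h \}\!\}v,\]
as computed just before the statement. When $h_-=h_+$, consistency gives the same value. So on these states every entropy-conservative flux has this mass component.

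\textbf{Step 2: the semidiscrete height equation.} Substituting the flux, the equation for the water height becomes
\[h_i'=-\frac{v}{2\dx}\bigl(h_{i+1}-h_{i-1}\bigr),\]
which is the classical central scheme for linear advection. This already needs care: the velocity must remain constant in time for the reduction to persist. Since only the sign at $t=0$ matters, I argue at the initial instant, where $h_i(0)>0$.

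\textbf{Step 3: construct the counterexample.} I choose $v>0$ and periodic data with $h_i(0)=\epsilon>0$ small, $h_{i-1}(0)=1$ and $h_{i+1}(0)=3$. Then
\[h_i'(0)=-\frac{v}{\dx}<0,\]
and this rate is independent of $\epsilon$.

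A careful point is that positivity preservation means $h_i(t)>0$ for all $t\ge0$, not merely $h_i'(0)\ge0$. I therefore use the Taylor expansion $h_i(t)=\epsilon-tv/\dx+\O(t^2)$, where the $\O(t^2)$ constant is bounded uniformly in $\epsilon$ because the right-hand side is smooth near the data. Taking $\epsilon$ small enough, $h_i$ becomes negative at some small $t>0$.

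The \textbf{main obstacle} is the Taylor step, specifically justifying that the $\O(t^2)$ remainder is uniform in $\epsilon$ while $v$ may vary off constant after $t=0$. As an alternative, I would take the simpler definition that positivity preservation requires $h_i'\ge0$ whenever $h_i=0$ and the neighbours are positive. Setting $\epsilon=0$ then violates that condition directly.
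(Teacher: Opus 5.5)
Your Steps 1 and 2 are the paper's argument. The paper stops there: it derives $f^{\mathrm{num}}_h=\{\!\{ h\}\!\}v$ for constant velocity and concludes by analogy with the central scheme for linear advection.

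The genuine gap is the Taylor step in Step 3, and the claim there is false rather than merely unproven. In conserved variables the right-hand side involves $v_i=(hv)_i/h_i$, so it is not uniformly smooth as $h_i=\epsilon\to 0$. Step 1 also says nothing about the momentum flux on constant-velocity states, because it drops out of Tadmor's condition when $[\![ v]\!]=0$. So for a general entropy-conservative flux you have no control over $v_i'(0)$.

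Take the standard Fjordholm--Mishra--Tadmor flux, $f_h=\{\!\{h\}\!\}\{\!\{v\}\!\}$ and $f_{hv}=\{\!\{h\}\!\}\{\!\{v\}\!\}^2+\frac g2\{\!\{h^2\}\!\}$, with your data. Then $(hv)_i'(0)-v\,h_i'(0)=-2g/\dx$. Hence $v_i'(0)=-2g/(\epsilon\dx)$ and $h_i''(0)=g/(\epsilon\dx^2)+\O(1)$. The velocity of the nearly dry cell reverses on the same $\O(\epsilon)$ time scale on which $h_i$ would empty, so the linear Taylor term does not dominate.

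Set $h_i=\epsilon\eta$, $v_i=w$ and $t=\epsilon\tau$. To leading order this gives
\[
\frac{\dd\eta}{\dd\tau}=-k(w+v),\qquad \frac{\dd w}{\dd\tau}=-\frac{k}{2\eta}\left(c^2-w^2\right),\qquad k=\frac{h_{i+1}-h_{i-1}}{4\dx},\quad c^2=v^2+2g(h_{i-1}+h_{i+1}),
\]
which integrates to $\eta=\frac{c^2-v^2}{c^2-w^2}\left(\frac{(c+w)(c-v)}{(c-w)(c+v)}\right)^{v/c}$. Along the trajectory $w$ decreases monotonically from $v$ towards $-c$. Hence $\eta$ is smallest at $w=-v$, where $\eta=\left(\frac{c-v}{c+v}\right)^{2v/c}>0$. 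So in your example the water height dips to a fixed fraction of $\epsilon$ and then recovers. Shrinking $\epsilon$ or enlarging $v$ does not change this.

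Your fallback is essentially what the paper's one-line argument amounts to. Set $h_i=0$ with positive neighbours; then $h_i'=-v/\dx<0$. This mirrors the paper's linear-advection discussion, where positivity is tied to the mass flux vanishing when an adjacent state is dry, and the arithmetic mean violates that. It is a valid proof if the theorem is read as a property of the semidiscrete right-hand side. You should present it that way, with that definition stated explicitly.

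Do not upgrade it to the trajectory statement ($h_i(0)>0$ but $h_i(t)<0$ later) by the usual continuity argument. The right-hand side is not continuous at the dry state in conserved variables, which is exactly why your Taylor route breaks down.
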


			One can show a similar result for the polytropic Euler equations with
			pressure $p \propto \rho^\gamma$ and $\gamma > 1$. However, the limiting case of
			the isothermal Euler equations is different and discussed in the next subsection.

			\subsection{Isothermal Euler equations}

			The 1D isothermal Euler equations are
			\begin{equation}
				\partial_t \underbrace{\begin{pmatrix} \rho \\ \rho v \end{pmatrix}}_{=\b u}
				+ \partial_x \underbrace{\begin{pmatrix} \rho v \\ \rho v^2 + c^2 \rho \end{pmatrix}}_{=\b f(\b u)}
				=
				0,
			\end{equation}
			where $\rho$ is the density, $v$ is the velocity, and $c$ is the speed of sound.
			We take the total energy
			\begin{equation}
				U(\b u) = \tfrac12 \rho v^2 + \tfrac12 c^2 \rho \log(\rho)
			\end{equation}
			as (mathematical) entropy. An associated entropy-conservative numerical flux at the interface $i+\frac12$ is
			given by \cite{winters2020entropy}
			\begin{equation}
				f^{\mathrm{num}}_{\rho}
				=
				\{\!\{ \rho \}\!\}_\mathrm{log} \{\!\{ v \}\!\},
				\quad
				f^{\mathrm{num}}_{\rho v}
				=
				\{\!\{ \rho \}\!\}_\mathrm{log} \{\!\{ v \}\!\}^2 + \{\!\{ c^2 \rho \}\!\}.
			\end{equation}
			Since the logarithmic mean goes to zero if one of the states goes to zero, the
			resulting entropy-conservative finite volume method is unconditionally positive.
				Even more general, the flux differencing method \cite{tadmor1987numerical,Tadmor03,lefloch2002fully,fisher2013discretely,ranocha2018comparison,chen2017entropy} based on diagonal-norm SBP operators. In particular,
			high-order discontinuous Galerkin spectral element methods (DGSEMs) are positivity-preserving.
			While we apply the underlying explicit RK method to the second conserved variable together with the standard relaxation algorithm, we use MPRK22 for $\rho$, where we use the PDS
			\begin{equation*}
				p_{i+1,i}=d_{i,i+1}=\max\left\{0,	f^{\mathrm{num}}_{\rho}\right\}, \quad p_{i,i+1}=d_{i+1,i}=-\min\left\{0,	f^{\mathrm{num}}_{\rho}\right\}
			\end{equation*}
			for $i=1,\dotsc,N-1,$ and we take periodic boundary conditions into account for the terms if $i=N$. The study of flux-balanced MPRK schemes introduced in \cite{IMST2026} is left for future works.
			In Figure~\ref{Fig:MPRK22_IE} we solve the Riemann problem (RP)
			\[\b u_L=(0.8,10^{-3}),\quad \b u_R=(1,10^{-2})\]
			with periodic boundary conditions and final time $\tend=1$. We note that one should not use an entropy conservative flux for an RP, however, this is a good example that our time integrator maintains the entropy properties of the space discretization.
			\begin{figure}[!htbp]
				\begin{subfigure}[t]{0.5\textwidth}
					\includegraphics[width=\textwidth]{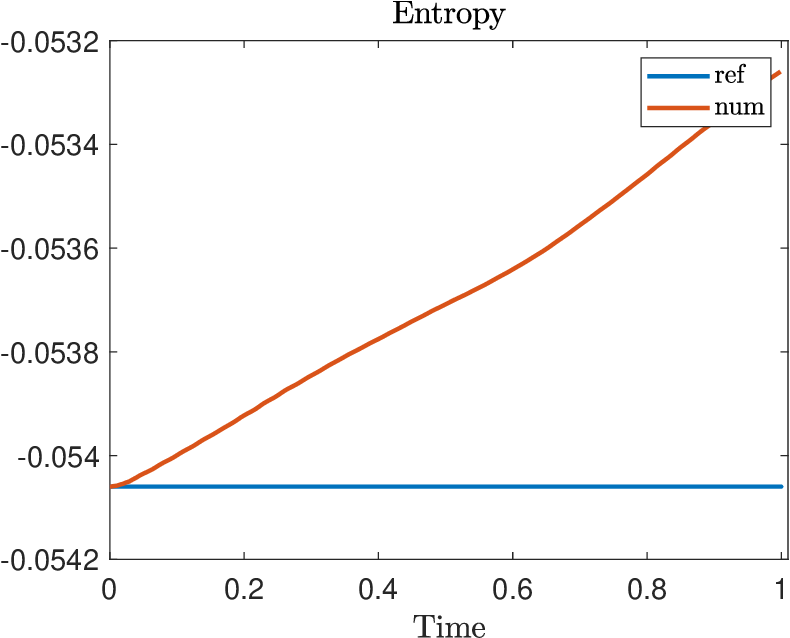}
				\end{subfigure}
				\begin{subfigure}[t]{0.5\textwidth}
					\includegraphics[width=\textwidth]{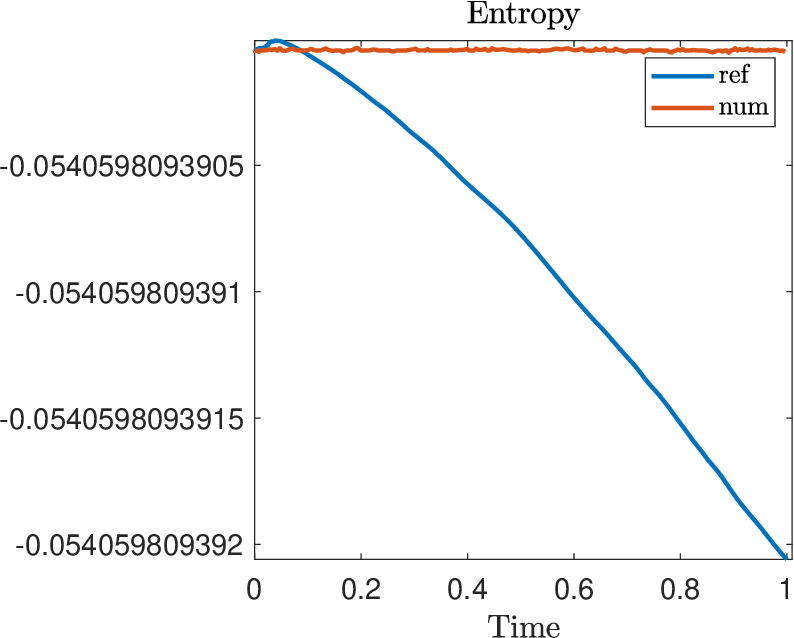}
				\end{subfigure}\\
				\begin{center}
					\begin{subfigure}[t]{0.5\textwidth}
						\includegraphics[width=\textwidth]{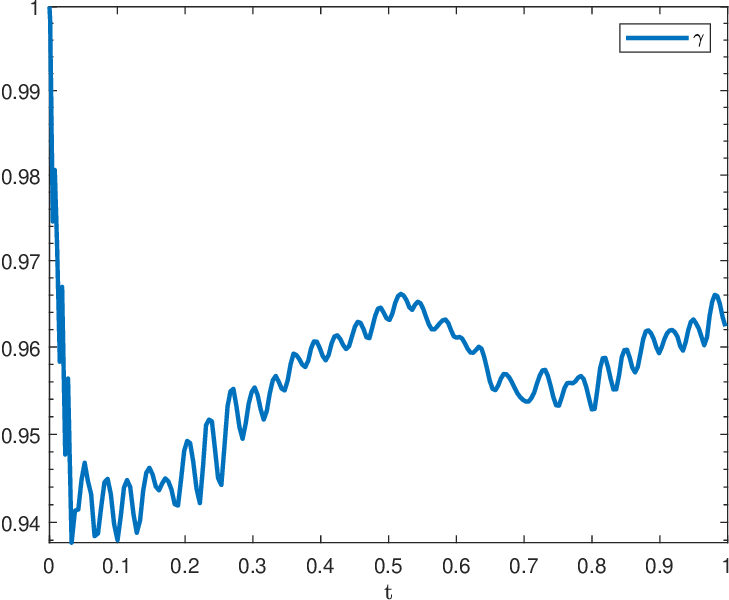}
					\end{subfigure}
				\end{center}
				\caption{Numerical solution of isothermal Euler equations with $N_x=100$ using MPRK22adap(1) with $\code{rtol}=\code{atol}=10^{-3}$ and $\dt=\dx$. Left: without relaxation. Right: with relaxation.} \label{Fig:MPRK22_IE}
			\end{figure}

			\subsection{Porous Medium Equation}
			The porous medium equation
			\[u_t = (u^m)_{xx} = (a(u) u_x)_x, \quad a(u) = m u^{m-1} \]
			with a free parameter $m>1$, see for instance \cite{Boscarino23}, admits a non-negative weak solution
			\[u^{(m)}(t,x)=t^{-k}\left[\max\left(1-\frac{k(m-1)}{2m}\frac{\lvert x\rvert^2}{t^{2k}},0\right)\right]^{\frac{1}{m-1}}\]
			with $k=\frac{1}{m+1}$, the so-called \emph{Barenblatt} solution \cite{Barenblatt52}. For every $t>0$, the solution has a compact support $[-\alpha_m(t),\alpha_m(t)]$ where
			\[\alpha_m(t)=\sqrt{\frac{2m}{k(m-1)}}t^k.\]
			We follow \cite{Boscarino23} using $u(0,x)=u^{(m)}(1,x)$ as an initial condition. We plot the numerical solution at time $t=2$ on the spatial domain $[-6,6]$ using the boundary conditions $u(t,\pm 6)=0$ for $t>1$.

			We use the second-order space discretization from \cite{Mattsson12,ranocha2019mimetic} given by
			\begin{align*}
				f_i(\b u(t))=&\frac{a(u_i(t))+a(u_{i+1}(t))}{2\dx^2} u_{i+1}(t)\\&-\frac{a(u_{i-1}(t))+2a(u_i(t))+a(u_{i+1}(t))}{2\dx^2}u_i(t)\\&+\frac{a(u_{i-1}(t))+a(u_i(t))}{2\dx^2} u_{i-1}(t)
			\end{align*}
			for $i=2,\dotsc,N$ and
			\[f_j(\b u(t))= \frac{a(u_j(t))}{2\dx^2}u_j(t),\quad \text{ for }\quad j\in \{1,N\}.\]
			Next, we consider the convex entropy
			\[\eta(\b u)=\frac{\dx^2}2\sum_{i=1}^{N_x}u_i^2, \]
			which satisfies
			\begin{equation*}
				\begin{aligned}
					\frac{\dd}{\dd t}\eta(\b u(t))\leq 0
				\end{aligned}
			\end{equation*}
			for the boundary conditions mentioned above, see \cite[Theorem~4.1]{ranocha2019mimetic}.
			This system of ODEs may be rewritten as a conservative PDS by setting
			\begin{equation*}
				\begin{aligned}
					p_{i,i+1}(\b u)&=\frac{a(u_i)+a(u_{i+1})}{2\dx^2} u_{i+1},&\quad p_{i,i-1}(\b u)&= \frac{a(u_{i-1})+a(u_i)}{2\dx^2} u_{i-1},&\quad i&=2,\dotsc,N,\\
					p_{1,2}(\b u)&= \frac{a(u_2)}{2\dx^2}u_2,&\quad p_{N,N-1}(\b u)&= \frac{a(u_{N-1})}{2\dx^2}u_{N-1},&\quad d_{i,j}&=p_{j,i}.
				\end{aligned}
			\end{equation*}

			According to \cite{Boscarino23}, the cases $m=3,5$ are particularly interesting as the numerical solution of the proposed third-order IMEX method in \cite[p.~10, eq.~(30)]{Boscarino23} generates negative approximations and which cannot happen with MPRK schemes. Indeed, we observe in Figure~\ref{Fig:MPRK22_PME} that we obtain positive approximations while the relaxation algorithm gives us an entropy estimate. Here, we do not plot $\gamma$ as it was constantly at $1$.
			\begin{figure}[!htbp]
				\begin{subfigure}[t]{0.5\textwidth}
					\includegraphics[width=\textwidth]{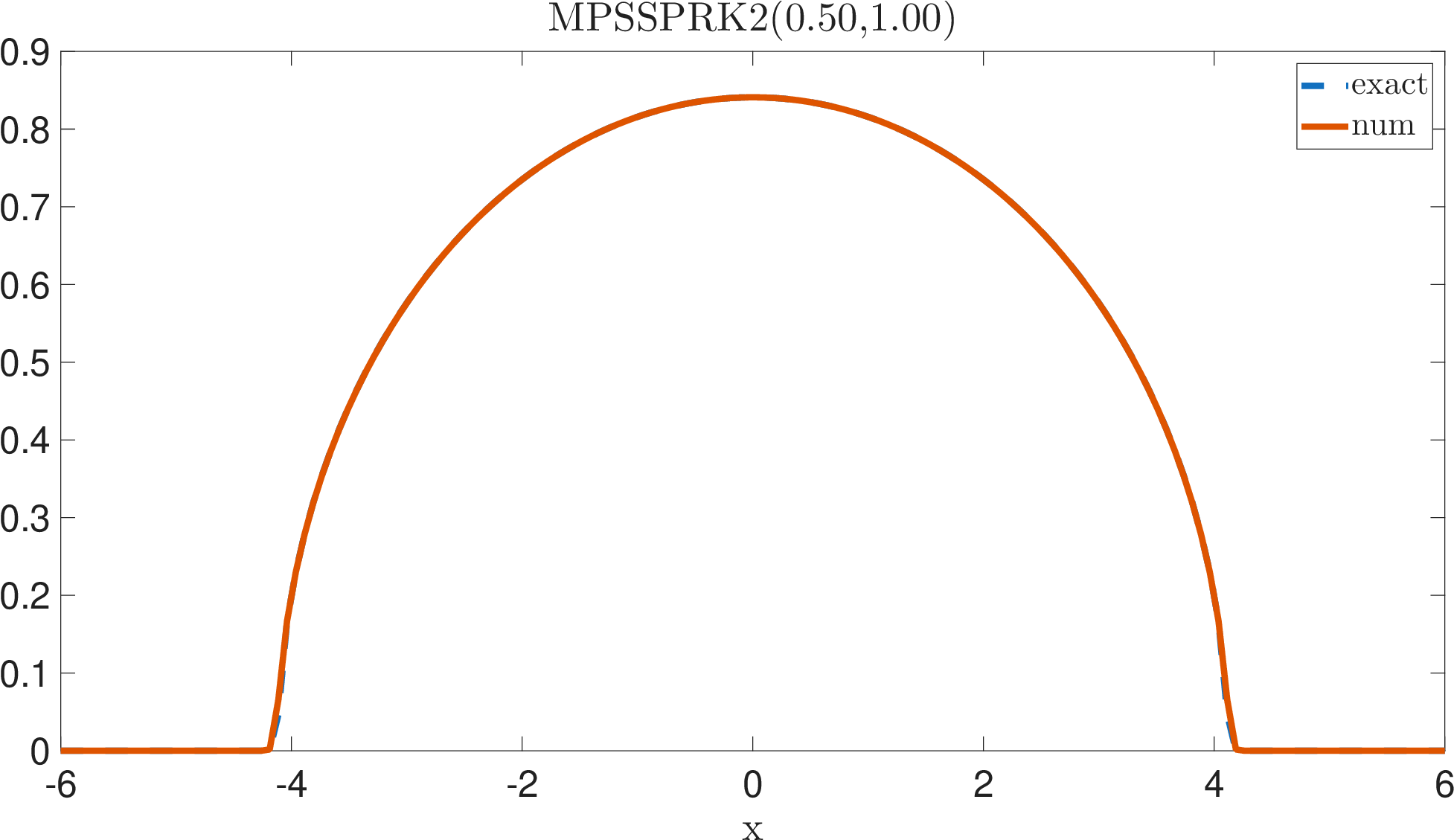}
					\subcaption{$m=3$}
				\end{subfigure}
				\begin{subfigure}[t]{0.5\textwidth}
					\includegraphics[width=\textwidth]{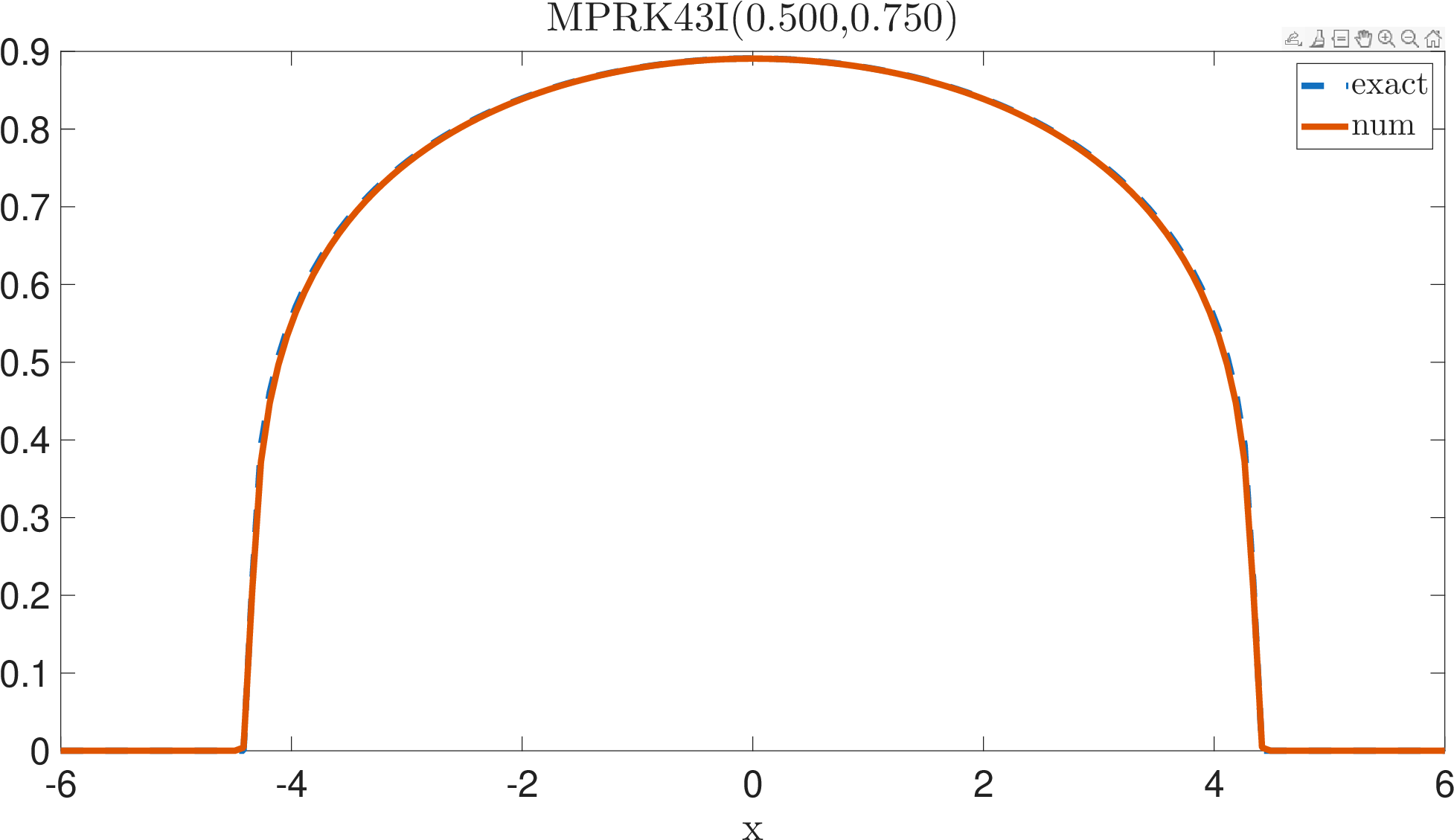}
					\subcaption{$m=5$}
				\end{subfigure}\\
				\begin{subfigure}[t]{0.5\textwidth}
					\includegraphics[width=\textwidth]{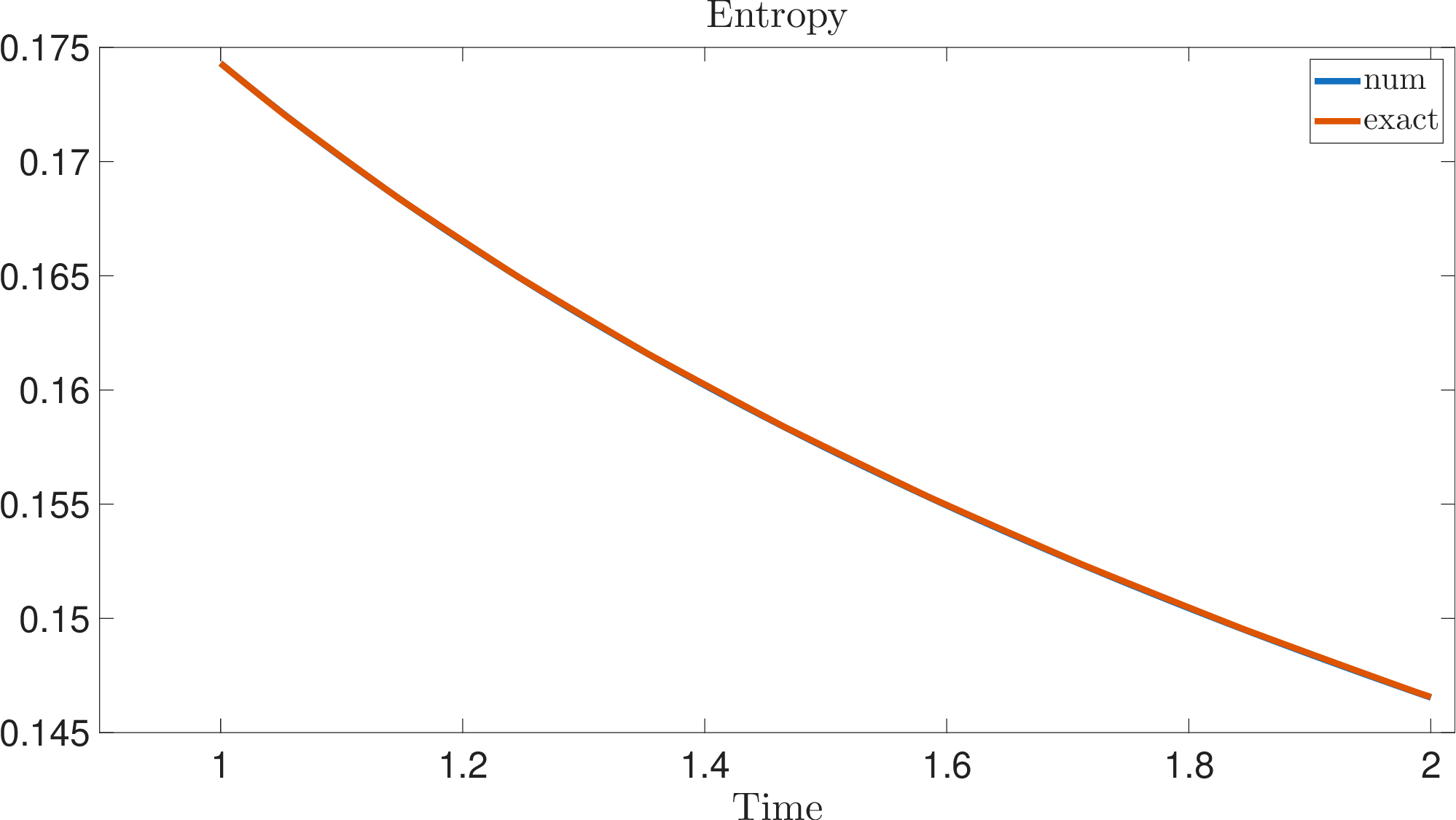}
					\subcaption{$m=3$}
				\end{subfigure}
				\begin{subfigure}[t]{0.495\textwidth}
					\includegraphics[width=\textwidth]{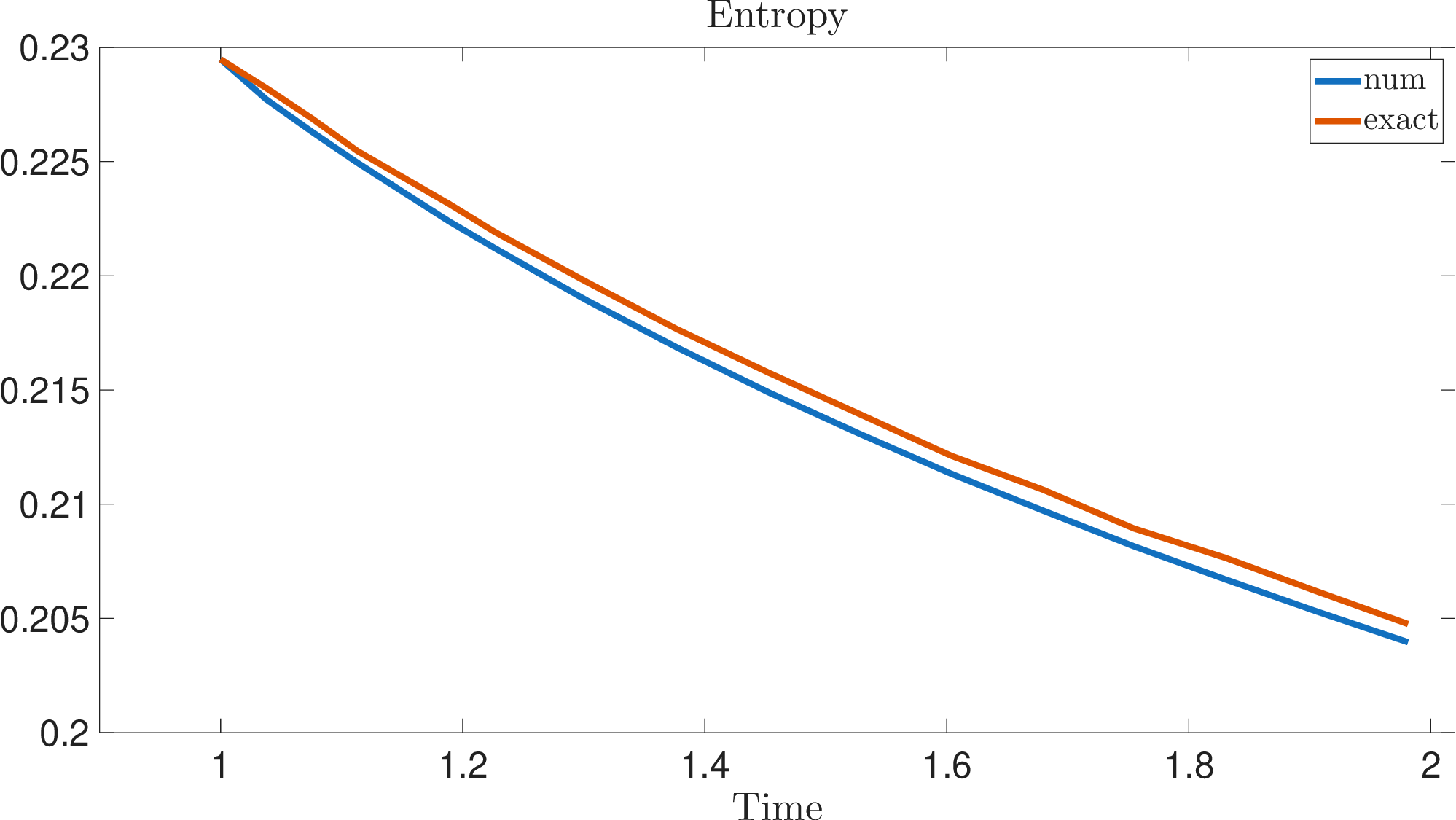}
					\subcaption{$m=5$}
				\end{subfigure}
				\caption{Numerical solution of PME equation with $N_x=160$ using MPSSPRK22(0.5,1) (left) and MPRK43(0.5,0.75) (right) with $\dt=\dx$ and relaxation.} \label{Fig:MPRK22_PME}
			\end{figure}
			\section{Summary and conclusions}
			In this work we investigated non-standard additive Runge--Kutta (NSARK) schemes, which include modified Patankar (MP) methods or Geometric Conservative (GeCo) to name a few. Being particularly interested in positivity-preserving methods that are also capable of conserving at least one linear invariant, we answered the question of whether these schemes can be equipped with a relaxation technique that preserves these properties while ensuring entropy stability. We point out that positivity preservation is easy to accomplish for entropy dissipative problems. For entropy conservative problems, where no linear invariant needs to be preserved, one can equip an unconditionally positive method with the geometric mean to compute the relaxation update. If the conservative problem has a linear invariant or one is interested in keeping a conservative PDS part also conservative within the relaxation procedure, we propose to use a linearly implicit formula for the relaxation update, which in turn results in a coupled linear-nonlinear system for the simultaneous computation of $\gamma$ and $\bunplusgamma$. All techniques can be used for any positivity-preserving method maintaining the order, however, the latter relaxation technique involves a bootstrapping algorithm to achieve higher-order entropy conservative methods preserving a linear invariant.

			We have tested our theoretical findings by means of multiple examples of ordinary and partial differential equations.
			Furthermore, interpreting a linear invariant as entropy, we were able to preserve both linear invariants of the stiff stratospheric reaction problem using MPRK. We have also tested several flux and entropy pairs for the linear advection equation testing the different iterative solvers for the computation of $\gamma$ and $\bunplusgamma$. Moreover, we applied our technique also in the context of the isothermal Euler equation guaranteeing the positivity of the density. Finally, we have also tested MPRK and MPSSPRK schemes with the entropy dissipative porous medium equation, where we are also able to avoid negative approximations.

			Future research topics include the testing of further NSARK schemes, including the recently developed flux-balanced versions, and the efficiency of the related methods. As some of the NSARK schemes are already proven to be conditionally stable, a thorough stability analysis for these methods is also part of ongoing research.

			\bmhead{Acknowledgements}

			\section*{Declarations}

			\paragraph{Funding}
			T.\ Izgin gratefully acknowledges the financial support by Fulbright Germany.
			H.\ Ranocha was supported by the Deutsche Forschungsgemeinschaft
			(DFG, German Research Foundation, project number 513301895)
			and the Daimler und Benz Stiftung (Daimler and Benz foundation,
			project number 32-10/22).  C.-W. Shu acknowledges partial support from NSF grant DMS-2309249.
			\paragraph{Conflict of interest}
			Not applicable.
			\paragraph{Ethics approval and consent to participate}
			Not applicable.

			\paragraph{Consent for publication}
			All authors consent to submit for publication.

			\paragraph{Data, Materials and Code availability}
			The source code used in this study is available at \cite{IRS2026repository}.

			\paragraph{Author contribution}
			All authors contributed to the study conception and design. Material preparation, data collection and analysis were performed by Thomas Izgin. The first draft of the manuscript was written by Thomas Izgin and all authors commented on as well as wrote on previous versions of the manuscript. All authors read and approved the final manuscript.


			\begin{appendices}

			\end{appendices}


			\bibliography{references}

@Article{KM18Order3,
  author     = {Kopecz, Stefan and Meister, Andreas},
  journal    = {BIT},
  title      = {Unconditionally positive and conservative third order modified {P}atankar-{R}unge-{K}utta discretizations of production-destruction systems},
  year       = {2018},
  issn       = {0006-3835},
  number     = {3},
  pages      = {691--728},
  volume     = {58},
  fjournal   = {BIT. Numerical Mathematics},
  mrclass    = {65L06 (65L05 65L20)},
  mrnumber   = {3855688},
  mrreviewer = {Luigi Brugnano},
  publisher  = {Springer},
  doi        = {10.1007/s10543-018-0705-1},
}

@article{BDM2003,
	title        = {A high-order conservative {P}atankar-type discretisation for stiff systems of production-destruction equations},
	author       = {Burchard, Hans and Deleersnijder, Eric and Meister, Andreas},
	year         = 2003,
	journal      = {Appl. Numer. Math.},
	volume       = 47,
	number       = 1,
	pages        = {1--30},
	doi          = {10.1016/S0168-9274(03)00101-6},
	issn         = {0168-9274,1873-5460},
	fjournal     = {Applied Numerical Mathematics. An IMACS Journal},
	mrclass      = {65L06 (65L05)},
	mrnumber     = 2003144,
	mrreviewer   = {Gabriela\ Schranz-Kirlinger}
}

@article{Tadmor03, title={Entropy stability theory for difference approximations of nonlinear conservation laws and related time-dependent problems}, volume={12}, DOI={10.1017/S0962492902000156}, journal={Acta Numerica}, author={Tadmor, Eitan}, year={2003}, pages={451–512}}

@article {MPDeC,
	AUTHOR = {{\"O}ffner, P. and Torlo, D.},
	TITLE = {Arbitrary high-order, conservative and positivity preserving
	{P}atankar-type deferred correction schemes},
	JOURNAL = {Appl. Numer. Math.},
	FJOURNAL = {Applied Numerical Mathematics. An IMACS Journal},
	VOLUME = {153},
	YEAR = {2020},
	PAGES = {15--34},
	ISSN = {0168-9274},
	MRCLASS = {65L06},
	MRNUMBER = {4064785},
	MRREVIEWER = {Helmut Podhaisky},
	DOI = {},
	URL = {https://doi.org/10.1016/j.apnum.2020.01.025},
}

@article {SSPMPRK3,
	AUTHOR = {Huang, J.  and Zhao, W. and Shu, C.-W.},
	TITLE = {A third-order unconditionally positivity-preserving scheme for
	production-destruction equations with applications to
	non-equilibrium flows},
	JOURNAL = {J. Sci. Comput.},
	FJOURNAL = {Journal of Scientific Computing},
	VOLUME = {79},
	YEAR = {2019},
	NUMBER = {2},
	PAGES = {1015--1056},
	ISSN = {0885-7474},
	MRCLASS = {65M06 (76Nxx 76V05)},
	MRNUMBER = {3969000},
	DOI = {},
	URL = {https://doi.org/10.1007/s10915-018-0881-9},
}

@article {SSPMPRK2,
	AUTHOR = {Huang, J.  and Shu, C.-W.},
	TITLE = {Positivity-preserving time discretizations for
	production-destruction equations with applications to
	non-equilibrium flows},
	JOURNAL = {J. Sci. Comput.},
	FJOURNAL = {Journal of Scientific Computing},
	VOLUME = {78},
	YEAR = {2019},
	NUMBER = {3},
	PAGES = {1811--1839},
	ISSN = {0885-7474},
	MRCLASS = {65M06 (65L06 65M20 76V05)},
	MRNUMBER = {3934688},
	MRREVIEWER = {Qifeng Zhang},
	DOI = {},
	URL = {https://doi.org/10.1007/s10915-018-0852-1},
}

@article {Boscarino23,
	AUTHOR = {Boscarino, Sebastiano},
	TITLE = {High-order semi-implicit schemes for evolutionary partial
	differential equations with higher order derivatives},
	JOURNAL = {J. Sci. Comput.},
	FJOURNAL = {Journal of Scientific Computing},
	VOLUME = {96},
	YEAR = {2023},
	NUMBER = {1},
	PAGES = {Paper No. 11, 31},
	ISSN = {0885-7474,1573-7691},
	MRCLASS = {65M20 (35G25 65L05 65L06 65M06)},
	MRNUMBER = {4593278},
	DOI = {10.1007/s10915-023-02235-0},
	URL = {https://doi.org/10.1007/s10915-023-02235-0},
}

@article {Barenblatt52,
	AUTHOR = {Barenblatt, G. I.},
	TITLE = {On self-similar motions of a compressible fluid in a porous
	medium},
	JOURNAL = {Akad. Nauk SSSR. Prikl. Mat. Meh.},
	FJOURNAL = {Akad. Nauk SSSR. Prikl. Mat. Meh.},
	VOLUME = {16},
	YEAR = {1952},
	PAGES = {679--698},
	MRCLASS = {76.1X},
	MRNUMBER = {52948},
	MRREVIEWER = {R.\ E.\ Gaskell},
}

@book{Patankar1980,
	title        = {Numerical heat transfer and fluid flow},
	author       = {Patankar, S. V.},
	year         = 1980,
	publisher    = {Hemisphere Pub. Corp. New York},
	address      = {Washington},
	series       = {{Series in computational methods in mechanics and thermal sciences}},
	isbn         = {0-07-048740-5},
	url          = {http://opac.inria.fr/record=b1085925}
}

@article{IMPV2025,
	title        = {Modified Patankar Linear Multistep methods for production-destruction systems},
	author       = {Izzo, Giuseppe and Messina, Eleonora and Pezzella, Mario and Vecchio, Antonia},
	year         = 2025,
	journal      = {Journal of Scientific Computing},
	publisher    = {Springer},
	volume       = 102,
	number       = 3,
	pages        = 87
}

@article{izgin2024,
	title={A Boot-Strapping Technique to Design Dense Output Formulae for Modified Patankar-Runge-Kutta Methods},
	author={Thomas Izgin},
	year={2024},
	eprint={2406.16718},
	archivePrefix={arXiv},
	primaryClass={math.NA},
	journal={https://arxiv.org/abs/2406.16718},
}

@article{IMST2026,
	title={Flux-Balanced Patankar-type Schemes for the Compressible Euler Equations},
	author={Izgin, Thomas and Meister, Andreas and Shu, Chi-Wang and Torlo, Davide},
	journal={arXiv preprint arXiv:2602.14392},
	year={2026},
	url={https://arxiv.org/abs/2602.14392},
	note={Available at arXiv:2602.14392 [math.NA]}
}

@book{boyd2004convex,
	title={Convex Optimization},
	author={Boyd, Stephen and Vandenberghe, Lieven},
	year={2004},
	publisher={Cambridge University Press},
	address={Cambridge, UK},
	isbn={978-0521833783}
}

@article{jungel2015entropy,
  title={Entropy dissipative one-leg multistep time approximations of nonlinear
         diffusive equations},
  author={J{\"u}ngel, Ansgar and Mili{\v{s}}i{\'c}, Josipa-Pina},
  journal={Numerical Methods for Partial Differential Equations},
  volume={31},
  number={4},
  pages={1119--1149},
  year={2015},
  publisher={Wiley Online Library},
  doi={10.1002/num.21938}
}

@article{jungel2017entropy,
  title={Entropy-dissipating semi-discrete {R}unge-{K}utta schemes for
         nonlinear diffusion equations},
  author={J{\"u}ngel, Ansgar and Schuchnigg, Stefan},
  journal={Communications in Mathematical Sciences},
  volume={15},
  number={1},
  pages={27--53},
  year={2017},
  publisher={International Press of Boston},
  doi={10.4310/CMS.2017.v15.n1.a2}
}

@article {sandu2001positive,
    AUTHOR = {Sandu, A.},
     TITLE = {Positive numerical integration methods for chemical kinetic
              systems},
   JOURNAL = {J. Comput. Phys.},
  FJOURNAL = {Journal of Computational Physics},
    VOLUME = {170},
      YEAR = {2001},
    NUMBER = {2},
     PAGES = {589--602},
      ISSN = {0021-9991},
   MRCLASS = {80M20 (65L99 80A30)},
  MRNUMBER = {1844904},
       DOI = {10.1006/jcph.2001.6750},
       URL = {https://doi.org/10.1006/jcph.2001.6750},
}

@article{BBKS2007,
	author = {Bruggeman, J. and Burchard, H. and Kooi, B. W. and Sommeijer, B.},
	doi = {10.1016/j.apnum.2005.12.001},
	issn = {0168-9274},
	journal = {Applied Numerical Mathematics},
	note = {},
	number = {1},
	pages = {36--58},
	title = {{A second-order, unconditionally positive, mass-conserving integration scheme for biochemical systems}},
	url = {http://www.sciencedirect.com/science/article/pii/S0168927405002242},
	volume = {57},
	year = {2007}
}

@article{HR2020,
	title        = {Space-time residual distribution on moving meshes},
	author       = {Hubbard, M. E. and {Ricchiuto}, M. and S\'arm\'any, D.},
	year         = 2020,
	journal      = {Comput. Math. Appl.},
	volume       = 79,
	number       = 5,
	pages        = {1561--1589},
	doi          = {10.1016/j.camwa.2019.09.019},
	issn         = {0898-1221,1873-7668},
	url          = {https://doi.org/10.1016/j.camwa.2019.09.019},
	fjournal     = {Computers \& Mathematics with Applications. An International Journal},
	mrclass      = {65M08 (65M50 76B15)},
	mrnumber     = 4065801,
	mrreviewer   = {Jean-Pierre\ Croisille}
}

@techreport{ricchiuto2011habilitation,
	title        = {Contributions to the development of residual discretizations for hyperbolic conservation laws with application to shallow water flows},
	author       = {{Ricchiuto}, Mario},
	year         = 2011,
	month        = 12,
	address      = {Bordeaux, France},
	institution  = {Universit{\'e} Sciences et Technologies-Bordeaux I},
	type         = {Habilitation thesis}
}

@article{BIM2022,
	title        = {Positivity-preserving methods for ordinary differential equations},
	author       = {{Blanes, Sergio} and {Iserles, Arieh} and {Macnamara, Shev}},
	year         = 2022,
	journal      = {ESAIM: M2AN},
	volume       = 56,
	number       = 6,
	pages        = {1843--1870},
	doi          = {10.1051/m2an/2022042},
	url          = {https://doi.org/10.1051/m2an/2022042}
}

@book {GKS2011,
	AUTHOR = {Gottlieb, Sigal and Ketcheson, David and Shu, Chi-Wang},
	TITLE = {Strong stability preserving {R}unge-{K}utta and multistep time
	discretizations},
	PUBLISHER = {World Scientific Publishing Co. Pte. Ltd.},
	address = {Hackensack, NJ},
	YEAR = {2011},
	DOI = {10.1142/7498}
}

@article{Shampine1986,
	title = {Conservation laws and the numerical solution of ODEs},
	journal = {Computers \& Mathematics with Applications},
	volume = {12},
	number = {5, Part 2},
	pages = {1287-1296},
	year = {1986},
	issn = {0898-1221},
	doi = {https://doi.org/10.1016/0898-1221(86)90253-1},
	url = {https://www.sciencedirect.com/science/article/pii/0898122186902531},
	author = {L.F. Shampine},
}

@article{STKB2005,
	title = {Non-negative solutions of ODEs},
	journal = {Applied Mathematics and Computation},
	volume = {170},
	number = {1},
	pages = {556-569},
	year = {2005},
	issn = {0096-3003},
	doi = {https://doi.org/10.1016/j.amc.2004.12.011},
	url = {https://www.sciencedirect.com/science/article/pii/S0096300304009683},
	author = {L.F. Shampine and S. Thompson and J.A. Kierzenka and G.D. Byrne}
}

@article{ketcheson2019relaxation,
  title={Relaxation {R}unge-{K}utta Methods: {C}onservation and
         Stability for Inner-Product Norms},
  author={Ketcheson, David I},
  journal={SIAM Journal on Numerical Analysis},
  volume={57},
  number={6},
  pages={2850--2870},
  year={2019},
  publisher={Society for Industrial and Applied Mathematics},
  doi={10.1137/19M1263662},
  eprint={1905.09847},
  eprinttype={arxiv},
  eprintclass={math.NA}
}

@article{ranocha2020relaxation,
  title={Relaxation {R}unge-{K}utta Methods: Fully-Discrete Explicit
         Entropy-Stable Schemes for the Compressible {E}uler and
         {N}avier-{S}tokes Equations},
  author={Ranocha, Hendrik and Sayyari, Mohammed and Dalcin, Lisandro and
          Parsani, Matteo and Ketcheson, David I.},
  journal={SIAM Journal on Scientific Computing},
  volume={42},
  number={2},
  pages={A612--A638},
  year={2020},
  month={03},
  publisher={Society for Industrial and Applied Mathematics},
  doi={10.1137/19M1263480},
  eprint={1905.09129},
  eprinttype={arxiv},
  eprintclass={math.NA}
}

@article{ranocha2020general,
    AUTHOR = {Ranocha, Hendrik and L\'oczi, Lajos and Ketcheson, David I.},
     TITLE = {General relaxation methods for initial-value problems with
              application to multistep schemes},
   JOURNAL = {Numer. Math.},
  FJOURNAL = {Numerische Mathematik},
    VOLUME = {146},
      YEAR = {2020},
    NUMBER = {4},
     PAGES = {875--906},
      ISSN = {0029-599X,0945-3245},
   MRCLASS = {65L06 (65L20 65M12 65M70 65P10)},
  MRNUMBER = {4182089},
MRREVIEWER = {Gabriela\ Schranz-Kirlinger},
       DOI = {10.1007/s00211-020-01158-4},
       URL = {https://doi.org/10.1007/s00211-020-01158-4},
}

@article {CLM2010,
	AUTHOR = {Calvo, M. and Laburta, M. P. and Montijano, J. I. and
	R\'andez, L.},
	TITLE = {Projection methods preserving {L}yapunov functions},
	JOURNAL = {BIT},
	FJOURNAL = {BIT. Numerical Mathematics},
	VOLUME = {50},
	YEAR = {2010},
	NUMBER = {2},
	PAGES = {223--241},
	ISSN = {0006-3835,1572-9125},
	MRCLASS = {65L06 (65L05 93B40 93D05)},
	MRNUMBER = {2640013},
	MRREVIEWER = {Vasile\ Dr\u agan},
	DOI = {10.1007/s10543-010-0259-3},
	URL = {https://doi.org/10.1007/s10543-010-0259-3},
}

@article{ranocha2019mimetic,
  title={Mimetic Properties of Difference Operators: Product and Chain Rules as
         for Functions of Bounded Variation and Entropy Stability of Second
         Derivatives},
  author={Ranocha, Hendrik},
  journal={BIT Numerical Mathematics},
  volume={59},
  number={2},
  pages={547--563},
  year={2019},
  month={06},
  publisher={Springer},
  doi={10.1007/s10543-018-0736-7},
  eprint={1805.09126},
  eprinttype={arxiv},
  eprintclass={math.NA}
}

@article{Mattsson12,
	AUTHOR = {Mattsson, Ken},
	TITLE = {Summation by parts operators for finite difference
	approximations of second-derivatives with variable
	coefficients},
	JOURNAL = {J. Sci. Comput.},
	FJOURNAL = {Journal of Scientific Computing},
	VOLUME = {51},
	YEAR = {2012},
	NUMBER = {3},
	PAGES = {650--682},
	ISSN = {0885-7474,1573-7691},
	MRCLASS = {65M06 (65M12)},
	MRNUMBER = {2914426},
	MRREVIEWER = {Tore\ Fl\aa tten},
	DOI = {10.1007/s10915-011-9525-z},
	URL = {https://doi.org/10.1007/s10915-011-9525-z},
}

@article {KM2019,
	AUTHOR = {Kopecz, Stefan and Meister, Andreas},
	TITLE = {On the existence of three-stage third-order modified
	{P}atankar-{R}unge-{K}utta schemes},
	JOURNAL = {Numer. Algorithms},
	FJOURNAL = {Numerical Algorithms},
	VOLUME = {81},
	YEAR = {2019},
	NUMBER = {4},
	PAGES = {1473--1484},
	ISSN = {1017-1398,1572-9265},
	MRCLASS = {65L06},
	MRNUMBER = {3987250},
	DOI = {10.1007/s11075-019-00680-3},
	URL = {https://doi.org/10.1007/s11075-019-00680-3},
}

@article {AKM2020,
	AUTHOR = {\'{A}vila, Andr\'{e}s I. and Kopecz, Stefan and Meister, Andreas},
	TITLE = {A comprehensive theory on generalized {BBKS} schemes},
	JOURNAL = {Appl. Numer. Math.},
	FJOURNAL = {Applied Numerical Mathematics. An IMACS Journal},
	VOLUME = {157},
	YEAR = {2020},
	PAGES = {19--37},
	ISSN = {0168-9274},
	MRCLASS = {65L06 (65L04 65L20)},
	MRNUMBER = {4109346},
	MRREVIEWER = {B\"{u}lent Karas\"{o}zen},
	DOI = {},
	URL = {https://doi.org/10.1016/j.apnum.2020.05.027},
}

@article {MCD2020,
	AUTHOR = {Martiradonna, Angela and Colonna, Gianpiero and Diele, Fasma},
	TITLE = {{{G}e{C}o}: {G}eometric {C}onservative nonstandard schemes
	for biochemical systems},
	JOURNAL = {Appl. Numer. Math.},
	FJOURNAL = {Applied Numerical Mathematics. An IMACS Journal},
	VOLUME = {155},
	YEAR = {2020},
	PAGES = {38--57},
	ISSN = {0168-9274},
	MRCLASS = {92C40 (65P10)},
	MRNUMBER = {4087156},
	DOI = {10.1016/j.apnum.2019.12.004},
	URL = {https://doi.org/10.1016/j.apnum.2019.12.004},
}

@article{NSARK,
	doi = {10.2140/camcos.2025.20.29},
	journal = {Communications in Applied Mathematics and Computational Science},
      title={Order conditions for Runge--Kutta-like methods with solution-dependent coefficients},
author={Thomas Izgin and David I. Ketcheson and Andreas Meister},
year={2025},
	VOLUME = {20-1},
pages = {29--66},
url={https://doi.org/10.2140/camcos.2025.20.29},
}

@phdthesis{IzginThesis,
	author={Izgin, Thomas},
	title={A Unifying Theory for Runge-Kutta-like Time Integrators: Convergence and Stability},
	school={University of Kassel},
	year={2024},
	DOI = {10.17170/kobra-202402059522},
}

@article {Crouzeix1980,
	AUTHOR = {Crouzeix, M.},
	TITLE = {Une m\'{e}thode multipas implicite-explicite pour l'approximation
	des \'{e}quations d'\'{e}volution paraboliques},
	JOURNAL = {Numer. Math.},
	FJOURNAL = {Numerische Mathematik},
	VOLUME = {35},
	YEAR = {1980},
	NUMBER = {3},
	PAGES = {257--276},
	ISSN = {0029-599X},
	MRCLASS = {65M10},
	MRNUMBER = {592157},
	MRREVIEWER = {Erwin Schechter},
	DOI = {10.1007/BF01396412},
	URL = {https://doi.org/10.1007/BF01396412},
}

@article{bolley1978conservation,
  title={Conservation de la positivit{\'e} lors de la discr{\'e}tisation
         des probl{\`e}mes d'{\'e}volution paraboliques},
  author={Bolley, Catherine and Crouzeix, Michel},
  journal={RAIRO. Analyse num{\'e}rique},
  volume={12},
  number={3},
  pages={237--245},
  year={1978},
  publisher={EDP Sciences}
}

@article{ARS1997,
	title = {Implicit-explicit Runge-Kutta methods for time-dependent partial differential equations},
	journal = {Applied Numerical Mathematics},
	volume = {25},
	number = {2},
	pages = {151-167},
	year = {1997},
	note = {Special Issue on Time Integration},
	issn = {0168-9274},
	doi = {https://doi.org/10.1016/S0168-9274(97)00056-1},
	url = {https://www.sciencedirect.com/science/article/pii/S0168927497000561},
	author = {U. M. Ascher and S. J. Ruuth and R. J. Spiteri}
}

@article {SG2015,
	AUTHOR = {Sandu, A. and G\"{u}nther, M.},
	TITLE = {A generalized-structure approach to additive {R}unge-{K}utta
	methods},
	JOURNAL = {SIAM J. Numer. Anal.},
	FJOURNAL = {SIAM Journal on Numerical Analysis},
	VOLUME = {53},
	YEAR = {2015},
	NUMBER = {1},
	PAGES = {17--42},
	ISSN = {0036-1429},
	MRCLASS = {65L06 (65L05 65L07 65L20)},
	MRNUMBER = {3296613},
	MRREVIEWER = {Justin S. C. Prentice},
	DOI = {10.1137/130943224},
	URL = {https://doi.org/10.1137/130943224},
}

@article{IssuesMPRK,
	AUTHOR = {Torlo, Davide and \"{O}ffner, Philipp and Ranocha, Hendrik},
	TITLE = {Issues with positivity-preserving {P}atankar-type schemes},
	JOURNAL = {Appl. Numer. Math.},
	FJOURNAL = {Applied Numerical Mathematics. An IMACS Journal},
	VOLUME = {182},
	YEAR = {2022},
	PAGES = {117--147},
	ISSN = {0168-9274},
	MRCLASS = {65L06 (65L04)},
	MRNUMBER = {4469089},
	DOI = {10.1016/j.apnum.2022.07.014},
	URL = {https://doi.org/10.1016/j.apnum.2022.07.014},
}

@Article{KM18,
	author     = {Kopecz, S. and Meister, A.},
	journal    = {Appl. Numer. Math.},
	title      = {On order conditions for modified {P}atankar-{R}unge-{K}utta schemes},
	year       = {2018},
	issn       = {0168-9274},
	pages      = {159--179},
	volume     = {123},
	fjournal   = {Applied Numerical Mathematics. An IMACS Journal},
	mrclass    = {65L06},
	mrnumber   = {3711996},
	mrreviewer = {D. Shirokoff},
	publisher  = {Elsevier},
	url        = {https://doi.org/10.1016/j.apnum.2017.09.004},
}

@article{IR2023,
	title={Using Bayesian Optimization to Design Time Step Size Controllers with Application to Modified Patankar--Runge--Kutta Methods},
	author={Thomas Izgin and Hendrik Ranocha},
	year={2023},
	journal = {https://arxiv.org/abs/2312.01796},
	eprint={2312.01796},
	archivePrefix={arXiv}
}

@article{ranocha2021preventing,
  title={Preventing pressure oscillations does not fix local linear stability
         issues of entropy-based split-form high-order schemes},
  author={Ranocha, Hendrik and Gassner, Gregor J},
  journal={Communications on Applied Mathematics and Computation},
  year={2021},
  month={08},
  doi={10.1007/s42967-021-00148-z},
  eprint={2009.13139},
  eprinttype={arxiv},
  eprintclass={math.NA}
}

@article{winters2020entropy,
  title={Entropy stable numerical approximations for the isothermal
         and polytropic {E}uler equations},
  author={Winters, Andrew R and Czernik, Christof and Schily, Moritz B
          and Gassner, Gregor J},
  journal={BIT Numerical Mathematics},
  volume={60},
  number={3},
  pages={791--824},
  year={2020},
  publisher={Springer},
  doi={10.1007/s10543-019-00789-w}
}

@article{cano1997error,
  title={Error growth in the numerical integration of periodic orbits,
         with application to {H}amiltonian and reversible systems},
  author={Cano, B and Sanz-Serna, Jesus Maria},
  journal={SIAM Journal on Numerical Analysis},
  volume={34},
  number={4},
  pages={1391--1417},
  year={1997},
  publisher={SIAM},
  doi={10.1137/S0036142995281152}
}

@article{cano1998error,
  title={Error growth in the numerical integration of periodic orbits by
         multistep methods, with application to reversible systems},
  author={Cano, B and Sanz-Serna, Jesus Maria},
  journal={IMA Journal of Numerical Analysis},
  volume={18},
  number={1},
  pages={57--75},
  year={1998},
  publisher={Oxford University Press},
  doi={10.1093/imanum/18.1.57}
}

@article{calvo2011error,
  title={Error growth in the numerical integration of periodic orbits},
  author={Calvo, Manuel and Laburta, MP and Montijano, Juan I and
          R{\'a}ndez, Luis},
  journal={Mathematics and Computers in Simulation},
  volume={81},
  number={12},
  pages={2646--2661},
  year={2011},
  publisher={Elsevier},
  doi={10.1016/j.matcom.2011.05.007}
}

@article{lefloch2002fully,
  title={Fully Discrete, Entropy Conservative Schemes of Arbitrary Order},
  author={LeFloch, Philippe G and Mercier, Jean-Marc and Rohde, Christian},
  journal={SIAM Journal on Numerical Analysis},
  volume={40},
  number={5},
  pages={1968--1992},
  year={2002},
  publisher={Society for Industrial and Applied Mathematics},
  doi={10.1137/S003614290240069X}
}

@article{fisher2013discretely,
  title={Discretely conservative finite-difference formulations for nonlinear
         conservation laws in split form: {T}heory and boundary conditions},
  author={Fisher, Travis C and Carpenter, Mark H and Nordstr{\"o}m, Jan and
          Yamaleev, Nail K and Swanson, Charles},
  journal={Journal of Computational Physics},
  volume={234},
  pages={353--375},
  year={2013},
  publisher={Elsevier},
  doi={10.1016/j.jcp.2012.09.026}
}

@article{ranocha2018comparison,
  title={Comparison of Some Entropy Conservative Numerical Fluxes for
         the {E}uler Equations},
  author={Ranocha, Hendrik},
  journal={Journal of Scientific Computing},
  volume={76},
  number={1},
  pages={216--242},
  year={2018},
  month={07},
  publisher={Springer},
  doi={10.1007/s10915-017-0618-1},
  eprint={1701.02264},
  eprinttype={arxiv},
  eprintclass={math.NA}
}

@article{chen2017entropy,
  title={Entropy stable high order discontinuous {G}alerkin methods with suitable
         quadrature rules for hyperbolic conservation laws},
  author={Chen, Tianheng and Shu, Chi-Wang},
  journal={Journal of Computational Physics},
  volume={345},
  pages={427--461},
  year={2017},
  publisher={Elsevier},
  doi={10.1016/j.jcp.2017.05.025}
}

@article{tadmor1987numerical,
  title={The numerical viscosity of entropy stable schemes for systems
         of conservation laws. {I}},
  author={Tadmor, Eitan},
  journal={Mathematics of Computation},
  volume={49},
  number={179},
  pages={91--103},
  year={1987},
  publisher={American Mathematical Society},
  doi={10.1090/S0025-5718-1987-0890255-3}
}

@article{nusslein2021positivity,
  title={Positivity-Preserving Adaptive {R}unge-{K}utta Methods},
  author={N\"u{\ss}lein, Stephan and Ranocha, Hendrik and Ketcheson, David I},
  journal={Communications in Applied Mathematics and Computational Science},
  year={2021},
  month={11},
  doi={10.2140/camcos.2021.16.155},
  volume={16},
  number={2},
  pages={155--179},
  eprint={2005.06268},
  eprinttype={arxiv},
  eprintclass={math.NA}
}

@article{horvath_positivity_1998,
  title={Positivity of {R}unge--{K}utta and diagonally split
         {R}unge--{K}utta methods},
  author={Horv{\'a}th, Zolt{\'a}n},
  journal={Applied Numerical Mathematics},
  volume={28},
  number={2-4},
  pages={309--326},
  year={1998},
  publisher={Elsevier},
  doi={10.1016/S0168-9274(98)00050-6}
}

@article{macdonald2007,
	Author = {Macdonald, Colin B. and Gottlieb, Sigal and Ruuth, Steven J.},
	Journal = {Journal of Scientific Computing},
	Month = dec,
	Pages = {89--112},
	Title = {{A Numerical Study of Diagonally Split Runge--Kutta Methods for PDEs with Discontinuities}},
	Volume = {35},
	Year = {2008}
}

@incollection{tadmor2002semidiscrete,
  title={From Semidiscrete to Fully Discrete: {S}tability of
         {R}unge-{K}utta Schemes by the Energy Method {II}},
  author={Tadmor, Eitan},
  booktitle={Collected Lectures on the Preservation of Stability under Discretization},
  editor={Estep, Donald J and Tavener, Simon},
  series={Proceedings in Applied Mathematics},
  volume={109},
  pages={25--49},
  year={2002},
  publisher={Society for Industrial and Applied Mathematics},
  address={Philadelphia}
}

@article{ranocha2018L2stability,
  title={{$L_2$} Stability of Explicit {R}unge-{K}utta Schemes},
  author={Ranocha, Hendrik and {\"O}ffner, Philipp},
  journal={Journal of Scientific Computing},
  volume={75},
  number={2},
  pages={1040--1056},
  year={2018},
  month={05},
  doi={10.1007/s10915-017-0595-4}
}

@article{sun2017stability,
  title={Stability of the fourth order {R}unge-{K}utta method for
         time-dependent partial differential equations},
  author={Sun, Zheng and Shu, Chi-Wang},
  journal={Annals of Mathematical Sciences and Applications},
  volume={2},
  number={2},
  pages={255--284},
  year={2017},
  doi={10.4310/AMSA.2017.v2.n2.a3}
}

@article{sun2019strong,
  title={Strong Stability of Explicit {R}unge-{K}utta Time
         Discretizations},
  author={Sun, Zheng and Shu, Chi-Wang},
  journal={SIAM Journal on Numerical Analysis},
  volume={57},
  number={3},
  pages={1158--1182},
  year={2019},
  publisher={SIAM},
  doi={10.1137/18M122892X},
  eprint={1811.10680},
  eprinttype={arxiv},
  eprintclass={math.NA}
}

@inproceedings{achleitner2024necessary,
  title={Necessary and Sufficient Conditions for Strong Stability of
         Explicit {R}unge-{K}utta Methods},
  author={Achleitner, Franz and Arnold, Anton and J{\"u}ngel, Ansgar},
  pages={1--21},
  year={2024},
  booktitle={From Particle Systems to Partial Differential Equations.
             {PSPDE X}, Braga, Portugal, June 2022},
  editor={Carlen, Eric and Gonçalves, Patrícia and Soares, Ana Jacinta},
  series={Springer Proceedings in Mathematics \& Statistics},
  volume={465},
  publisher={Springer},
  address={Cham},
  doi={10.1007/978-3-031-65195-3_1}
}

@article{tadmor2025stability,
  title={On the stability of {R}unge-{K}utta methods for arbitrarily large
         systems of {ODEs}},
  author={Tadmor, Eitan},
  journal={Communications on Pure and Applied Mathematics},
  volume={78},
  number={4},
  pages={821--855},
  year={2025},
  publisher={Wiley Online Library},
  doi={10.1002/cpa.22238}
}

@article{sun2022energy,
  title={On energy laws and stability of {R}unge-{K}utta methods for linear
         seminegative problems},
  author={Sun, Zheng and Wei, Yuanzhe and Wu, Kailiang},
  journal={SIAM Journal on Numerical Analysis},
  volume={60},
  number={5},
  pages={2448--2481},
  year={2022},
  doi={10.1137/22M1472218},
  eprint={2201.06501},
  eprinttype={arxiv},
  eprintclass={math.NA}
}

@article{friedrich2019entropy,
  title={Entropy Stable Space-Time Discontinuous {G}alerkin Schemes with
         Summation-by-Parts Property for Hyperbolic Conservation Laws},
  author={Friedrich, Lucas and Schn{\"u}cke, Gero and Winters, Andrew Ross
          and Fern{\'a}ndez, David C Del Rey and Gassner, Gregor Josef
          and Carpenter, Mark H},
  journal={Journal of Scientific Computing},
  volume={80},
  number={1},
  pages={175--222},
  year={2019},
  publisher={Springer},
  doi={10.1007/s10915-019-00933-2},
  eprint={1808.08218},
  eprinttype={arxiv},
  eprintclass={math.NA}
}

@article{burrage1979stability,
  title={Stability criteria for implicit {R}unge-{K}utta methods},
  author={Burrage, Kevin and Butcher, John Charles},
  journal={SIAM Journal on Numerical Analysis},
  volume={16},
  number={1},
  pages={46--57},
  year={1979},
  publisher={SIAM},
  doi={10.1137/0716004}
}

@article{burrage1980nonlinear,
  title={Non-linear stability of a general class of differential
         equation methods},
  author={Burrage, Kevin and Butcher, John Charles},
  journal={BIT Numerical Mathematics},
  volume={20},
  number={2},
  pages={185--203},
  year={1980},
  publisher={Springer},
  doi={10.1007/BF01933191}
}

@article{higueras2005monotonicity,
  title={Monotonicity for {R}unge-{K}utta Methods: Inner Product
         Norms},
  author={Higueras, Inmaculada},
  journal={Journal of Scientific Computing},
  volume={24},
  number={1},
  pages={97--117},
  year={2005},
  publisher={Springer},
  doi={10.1007/s10915-004-4789-1}
}

@article{ranocha2021strong,
  title={On Strong Stability of Explicit {R}unge-{K}utta Methods for
         Nonlinear Semibounded Operators},
  author={Ranocha, Hendrik},
  journal={IMA Journal of Numerical Analysis},
  year={2021},
  month={01},
  volume={41},
  number={1},
  pages={654--682},
  publisher={Oxford University Press},
  doi={10.1093/imanum/drz070},
  eprint={1811.11601},
  eprinttype={arxiv},
  eprintclass={math.NA}
}

@article{ranocha2020energy,
  title={Energy Stability of Explicit {R}unge-{K}utta Methods for
         Nonautonomous or Nonlinear Problems},
  author={Ranocha, Hendrik and Ketcheson, David I},
  journal={SIAM Journal on Numerical Analysis},
  year={2020},
  month={11},
  volume={58},
  number={6},
  pages={3382--3405},
  publisher={Society for Industrial and Applied Mathematics},
  doi={10.1137/19M1290346},
  eprint={1909.13215},
  eprinttype={arxiv},
  eprintclass={math.NA}
}

@article{grimm2005geometric,
  title={Geometric integration methods that preserve {L}yapunov
         functions},
  author={Grimm, V and Quispel, GRW},
  journal={BIT Numerical Mathematics},
  volume={45},
  number={4},
  pages={709--723},
  year={2005},
  publisher={Springer},
  doi={10.1007/s10543-005-0034-z}
}

@article{calvo2006preservation,
  title={On the Preservation of Invariants by Explicit {R}unge-{K}utta
         Methods},
  author={Calvo, Manuel and Hern{\'a}ndez-Abreu, D and Montijano, Juan I
          and R{\'a}ndez, Luis},
  journal={SIAM Journal on Scientific Computing},
  volume={28},
  number={3},
  pages={868--885},
  year={2006},
  publisher={SIAM},
  doi={10.1137/04061979X}
}

@article{calvo2010projection,
  title={Projection methods preserving {L}yapunov functions},
  author={Calvo, Manuel and Laburta, MP and Montijano, Juan I and
          R{\'a}ndez, Luis},
  journal={BIT Numerical Mathematics},
  volume={50},
  number={2},
  pages={223--241},
  year={2010},
  publisher={Springer},
  doi={10.1007/s10543-010-0259-3}
}

@article{laburta2015numerical,
  title={Numerical methods for non conservative perturbations of
         conservative problems},
  author={Laburta, MP and Montijano, Juan I and R{\'a}ndez, Luis
          and Calvo, Manuel},
  journal={Computer Physics Communications},
  volume={187},
  pages={72--82},
  year={2015},
  publisher={Elsevier},
  doi={10.1016/j.cpc.2014.10.012}
}

@article{dahlby2011preserving,
  title={Preserving multiple first integrals by discrete gradients},
  author={Dahlby, Morten and Owren, Brynjulf and Yaguchi, Takaharu},
  journal={Journal of Physics A: Mathematical and Theoretical},
  volume={44},
  number={30},
  pages={305205},
  year={2011},
  publisher={IOP Publishing},
  doi={10.1088/1751-8113/44/30/305205}
}

@article{sanzserna1982explicit,
  title={An explicit finite-difference scheme with exact conservation
         properties},
  author={Sanz-Serna, Jesus Maria},
  journal={Journal of Computational Physics},
  volume={47},
  number={2},
  pages={199--210},
  year={1982},
  publisher={Elsevier},
  doi={10.1016/0021-9991(82)90074-2}
}

@book{dekker1984stability,
  title={Stability of {R}unge-{K}utta methods for stiff nonlinear
         differential equations},
  author={Dekker, Kees and Verwer, Jan G},
  series={CWI Monographs},
  volume={2},
  year={1984},
  publisher={North-Holland},
  address={Amsterdam}
}

@misc{IRS2026repository,
  title={Reproducibility repository for
         "A Positivity-Preserving Relaxation Algorithm"},
  author={Izgin, Thomas and Ranocha, Hendrik and Shu, Chi-Wang},
  year={2026},
  howpublished={\url{https://github.com/IzginThomas/PositiveRelaxation}},
  doi={10.5281/zenodo.19386973}
}

		\end{document}